\documentclass{amsart}
\usepackage{amssymb}
\usepackage{latexsym}
\usepackage{euscript}
\usepackage{graphics}



\usepackage[usenames,dvipsnames,svgnames,table]{xcolor}

\def\sfR{{\sf R}}

\def\sfS{{\text{\rm{\textsf S}}}}

\def\bK{{\mathbf K}}

\def\sD{{\mathfrak D}}      
   \def\sH{{\mathfrak H}}   
      \def\sL{{\mathfrak L}}
\def\sM{{\mathfrak M}}   \def\sN{{\mathfrak N}}   
      
   \def\sT{{\mathfrak T}}   
\def\sV{{\mathfrak V}}      
   
\def\gh{{\mathfrak h}}

      \def\dC{{\mathbb C}}
\def\dD{{\mathbb D}}

   \def\dT{{\mathbb T}}

   \def\cB{{\mathcal B}}   
\def\cD{{\mathcal D}}      
   \def\cH{{\mathcal H}}   
\def\cJ{{\mathcal J}}      
   \def\cN{{\mathcal N}}   \def\cO{{\mathcal O}}
      
\def\cS{{\mathcal S}}      \def\cU{{\mathcal U}}

\def\gh{{\mathfrak h}}

\def\wt#1{{{\widetilde #1} }}
\def\h#1{{{\hat #1} }}
\def\wh#1{{{\widehat #1} }}

\def\bm\chi{\mbox{\boldmath$\chi$}}
\def\gr{{\rm gr\,}}

\def\ker{{\rm ker\,}}
\def\ran{{\rm ran\,}}
\def\cran{{\rm \overline{ran}\,}}
\def\dom{{\rm dom\,}}
\def\mul{{\rm mul\,}}

\def\cdom{{\rm \overline{dom}\,}}

\def\dim{{\rm dim\,}}

\def\card{{\rm card\,}}
\let\xker=\ker \def\ker{{\xker\,}}
\def\spn{{\rm span\,}}

\def\ind{{\rm ind\,}}

\DeclareMathOperator{\hplus}{\, \widehat + \,}

\newcommand {\sk}[3]{\left#1#2\right#3}  
\renewcommand {\l}{\lambda}
\renewcommand {\k}{\kappa}
\newcommand {\s}{\sigma}

\newcommand {\e}{\varepsilon}
\newcommand {\T}{\Theta}
\newcommand {\g}{\gamma}

\newcommand {\ov}{\overline}
\newcommand {\m}{\mu}
\newcommand {\G}{\Gamma}
\renewcommand {\r}{\rho}

\newcommand {\zx}{{[*]}}
\newcommand {\sx}{{-[*]}}

\newtheorem{theorem}{Theorem}[section]
\newtheorem{proposition}[theorem]{Proposition}
\newtheorem{corollary}[theorem]{Corollary}
\newtheorem{lemma}[theorem]{Lemma}

\theoremstyle{definition}
\newtheorem{example}[theorem]{Example}
\newtheorem{remark}[theorem]{Remark}
\newtheorem{definition}[theorem]{Definition}

\numberwithin{equation}{section}

\usepackage[usenames,dvipsnames,svgnames,table]{xcolor}

\begin{document}

\title[Unitary boundary pairs for  isometric operators ]
{Unitary boundary pairs for  isometric operators in  Pontryagin spaces and generalized coresolvents}
\author{D.~Baidiuk}
\author{V.~Derkach}
\author{S.~Hassi}

\address{Department of Mathematics \\
Tampere University \\
Tampere \\
Finland} \email{baydyuk@gmail.com}

\address{Department of Mathematics and Statistics \\
University of Vaasa \\
Finland} \email{sha@uwasa.fi}

\address{
Department of Mathematics and Natural Sciences, TU Ilmenau, 
Germany }

\address{ Department of Mathematics\\
Vasyl Stus Donetsk National University\\
 Vinnytsya,  Ukraine} \email{derkach.v@gmail.com}

\dedicatory{Dedicated to our friend and colleague Henk de Snoo on the occasion of his
75th birthday}

\thanks{
The research of V.D. was supported by Ministry of Education and Science of Ukraine (projects \# 0118U003138, 0118U002060)
and by the German Research Foundation (DFG), grant TR 903/22-1. 
V.D. also thanks for hospitality the University of Vaasa where part of this study was conducted.
The research of D.B. was funded by the Academy of Finland grant number 310489.\\
\textbf{Data Availability Statement.}
Data sharing not applicable to this article as no datasets were generated or analysed during the current study.
}

\subjclass[2010]{47A20, 47A56, 47B50, 46C20}

\keywords{Pontryagin space,  isometric
operator, boundary triple, boundary pair, Weyl function, characteristic function, generalized coresolvent.}

\begin{abstract}
An isometric operator $V$ in a Pontryagin space ${\sH}$ is called
standard, if its domain and the range are nondegenerate subspaces in
${\sH}$. A description of coresolvents for standard isometric operators is known
and basic underlying concepts that appear in the literature are unitary colligations
and characteristic functions.
In the present paper generalized coresolvents of non-standard Pontryagin space
isometric operators are described.
The methods used in this paper rely on a new general notion of boundary pairs
introduced for isometric operators in a Pontryagin space setting.
Even in the Hilbert space case this notion generalizes the earlier concept of
boundary triples for isometric operators and offers an alternative approach to
study operator valued Schur functions without any additional invertibility requirements
appearing in the ordinary boundary triple approach.
\end{abstract}

\maketitle


\section{Introduction}
Extension theory for standard symmetric and isometric operators in  Pontryagin spaces was first developed by I.S. Iokhvidov and M.G. Kre\u{\i}n in \cite{IK65}, generalized
resolvents of such operators were described by  M.G. Kre\u{\i}n and H. Langer in \cite{KL71,KL72,L71}. Following~\cite{DLS90} we will use the notion {\it standard} for an isometric operator $V$ in a Pontryagin space ${\sH}$, if its domain $\dom V$ and the
range $\ran V$ are nondegenerate subspaces in ${\sH}$. In this case every unitary extension of $V$ can be obtained in pretty much the same way as in the case of Hilbert space isometric operator.
Similarly, the extension theory and the theory of generalized coresolvents of standard isometric operators in Kre\u{\i}n spaces was built by A. Dijksma, H. Langer and H. de Snoo in~\cite{DLS90}. For a nonstandard isometric operator in a Pontryagin space, description of its regular (resp. nonregular) generalized coresolvents in Pontryagin spaces without growth (resp. with growth) of negative index  was given by P. Sorjonen~\cite{Sor85} (resp. by O. Nitz \cite{N001}, \cite{N002}). However, the proof in~\cite{N002}  is not so convincing, as it becomes quite complicated and contains some gaps.

Another approach to the extension theory of symmetric operators in Hilbert spaces is based on the concept of \textit{abstract boundary value} introduced by J. Calkin~\cite{Cal39} and later formalized in the notion of  boundary value space  in~\cite{Koc75}, \cite{GG84} (or ordinary {\it boundary triple} in~\cite{DM95}). In~\cite{DM87} with each boundary triple there was associated an analytic object -- {\it abstract Weyl function} which allows to carry out spectral analysis of  extensions of symmetric operators. In the case of a Hilbert space isometric operator (and more generally for a dual pair of operators) the notions of a boundary triple and a corresponding Weyl function were introduced in~\cite{MM03,MM04}. These notions, when generalized to the indefinite case in~\cite{B13}, proved to be an adequate language in the extension theory of nonstandard isometric operator $V$ in a Pontryagin space, since they allowed to give full description of generalized coresolvents of $V$.
However, the method proposed in~\cite{B13} is restricted to the case of regular generalized coresolvents, which have minimal realizations in Pontryagin spaces $\wt \sH$ with the same negative index as $\sH$, and does not work for generalized coresolvents of $V$ which have minimal realizations in Pontryagin spaces $\wt \sH$ with bigger negative indices.

This difficulty can be prevented by using an appropriate notion of \textit{boundary pairs}, which extend the concept of ordinary boundary triples.
In the case of symmetric operators in Hilbert spaces an extension of ordinary boundary triples, a so-called \textit{generalized  boundary triple}, was introduced and studied in~\cite{DM95}. This notion was further generalized in~\cite{DHMS06} to the notion of a \textit{unitary boundary pair} (called therein as a \textit{boundary relation}), which can be applied to study generalized resolvents of symmetric operators \cite{DHMS09,DHMS12} and various general classes of boundary value problems for ordinary and partial differential operators, see~\cite{DHMS12,DHM17,DHM2020a,DHM2020b}.
In particular, in~\cite{DHMS06} it was shown that every Nevanlinna pair (or Nevanlinna family of holomorphic relations) can be realized as the Weyl family of some unitary boundary pair, and in~\cite{DHMS1,DHMS09} this notion was used to get a new proof of M.G. Kre\u{\i}n formula for generalized resolvents of symmetric operators via the coupling method developed therein.
In~\cite{BDHS11} the notion of unitary boundary pair was introduced for symmetric operators in Pontryagin spaces and it was shown that every generalized Nevanlinna pair, allowing a finite negative index for the associated Nevanlinna kernel, can be realized as the Weyl family of such
a unitary boundary pair.

In this paper a new notion of a unitary boundary pair with an associated Weyl function is introduced and studied in the setting of isometric operators in Pontryagin spaces. In particular, it is shown in Section \ref{sec3} how a certain subclass of unitary boundary pairs is connected to unitary colligations (see~\cite{ADRS97}) and, moreover, that the Weyl functions associated with that subclass of unitary boundary pairs actually coincide with characteristic functions of the corresponding unitary colligations; see Theorems \ref{T:UV},~\ref{T:UV2}. Furthermore, using some transformations results, being motivated by~\cite{ADRS97}, it is also shown that every operator valued generalized Schur function (not necessarily holomorphic at the origin) can still be realized as the Weyl function of some unitary boundary pair for an isometric operator $V$ in a Pontryagin space; see Theorems~\ref{thm:General},~\ref{thm:Inv}. These two theorems show that the present notion of a unitary boundary pair for isometric operators in a Pontryagin (as well as in the classical Hilbert) space setting is a natural object to realize and study generalized (or standard) Schur functions as their Weyl functions. In particular, these new notions complement and extend the approach, which relies on characteristic functions of unitary colligations being associated with the special case stated in Theorem \ref{T:UV2}.

After these characteristic results on unitary boundary pairs and their Weyl function for isometric operators we study in Section \ref{sec4} some spectral properties of proper extensions of $V$ and find a formula for their canonical coresolvents, see Theorem \ref{res1}, and then with these preparations prove an analog of M.G. Kre\u{\i}n formula for the generalized coresolvents of the isometric operator $V$. This latter problem is solved via the coupling method, where we consider a coupled unitary boundary pair as a direct sum of an ordinary boundary triple and a unitary boundary pair and then derive the formula for generalized coresolvents from the formula for canonical coresolvents associated with the coupled boundary pair.

\section{Preliminaries}
\label{sec1}
\subsection{Indefinite inner product spaces}
A linear space $\sH$ endowed with an inner product $[\cdot,\cdot]_{\sH}$ is called an inner product space, see~\cite{Bognar}, \cite{AI86}.
A vector $f\in\sH$ is called positive (resp. negative or neutral), if $[f,f]_{\sH}>0$  (resp. $[f,f]_{\sH}<0$ or  $[f,f]_{\sH}=0$).
A subspace ${\sL}\subset\sH$ is called positive  (resp. negative   or neutral), if every vector $f\in\sL\setminus\{0\}$ is positive (resp. negative or neutral). The orthogonal complement of a subspace ${\sL}$ is denoted by ${\sL}^{[\perp]}$.

An  inner product space $\sH$ is called a \textit{Kre\u{\i}n space}, if it admits a fundamental decomposition
\begin{equation}\label{eq:FundDecom}
    \sH=\sH_+[+]\sH_-
\end{equation}
as an orthogonal sum of a positive  subspace $\sH_+$ and a negative  subspace $\sH_-$. The operator $J=P_+-P_-$, where $P_{\pm}$ are orthogonal projections in $\sH$ onto $\sH_\pm$, is called {\it the fundamental symmetry} of $\sH$. We will use the notation $(\sH,J)$ for the Kre\u{\i}n space $\sH$ with the fundamental symmetry $J$. A Kre\u{\i}n space $(\sH,J)$ with a finite negative index $\kappa_-(\sH):=\dim\sH_-$ is called a {\it Pontryagin space}.

Every closed subspace ${\sD}$ of a Pontryagin space $(\sH,J)$ admits
the following decomposition
\begin{equation}\label{decoclosed}
 {\sD}={\sD}_0\dot{+} {\sD}_+ \dot{+} {\sD}_-,
\end{equation}
where ${\sD}_0={\sD}\cap {\sD}^{[\perp]}$ is a neutral subspace (the
isotropic part of ${\sD}$) and ${\sD}_+$ and ${\sD}_-$ are closed
(uniformly) positive and negative subspaces of $(\sH,J)$; see e.g.
\cite[Theorems~IX.2.5]{Bognar}. We will need the following slightly
modified version of this statement.

\begin{lemma}\label{lem:2.1}
Every linear subspace $\sT$ of a Pontryagin space $(\sH,J)$ admits
the following decomposition
\begin{equation}\label{Tdom0}
 \sT=\sT_+ \dot{+} \sT_1,
\end{equation}
where  $\sT_+$ is a positive  subspace of $(\sH,J)$, such
that $\overline{\sT_+}$ is a maximal positive  subspace of
$\overline{\sT}$, and ${\sT_1}$ is a $k$-dimensional subspace of
$(\sH,J)$, where $k=\dim  \overline{\sT}/ \overline{\sT_+}$.
\end{lemma}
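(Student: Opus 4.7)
The plan is to apply the closed-subspace decomposition \eqref{decoclosed} to the closure $\overline{\sT}$, obtaining
$$
\overline{\sT} = \sD_0 \dot{+} \sD_+ \dot{+} \sD_-,
$$
with $\sD_0$ the isotropic part, $\sD_+$ uniformly positive, $\sD_-$ uniformly negative, and the three summands chosen mutually orthogonal. Since $(\sH,J)$ is Pontryagin, $\sM:=\sD_0\dot{+}\sD_-$ has finite dimension $k$, and the skew projection $\pi_M\colon\overline{\sT}\to\sM$ along $\sD_+$ is bounded. A short orthogonality argument shows that $\sD_+$ is maximal positive in $\overline{\sT}$: if $\sD_+\subsetneq\sL\subseteq\overline{\sT}$ with $\sL$ positive, pick $f\in\sL\setminus\sD_+$ and write $f=f_++f_M$ with $f_+\in\sD_+\subseteq\sL$ and $0\ne f_M\in\sM$; then $f_M=f-f_+\in\sL$ with $[f_M,f_M]\le 0$, contradicting positivity of $\sL$.

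Having identified $\sD_+$ as a maximal positive subspace, the natural candidate is $\sT_+:=\sT\cap\sD_+$, which is automatically positive. The direct-sum structure is then built from the projection $\pi_M$. Since $\sT$ is dense in $\overline{\sT}$ and $\sM$ is finite-dimensional (hence closed), $\pi_M(\sT)=\sM$; pick $v_1,\dots,v_k\in\sT$ whose $\pi_M$-images form a basis of $\sM$ and set $\sT_1:=\spn\{v_1,\dots,v_k\}$, so that $\dim\sT_1=k$. For any $f\in\sT$, subtracting the unique linear combination $\sum_i c_i v_i$ for which $\pi_M(f)=\sum_i c_i\pi_M(v_i)$ places $f-\sum_i c_i v_i$ in $\sT\cap\ker\pi_M=\sT_+$, and the same uniqueness gives $\sT_+\cap\sT_1=\{0\}$. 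This yields the decomposition $\sT=\sT_+\dot{+}\sT_1$.

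The main obstacle, and what the above construction is designed for, is the density claim $\overline{\sT_+}=\sD_+$, which is not obvious a priori since cutting $\sT$ down by the constraint $\pi_M(\cdot)=0$ could in principle leave only a thin subspace. To prove it, take $f\in\sD_+$ and $f_n\in\sT$ with $f_n\to f$; continuity of $\pi_M$ gives $\pi_M(f_n)\to\pi_M(f)=0$, and in the chosen basis of $\sM$ the coefficients $c_{i,n}$ of $\pi_M(f_n)=\sum_i c_{i,n}\pi_M(v_i)$ tend to zero, so $g_n:=f_n-\sum_i c_{i,n}v_i$ lies in $\sT_+$ and satisfies $g_n\to f$. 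Therefore $\overline{\sT_+}=\sD_+$ is maximal positive in $\overline{\sT}$, and $\dim\overline{\sT}/\overline{\sT_+}=\dim\sM=k$, completing the proof.
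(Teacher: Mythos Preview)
Your proof is correct and follows essentially the same approach as the paper: decompose $\overline{\sT}$ via \eqref{decoclosed}, set $\sT_+=\sT\cap\sD_+$, and choose a $k$-dimensional complement $\sT_1\subset\sT$. The only notable difference is that you prove the density $\overline{\sT_+}=\sD_+$ directly via the bounded skew projection $\pi_M$, whereas the paper invokes \cite[Lemma~2.1]{GK} for this step; your version is thus slightly more self-contained.
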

\begin{proof}
Let ${\sD}$ be the closure of $\sT$ in $\sH$ and decompose $\sD$ as
in \eqref{decoclosed},
\[
 {\sD}={\sD}_0\dot{+} {\sD}_- \dot{+} {\sD}_+,
\]
where ${\sD}_0={\sD}\cap {\sD}^{[\perp]}$, ${\sD}_-$, and ${\sD}_+$
are closed neutral, negative, and positive
subspaces of $(\sH,J)$, respectively. Since $\sT$ is a dense subset
of ${\sD}$ and the subspaces ${\sD}_0$ and ${\sD}_-$ are finite
dimensional, $\sT$ has a dense intersection with ${\sD}_+$,
\begin{equation}\label{Tdom1}
 \overline{\sT \cap {\sD}_+}={\sD}_+;
\end{equation}
see e.g. \cite[Lemma~2.1]{GK}. Denote ${\sT}_+:=\sT \cap {\sD}_+$
and let $k=\dim({\sD}_0\dot{+} {\sD}_-)$. Since $\overline{\sT}=\sD$
one concludes that there exists a $k$-dimensional subspace
${\sT}_1\subset \sT \setminus \sD_+$. The closed subspace
${\sT}_1\subset \sT$ decomposes $\sT$ and \eqref{Tdom1} together
with a dimension argument leads to
\[
 \overline{{\sT}_+ \dot{+} {\sT}_1}=  \overline{{\sT}_+} \dot{+}{\sT}_1=
 \sD_+ \dot{+} {\sT}_1=\sD=\overline{\sT}.
\]
The equality $\overline{\sT}={\sT}_1 \dot{+} \overline{{\sT}_+}$
combined with \eqref{Tdom1} yields the decomposition \eqref{Tdom0}
for $\sT$.
\end{proof}

\subsection{Linear relations in Kre\u{\i}n spaces}

Let $(\sH_1,J_{\sH_1})$ and $(\sH_2,J_{\sH_2})$ be two  Kre\u{\i}n spaces.
A linear relation $T$ from $\sH_1$ to $\sH_2$ is a linear subspace
of $\sH_1 \times \sH_2$, see e.g. \cite{Arens}.
Often a linear operator $T:\sH_1\to\sH_2$ will be identified with its graph
\[
\textup{gr }T:=\{\,\{f,Tf\}:\,f\in \dom T\,\}.
\]

For a linear relation $T$ from $\sH_1$ to $\sH_2$  the symbols $\dom T$, $\ker T$, $\ran T$, and $\mul T$ stand for the domain, kernel, range,
and multivalued part, respectively. The inverse $T^{-1}$ is a
relation from $\sH_2$ to $\sH_1$ defined by
$\{\,\{f',f\}:\,\{f,f'\}\in T\,\}$.
Denote by $\rho(T)$ the resolvent set of $T$,
 by $\sigma(T)$ the spectrum  of $T$  and by $\sigma_p(T)$  (resp. $\sigma_c(T)$, $\sigma_r(T)$) the point  (resp. continuous, residual)   spectrum of $T$.
The adjoint $T^{[*]}$ is the closed linear relation from $\sH_2$ to
$\sH_1$ defined by (see~\cite{Ben72})
\begin{equation}
\label{adjo}
 T^{[*]}=\{\,\{h,k\} \in \sH_2 \times \sH_1 :\,
     [k,f]_{\sH_1}=[h,g]_{\sH_2}, \, \{f,g\}\in T \,\}.
\end{equation}
The following equalities are obvious from~\eqref{adjo}
\begin{equation}
\label{adjequ}
 (\dom T)^{[\perp]}=\mul T^{[*]}, \quad
 (\ran T)^{[\perp]}=\ker T^{[*]}.
\end{equation}

A linear relation $T$ from $\sH_1$ to $\sH_2$ is called
\textit{isometric} (resp. \textit{contractive} or \textit{expanding}), if for every
$\{f,g\}\in T$ one has
 \begin{equation}\label{eq:isom}
    [g,g]_{\sH_2}=[f,f]_{\sH_1} \quad\left(\textup{ resp. }\, [g,g]_{\sH_2}\le[f,f]_{\sH_1}\textup{ or }\,
    [g,g]_{\sH_2}\ge[f,f]_{\sH_1}\right).
 \end{equation}
It follows from~\eqref{adjo} and~\eqref{eq:isom} that $T$ is
isometric, if $T^{-1}\subseteq T^{[*]}$. A linear relation $T$ from
$\sH_1$ to $\sH_2$ is called  \textit{unitary}, if $T^{-1}=
T^{[*]}$, \cite{Sh76}. Moreover, $T$ is said to be a \textit{standard
unitary operator} if $\dom T=\sH_1$ and $\ran T=\sH_2$. For an
isometric linear relation $T$ one obtains from $T^{-1}\subseteq
T^{[*]}$ and the identities \eqref{adjequ} that
\begin{equation}
\label{isome}
 \ker T \subseteq  (\dom T)^{[\perp]},
 \quad
 \mul T \subseteq  (\ran T)^{[\perp]}.
\end{equation}

For a unitary linear relation the following statements hold, see~\cite[Theorem 2]{Sh76}.
\begin{proposition}
\label{UNIT}
Let $T$ be a unitary relation from the Kre\u{\i}n space
$(\sH_1,J_{\sH_1})$ to the Kre\u{\i}n space $(\sH_2,J_{\sH_2})$.
Then:
\begin{enumerate}
\def\labelenumi{\rm (\roman{enumi})}
\item $\dom T$ is closed if and only if $\ran T$ is closed;
\item the following equalities hold:
\begin{equation}
\label{eee}
\ker T=(\dom T)^{[\perp]},\quad \mul T=(\ran T)^{[\perp]}.
\end{equation}
\end{enumerate}
\end{proposition}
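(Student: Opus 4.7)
The plan I would pursue is as follows. First, I would record two preliminary observations: (a) $T^{[*]}$ is always closed (being defined by the closed linear conditions in~\eqref{adjo}), so the defining identity $T^{-1} = T^{[*]}$ forces $T$ itself to be closed; and (b) $T^{-1}$ is again unitary, since $(T^{-1})^{[*]} = (T^{[*]})^{-1} = T$. The resulting $T \leftrightarrow T^{-1}$ symmetry interchanges $\dom$ with $\ran$ and $\ker$ with $\mul$, so only one of the two implications in each of~(i) and~(ii) requires a proof.

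Part~(ii) is then immediate: applying~\eqref{adjequ} to $T$ and substituting $T^{[*]} = T^{-1}$ yields
\[
 (\dom T)^{[\perp]} = \mul T^{[*]} = \mul T^{-1} = \ker T,
 \qquad
 (\ran T)^{[\perp]} = \ker T^{[*]} = \ker T^{-1} = \mul T.
\]

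For part~(i), by the $T \leftrightarrow T^{-1}$ symmetry it suffices to prove that $\dom T$ being closed implies $\ran T$ being closed. The strategy is to realize $T$ as a hyper-maximal neutral subspace of the product Kre\u{\i}n space $\sK := (\sH_1 \times \sH_2, [\cdot,\cdot]_\sK)$ equipped with the inner product $[(f,g),(f',g')]_\sK := [f,f']_{\sH_1} - [g,g']_{\sH_2}$. A direct computation from~\eqref{adjo} identifies the unitarity $T^{[*]} = T^{-1}$ with the hyper-maximal neutrality $T = T^{[\perp]_\sK}$. Combining fundamental symmetries of $\sH_1$ and $\sH_2$ produces a fundamental decomposition $\sK = \sK_+ [+] \sK_-$ with $\sK_+ = \sH_{1,+} \oplus \sH_{2,-}$ and $\sK_- = \sH_{1,-} \oplus \sH_{2,+}$, and the standard angular representation of hyper-maximal neutral subspaces then writes $T = \{x_+ + V x_+ : x_+ \in \sK_+\}$ for some Hilbert-space unitary $V : \sK_+ \to \sK_-$ with $2 \times 2$ block form
\[
 V = \begin{pmatrix} A & B \\ C & D \end{pmatrix}.
\]

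A routine verification then shows that $\dom T$ is closed if and only if $\ran B$ is closed, and $\ran T$ is closed if and only if $\ran C$ is closed. The unitarity of $V$ gives the identities $B B^* = I - A A^*$ and $C^* C = I - A^* A$, and the classical spectral fact $\sigma(A^*A) \setminus \{0\} = \sigma(A A^*) \setminus \{0\}$ then ensures that $1$ is an accumulation point of $\sigma(A A^*)$ if and only if it is of $\sigma(A^* A)$, so $\ran B$ is closed if and only if $\ran C$ is closed. The main obstacle I anticipate is the bookkeeping in the block reduction, in particular matching the closedness of the two off-diagonal blocks to the correct projection ranges.
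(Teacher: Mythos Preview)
The paper does not give its own proof of this proposition; it is stated with a citation to Shmul'yan \cite[Theorem~2]{Sh76}. So there is nothing to compare against directly.

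Your argument is correct. Part~(ii) is indeed the one-line computation you wrote, using \eqref{adjequ} and the defining identity $T^{[*]}=T^{-1}$; this is the standard proof. For part~(i), the identification of $T$ with a hyper-maximal neutral subspace of the product Kre\u{\i}n space $(\sH_1\times\sH_2,[\cdot,\cdot]_\sK)$ is valid, and the angular-operator representation $T=\{x_+ + Vx_+\}$ with $V$ a Hilbert-space unitary is available precisely because hyper-maximal neutral subspaces are simultaneously maximal nonnegative and maximal nonpositive. The block reductions you describe are routine: writing $(f,g)\in T$ as $(f_+ + Af_+ + Bg_-,\, g_- + Cf_+ + Dg_-)$, one sees that the $\sH_{1,+}$-component of $\dom T$ is all of $\sH_{1,+}$ while the $\sH_{1,-}$-component over a fixed $f_+$ is $Af_+ + \ran B$, so $\dom T$ is closed iff $\ran B$ is closed; symmetrically for $\ran T$ and $\ran C$. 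The spectral step is also fine: $\ran B$ closed $\Leftrightarrow$ $1$ is isolated in (or absent from) $\sigma(AA^*)\subset[0,1]$, and since $\sigma(AA^*)\setminus\{0\}=\sigma(A^*A)\setminus\{0\}$ the same holds for $\sigma(A^*A)$, whence $\ran C$ is closed.

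One minor remark: the ``bookkeeping obstacle'' you anticipate is harmless, but you should note explicitly that the angular operator $V$ exists and is everywhere defined on $\sK_+$ because $T=T^{[\perp]_\sK}$ is in particular maximal nonnegative (a short argument: any nonnegative extension $\sM\supset T$ must satisfy $\sM\subset T^{[\perp]_\sK}=T$). This is what guarantees that the block picture is global rather than merely densely defined.
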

Denote
\begin{equation}\label{DDe}
 \dD=\{\lambda\in\dC:\, |\lambda|<1 \}, \quad  \dD_e=\{\lambda\in\dC:\, |\lambda|>1 \},
 \quad  \dT=\{\lambda\in\dC:\, |\lambda|=1 \}.
\end{equation}
If $V$ is a single-valued closed isometric operator in a Pontryagin space $\sH$
then the subspaces $\ran(V-\lambda I)$ are closed for every $\lambda\in\dD\cup\dD_e$, see e.g.
\cite[Section 1.3]{LS74} or Lemma~\ref{PontryIsom} below,
and each of the sets $\sigma_p(V)\cap\dD$ and $\sigma_p(V)\cap\dD_e$ consist of at most $\kappa=\kappa_-(\sH)$ eigenvalues,
see e.g. \cite[p.49 Corollary 2]{IKL}.
Denote by  $\sN_\l$ the \textit{defect subspace} of $V$:
$$
\sN_\l:=\sH[-]\ran(I-\overline{\lambda} V), \quad \lambda\in\dD\cup\dD_e.
$$
Then
$$
\sN_\l=\ker(V^{-[*]}-\lambda I)=\{f_\l:(f_\l,\l f_\l)^T\in V^{-\zx}\}.
$$
As is known, see  \cite[Theorem 6.1]{IKL},  the numbers $\dim\sN_\l$ take a constant value $n_+(V)$ for all $\lambda\in\dD\setminus{\overline{\sigma_p(V)}}^{-1}$, and $n_-(V)$ for all  $\lambda\in\dD_e\setminus{\overline{\sigma_p(V)}}^{-1}$.
The numbers $n_\pm(V)$ are called the \textit{defect numbers} of $V$.

\begin{definition}\label{Def:simple}
The isometric operator $V$ in ${\sH}$ is called {\it simple}, if $\sigma_p(V)\setminus\dT=\emptyset$ and
$$\ov\spn\{\sN_\l:\l\in\dD\cup\dD_e\}={\sH}.$$
\end{definition}

In the case of Pontryagin spaces some further results on isometric
and unitary relations can be established. For any isometric relation
$T$ between two Kre\u{\i}n spaces it is clear that $\ker T$ and
$\mul T$ are neutral subspaces. Therefore, in a Pontryagin space
$\ker T$ and $\mul T$ are necessarily finite dimensional. If $T$ is
closed then $\ker T$ and $\mul T$ are also closed. In Pontryagin
spaces the following stronger result is true.

\begin{lemma}\label{PontryIsom}
Let $T$ be a closed isometric relation from the Pontryagin space
$(\sH_1,J_{\sH_1})$ to the Pontryagin space $(\sH_2,J_{\sH_2})$.
Then the domain and the range of $T$ are closed.
\end{lemma}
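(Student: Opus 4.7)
The plan is to prove that $\dom T$ is closed; then closedness of $\ran T$ follows at once by applying the same argument to the closed isometric relation $T^{-1}$ from $(\sH_2,J_{\sH_2})$ to $(\sH_1,J_{\sH_1})$. The key preliminary observation is that $\mul T$, being neutral in the Pontryagin space $\sH_2$, satisfies $\dim\mul T\le\kappa_-(\sH_2)<\infty$, so $\mul T$ is closed. Consequently $T/(\{0\}\times\mul T)$ is a Banach space, and by the open mapping theorem the image $\dom T=\pi_1(T)$ of the bounded projection $\pi_1\colon T\to\sH_1$ is closed if and only if there exists $C>0$ such that
\[
\|g\|\le C\|f\|\quad\textup{for every }\{f,g\}\in T\textup{ with }g\perp\mul T,
\]
where the orthogonality and norms are taken with respect to the Hilbert inner product induced by a fundamental symmetry $J_{\sH_2}$ (and any $g$ can be adjusted by a vector of $\mul T$ to meet this orthogonality while keeping $\{f,g\}\in T$).

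I would prove this bound by contradiction. If it fails, a rescaling gives a sequence $\{f_n,g_n\}\in T$ with $\|g_n\|=1$, $g_n\perp\mul T$, and $f_n\to 0$. Writing $g_n=g_n^++g_n^-$ via the fundamental decomposition $\sH_2=\sH_2^+[+]\sH_2^-$ with $\dim\sH_2^-<\infty$, the isometry property yields
\[
\|g_n^+\|^2-\|g_n^-\|^2=[g_n,g_n]_{\sH_2}=[f_n,f_n]_{\sH_1}\to 0.
\]
Because $\sH_2^-$ is finite-dimensional, a subsequence satisfies $g_n^-\to h^-$ in norm; combined with $\|g_n\|^2=\|g_n^+\|^2+\|g_n^-\|^2=1$, the displayed identity forces $2\|h^-\|^2=1$, so $h^-\neq 0$. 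Passing to a further subsequence, weak compactness in $\sH_2^+$ yields $g_n^+\rightharpoonup h^+$, whence $\{f_n,g_n\}\rightharpoonup\{0,h\}$ with $h=h^++h^-\neq 0$.

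Now $T$, as a closed subspace in the Kre\u{\i}n topology of $\sH_1\times\sH_2$, is closed in the equivalent Hilbert topology coming from $J_{\sH_1}\times J_{\sH_2}$, and hence weakly closed. Therefore $\{0,h\}\in T$, i.e., $h\in\mul T$. On the other hand the orthogonality $g_n\perp\mul T$ passes to the weak limit, giving $h\perp\mul T$, and therefore $h=0$, contradicting $h^-\neq 0$. This produces the required constant $C$ and proves that $\dom T$ is closed.

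The main obstacle is the absence of a norm-Cauchy property for the positive component $g_n^+$: the isometry controls only the indefinite form $[g_n,g_n]_{\sH_2}$, not the Hilbert norm. The point that makes the argument work in the Pontryagin (as opposed to a general Kre\u{\i}n) setting is that the indefiniteness of $\sH_2$ is concentrated in the \emph{finite-dimensional} subspace $\sH_2^-$, where weak and strong convergence coincide; the isometry then propagates norm information from $g_n^-$ to $\|g_n^+\|^2$, and weak compactness in the Hilbert space $\sH_2^+$ closes the argument.
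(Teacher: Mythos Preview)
Your argument is correct, but it takes a genuinely different route from the paper's proof. The paper invokes its Lemma~2.1 to decompose $\dom T=\sT_+\dot+\sT_1$ with $\sT_+$ positive (its closure maximal uniformly positive in $\cdom T$) and $\sT_1$ finite-dimensional; restricting $T$ to $\sT_+$, the isometry and uniform positivity give $[f',f']_2=[f,f]_1\ge\delta\|f\|_1^2$, so $(T_+)^{-1}$ is a closed bounded operator and $\ran T_+$ is closed, after which $\ran T$ is obtained as a finite-dimensional extension. Your proof instead argues by contradiction with sequences: you exploit finite-dimensionality of $\sH_2^-$ to extract a strongly convergent negative component, use the isometry to pin down $\|h^-\|^2=1/2$, and then use weak compactness in $\sH_2^+$ together with weak closedness of $T$ to land the limit in $\mul T$, contradicting the Hilbert-orthogonality $g_n\perp\mul T$. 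The paper's approach is more structural and constructive (it isolates a ``good'' uniformly definite part where isometric means norm-controlled), while yours is a soft Banach-space argument that avoids the preparatory decomposition lemma entirely and works directly with the closed-range criterion; both hinge, as you note, on the finite negative index of the target space.
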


\begin{proof}
The isometry of $T$ means that $T^{-1}\subseteq  T^{[*]}$. Taking
inverses one gets $T=(T^{-1})^{-1}\subseteq  T^{-[*]}$, i.e., $T$ and
$T^{-1}$ are simultaneously isometric. Therefore, to prove the
statement it suffices to prove that the range of $T$ is a closed
subspace in $\sH_2$, since $T$ is closed precisely when its inverse
$T^{-1}$ is closed and clearly $\dom T=\ran T^{-1}$.

Now let $\dom T$ be decomposed as in Lemma~\ref{lem:2.1},
\begin{equation}\label{Tdom2}
 \dom T= {\sT}_+ \dot{+} {\sT}_1,
\end{equation}
so that $\sD_+:=\overline{{\sT}_+}$ is a maximal uniformly positive
subspace of $\sD:=\cdom T$.
Next introduce the restriction of (the graph of) $T$ by setting
\[
 T_+:=T \cap \left( \sD_+ \times \sH_2\right).
\]
Then $T_+$ is closed and as a restriction of $T$ it is also an
isometric relation from $(\sH_1,J_{\sH_1})$ to $(\sH_2,J_{\sH_2})$.
Moreover, $\dom T_+={\sT}_+\subseteq  \sD_+$ is a uniformly positive
subspace. This implies that for all $\{f,f'\}\in T_+$ and some
$\delta>0$,
\[
 [f',f']_2=[f,f]_1\geq \delta \|f\|_1^2,
\]
which shows that $\ker T_+=\{0\}$, so that $(T_+)^{-1}$ is an
isometric operator, and, moreover,
\[
 \|(T_+)^{-1}f'\|_1^2=\|f\|_1^2\leq \delta^{-1} [f,f]_1 \leq \delta^{-1}
 \|f\|_1^2.
\]
Therefore, the closed isometric operator $(T_+)^{-1}$ is also
bounded. Consequently, $\ran T_+=\dom (T_+)^{-1}$ is a closed
subspace in $\sH_2$. On the other hand, since $\dom T$ admits the
decomposition \eqref{Tdom2}, where ${\sT}_1$ is finite dimensional and
$\mul T=\mul T_+$ (also finite dimensional), one concludes that
\[
 \ran T=T({\sT}_1) + \ran T_+
\]
as a finite dimensional extension of the closed subspace $\ran T_+$
is a closed subspace of $\sH_2$. This completes the proof.
\end{proof}

Lemma \ref{PontryIsom} can be seen as an extension of
\cite[Theorem~IX.3.1]{Bognar}. It is known e.g. from
\cite[Theorem~IX.3.2]{Bognar} and \cite[Theorems~6.2,~6.3]{IKL})
that if $T$ is an isometric operator in a Pontryagin space such that
$\cran T$ (resp. $\cdom T$) is a nondegenerate subspace, then $T$
(resp. $T^{-1}$) is continuous. The next lemma contains main
properties of isometric relations acting between two Pontryagin
spaces.

\begin{lemma}\label{PontryIsom2}
For an isometric relation $T$ from the Pontryagin space
$(\sH_1,J_{\sH_1})$ to the Pontryagin space $(\sH_2,J_{\sH_2})$ the
following statements hold:
\begin{enumerate}
\def\labelenumi{\rm (\roman{enumi})}
\item If $\cran T$ (resp. $\cdom
T$) is a nondegenerate subspace of $(\sH_2,J_{\sH_2})$, then $T$
(resp. $T^{-1}$) is a continuous operator.

\item If\; $T$ is densely defined then $\kappa_-(\sH_1)\leq\kappa_-(\sH_2)$ and if, in addition,
$\kappa_-(\sH_1)=\kappa_-(\sH_2)$, then $\ran \overline{T}$ is a
closed nondegenerate subspace of $\sH_2$ and, moreover, $T$ and
$T^{-1}$ are continuous operators.

\item If $\ran T$ is dense in $\sH_2$ then $\kappa_-(\sH_1)\geq\kappa_-(\sH_2)$ and if, in addition,
$\kappa_-(\sH_1)=\kappa_-(\sH_2)$, then $\dom \overline{T}$ is a
closed nondegenerate subspace of $\sH_1$ and, moreover, $T$ and
$T^{-1}$ are continuous operators.

\item If the relation $T$ is unitary and $\ker T=\mul T=\{0\}$, then
$\kappa_-(\sH_1)=\kappa_-(\sH_2)$ and $T$ is a standard unitary
operator.

\item If the relation $T$ is unitary and $\kappa_-(\sH_1)=\kappa_-(\sH_2)$, then
\begin{equation}\label{kermul}
 \mul T=\{0\}\quad\Longleftrightarrow\quad \ker T=\{0\}.
\end{equation}
In particular, if $\kappa_-(\sH_1)=\kappa_-(\sH_2)$ then a unitary
relation $T$ is an operator if and only if it is a standard unitary
operator.
\end{enumerate}

\end{lemma}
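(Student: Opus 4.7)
The plan is to establish (i)--(v) in order, with (ii) carrying the bulk of the work, (iii) following by symmetry, and (iv)--(v) deduced from (ii)--(iii) together with Proposition \ref{UNIT}. Throughout I use that the closure of an isometric relation between Pontryagin spaces is again isometric (by joint continuity of the indefinite inner product) and that Lemma \ref{PontryIsom} applies to closed isometric relations.

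For (i), nondegeneracy of $\cran T$ first forces $\mul T=\{0\}$: indeed $\mul T\subseteq\ran T\cap(\ran T)^{[\perp]}=\cran T\cap(\cran T)^{[\perp]}=\{0\}$, using $(\ran T)^{[\perp]}=(\cran T)^{[\perp]}$ from continuity of $[\cdot,\cdot]$. Hence $T$ is a single-valued isometric operator with range in the Pontryagin subspace $\cran T$, and the classical continuity results cited just before the lemma (\cite[Theorem~IX.3.2]{Bognar} and \cite[Theorems~6.2,~6.3]{IKL}) apply. The claim for $T^{-1}$ is obtained by applying the first part to $T^{-1}$, which is again isometric.

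For (ii), pass to the closure $\bar T$; Lemma \ref{PontryIsom} makes $\dom\bar T$ and $\ran\bar T$ closed, and density of $\dom T$ forces $\dom\bar T=\sH_1$. Fix a fundamental decomposition $\sH_1=\sH_1^+[+]\sH_1^-$ with $\dim\sH_1^-=\kappa:=\kappa_-(\sH_1)$, pick a basis $f_1,\ldots,f_\kappa$ of $\sH_1^-$, and choose any $g_i\in\bar T f_i$. The isometry property \eqref{isome} gives $\ker\bar T\subseteq(\dom\bar T)^{[\perp]}=\{0\}$, so the identity $[g_i,g_j]_{\sH_2}=[f_i,f_j]_{\sH_1}$ shows that $\sL:=\spn\{g_1,\ldots,g_\kappa\}$ is a $\kappa$-dimensional negative subspace of $\sH_2$, giving $\kappa_-(\sH_1)\leq\kappa_-(\sH_2)$. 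When equality holds, $\sL$ is \emph{maximal} negative in $\sH_2$, so $\sL^{[\perp]}$ is uniformly positive. Any $m\in\mul\bar T$ satisfies $[m,m]_{\sH_2}=0$ (by isometry applied to $(0,m)\in\bar T$) and lies in $(\ran\bar T)^{[\perp]}\subseteq\sL^{[\perp]}$, hence is simultaneously neutral and positive, forcing $m=0$. Thus $\bar T$ is a closed isometric operator defined on all of $\sH_1$, continuous by the closed graph theorem; the same argument applied to $\bar T^{-1}$ on the closed subspace $\ran\bar T$ yields continuity of $T^{-1}$. Finally, $\ran\bar T=\bar T(\sH_1^+)[+]\bar T(\sH_1^-)$ is an orthogonal sum of a positive subspace and a maximal negative one, hence nondegenerate.

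Part (iii) follows from (ii) applied to $T^{-1}$. For (iv), Proposition \ref{UNIT} gives $\ker T=(\dom T)^{[\perp]}$ and $\mul T=(\ran T)^{[\perp]}$, so $\ker T=\mul T=\{0\}$ is equivalent to density of both $\dom T$ and $\ran T$ in the nondegenerate Pontryagin setting; (ii) and (iii) then yield $\kappa_-(\sH_1)=\kappa_-(\sH_2)$ and make $T=\bar T$ a continuous operator whose closed range is also dense, i.e.\ $\ran T=\sH_2$, so $T$ is standard unitary. For (v), under equal indices $\mul T=\{0\}$ means $\ran T$ is dense, so (iii) makes $T^{-1}$ an operator, i.e.\ $\ker T=\mul T^{-1}=\{0\}$; the reverse is symmetric via (ii), and the final characterization of operator versus standard unitary is then (iv). The main obstacle is the step in (ii) that rules out $\mul\bar T\neq\{0\}$ under equal indices: it rests on recognizing the image of the maximal negative subspace of $\sH_1$ as \emph{maximal} negative in $\sH_2$, trapping $\mul\bar T$ between an isometry-induced neutrality condition and the uniform positivity of $\sL^{[\perp]}$, which together force it to vanish.
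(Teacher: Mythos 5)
Your proof is correct and follows essentially the same route as the paper: produce a maximal negative subspace inside $\ran \overline{T}$ from one in $\dom T$, use maximality under equal indices to kill the isotropic/multivalued part, and conclude continuity via Lemma \ref{PontryIsom} and the closed graph theorem, with (iii)--(v) reduced to (i)--(ii) and Proposition \ref{UNIT}. The only difference is cosmetic: where the paper cites Bogn\'ar (a dense subspace contains a maximal negative subspace; a subspace containing one is nondegenerate), you pass to $\overline{T}$ first and argue these two points directly.
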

\begin{proof}
(i) By assumption $T\subseteq  T^{-[*]}$ and $\overline{T}\subseteq
T^{-[*]}$, i.e., $\overline{T}$ is also isometric. Hence, if $\cran T$ is nondegenerate,
\eqref{isome} implies
\[
 \mul \overline{T}\subset \cran T\cap (\cran T)^{[\perp]}=\{0\}.
\]
Thus $T$ is a closable operator, which by Lemma~\ref{PontryIsom} and
the closed graph theorem implies that $\overline{T}$ and, therefore,
also $T$ is continuous. Similarly it is seen that
\[
 \ker \overline{T}\subseteq  \cdom T\cap (\cdom T)^{[\perp]}=\{0\},
\]
if $\cdom T$ is nondegenerate, and then $\overline{T}^{-1}$ and
$T^{-1}$ are continuous.

(ii) Since $\cdom T=\sH_1$ is nondegenerate, $T^{-1}$ is a
continuous operator by item (i). On the other hand $\dom T$, as a
dense subspace of $\sH_1$, contains a negative subspace
$\cD_-\subset \dom T$ of dimension $\kappa_-(\sH_1)$; see
\cite[Theorem~IX.1.4]{Bognar}. Then also $\ran T$ contains a
negative subspace of the same dimension and hence
$\kappa_-(\sH_2)\geq \kappa_-(\sH_1)$.

Now assume that $\kappa_-(\sH_1)=\kappa_-(\sH_2)$. Then $\cran
T=\cran \overline{T}$ is necessarily a nondegenerate subspace of the
Pontryagin space $(\sH_2,J_{\sH_2})$, see e.g.
\cite[Lemma~II.10.5]{Bognar}, and (i) shows that $T$ is a continuous
operator.

(iii) This follows by applying (ii) to $T^{-1}$, which is also an
isometric relation.

(iv) If $T$ is unitary then the conditions $\ker T=\mul T=\{0\}$ are
equivalent to $\cdom T=\sH_1$ and $\cran T=\sH_2$; see \eqref{eee}
in Proposition~\ref{UNIT}. Now the assertions follow from (ii) and
(iii).

(v) Let $\kappa_-(\sH_1)=\kappa_-(\sH_2)$ and let $T$ be unitary. By
symmetry it suffices to prove one implication in \eqref{kermul},
say, ``$\Rightarrow$''. The condition $\mul T=\{0\}$ is equivalent
to $\cran T=\sH_2$; see \eqref{eee}. Now item (iii) shows that $T$ and
$T^{-1}$ are continuous operators. Thus, in particular, $\ker
T=\{0\}$ and \eqref{eee} together with Lemma~\ref{PontryIsom} shows
that $\dom T=\sH_1$ and $\ran T=\sH_2$.
\end{proof}

\subsection{Operator colligations}
Let $\sH$ be a Pontryagin space, $\sL_1$ and $\sL_2$ be Hilbert spaces.
The set of bounded everywhere defined operators from  $\sL_1$ to $\sL_2$  is denoted by $\mathbf{B}(\sL_1,\sL_2)$, $\mathbf{B}(\sL_1):=\mathbf{B}(\sL_1,\sL_1)$.
Let $U$ be a bounded operator
from ${\sH}\oplus{\sL}_1 $ to ${\sH}\oplus{\sL}_2$  represented in the block form
\[
U=\left(%
\begin{array}{cc}
  T & F \\
  G & H \\
\end{array}%
\right):\begin{pmatrix} {\sH} \\{\sL}_1
\end{pmatrix}\rightarrow\begin{pmatrix} {\sH} \\{\sL}_2
\end{pmatrix}
\]
The quadruple $({\sH},{\sL}_1,{\sL}_2, U)$ is called a \textit{colligation}, $\sH$ is the state space, ${\sL}_1$ and ${\sL}_2$ are the incoming and the outgoing spaces, $T$ is the main operator and $U$ is called the connecting operator of the colligation. The colligation $({\sH},{\sL}_1,{\sL}_2, U)$  is called \textit{unitary}, if $U$ is a unitary operator from ${\sH}\oplus{\sL}_1 $ to ${\sH}\oplus{\sL}_2$. The colligation $({\sH},{\sL}_2,{\sL}_1, U^{[*]})$  is called adjoint to the colligation $({\sH},{\sL}_1,{\sL}_2, U)$; cf. \cite{Br,ADRS97}.

Components of a \textit{unitary colligation} satisfy the following identities
\begin{equation}\label{E:komp}
\begin{split}
&T^\zx T+G^\zx G=I_{{\sH}},\ F^\zx F+H^*H=I_{\sL_1},\ T^\zx F+G^\zx
H=0,\\& TT^\zx+FF^\zx=I_{{\sH}},\ GG^\zx+HH^*=I_{\sL_2},\
TG^\zx+FH^*=0,
\end{split}
\end{equation}
which are equivalent to the identities
\[
U^{[*]}U=I_{{\sH}\oplus{\sL}_1},\quad UU^{[*]}=I_{{\sH}\oplus{\sL}_2}.
\]
A colligation $({\sH},{\sL}_1,{\sL}_2,U)$ is said to be
\textit{closely connected}, if
\begin{equation}\label{eq:Closely_C}
    \sH=\overline{\textup{span}}\,\{\ran (T^mF),\ran ((T^{[*]})^nG^{[*]}):\,m,n\ge 0\}.
\end{equation}
A unitary colligation is closely connected if and only if the operator $U$ has no nontrivial reducing subspaces.
The operator valued function
$$
\Theta_\Delta(\l):=H+\l G(I-\l T)^{-1}F,\quad 1/\l\in\rho(T),
$$
is called the characteristic function of the unitary colligation $({\sH},{\sL}_1,{\sL}_2,U)$.

Recall, see e.g. \cite{ADRS97}, that a $\mathbf{B}(\sL_1,\sL_2)$-valued  function $\Theta(\lambda)$ is said to belong
to the \textit{generalized Schur class} $\cS_{\kappa}^0(\sL_1,\sL_2)$ if it is holomorphic in a neighborhood $\Omega$ of $0$ and the
kernel
\begin{equation}\label{kerLambda}
{\mathsf K}_\omega^\Theta(\lambda)=
\frac{I-\Theta(\lambda)\Theta(\omega)^*}{1-\lambda\overline{\omega}}
\end{equation}
has $\kappa$ negative squares in $\Omega\times\Omega$, i.e. for any finite set of points $\omega_1,\dots,\omega_n$ in $\Omega$ and vectors $f_1,\dots,f_n$ in $\sL_2$, the
Hermitian matrix
\begin{equation}\label{kerLambda1}
\left(({\mathsf K}_{\omega_i}^\Theta(\omega_j)f_j,f_i)_{\sL_2}\right)_{i,j=1}^n
\end{equation}
has at most $\kappa$ negative eigenvalues, and for some choice of  $\omega_1,\dots,\omega_n$ in $\Omega$ and $f_1,\dots,f_n$ in $\sL_2$ the matrix~\eqref{kerLambda1} has exactly $\kappa$
negative eigenvalues.

As is known, see~\cite{ADRS97}, the characteristic function of a closely connected unitary colligation belongs
to the generalized Schur class $\cS_{\kappa}^0(\sL_1,\sL_2)$, where $\kappa=\kappa_-(\sH)$. Moreover, the converse is also true; see e.g.~\cite[Theorem~2.3.1]{ADRS97}.

\begin{theorem}\label{thm:Realiz}
Let $\sL_1$ and $\sL_2$ be Hilbert spaces and let $S(z)$ belong to the generalized Schur class $\cS_{\kappa}^0(\sL_1,\sL_2)$. Then there exists a closely connected unitary colligation $({\sH},{\sL}_1,{\sL}_2,U)$, such that  the corresponding characteristic function $\Theta_\Delta(z)$ coincides with $S(z)$ in a neighborhood $\Omega$ of $0$.
\end{theorem}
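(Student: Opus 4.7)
The plan is to realize $S$ as the characteristic function of a canonically constructed unitary colligation on a reproducing kernel Pontryagin space; this is the generalized Schur version of the classical de~Branges--Rovnyak construction developed in \cite{ADRS97}.

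First, I would form the reproducing kernel Pontryagin space $\sH$ associated with the kernel $\mathsf{K}^S_\omega(\lambda)$ from \eqref{kerLambda}. Since $S\in\cS_\kappa^0(\sL_1,\sL_2)$, this kernel has $\kappa$ negative squares on $\Omega\times\Omega$, so $\sH$ is a Pontryagin space with $\kappa_-(\sH)=\kappa$ whose elements are $\sL_2$-valued functions holomorphic on $\Omega$. The space $\sH$ is the closed linear span of the sections $\mathsf{K}^S_\omega(\cdot)y$ ($\omega\in\Omega$, $y\in\sL_2$), with reproducing property $[f,\mathsf{K}^S_\omega(\cdot)y]_{\sH}=(f(\omega),y)_{\sL_2}$.

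Second, I would define the block components by the backward-shift formulas
\[
(Tf)(\lambda)=\frac{f(\lambda)-f(0)}{\lambda},\qquad (Fu)(\lambda)=\frac{S(\lambda)-S(0)}{\lambda}u,
\]
\[
Gf=f(0),\qquad Hu=S(0)u, \qquad f\in\sH,\ u\in\sL_1,
\]
and let $U=\begin{pmatrix}T&F\\G&H\end{pmatrix}\colon\sH\oplus\sL_1\to\sH\oplus\sL_2$. The critical step is to show that $U$ is unitary. On the dense subspace spanned by kernel sections of $\mathsf{K}^S$ and its $\sL_1$-valued counterpart attached to $S^{[*]}$, one computes $[Ux,Ux]-[x,x]$ directly; after reindexing and using the reproducing property the difference collapses, via the defining kernel identity
\[
(1-\lambda\bar\omega)\,\mathsf{K}^S_\omega(\lambda)=I_{\sL_2}-S(\lambda)S(\omega)^*
\]
and its counterpart for $S^{[*]}$, to zero, so $U$ is isometric. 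Running the same calculation for $U^{[*]}$ yields its isometry as well, hence $U$ is unitary.

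Third, a direct power series computation with $(I-\lambda T)^{-1}Fu=\sum_{n\ge 0}\lambda^n T^n Fu$ shows that $GT^nFu$ equals the $(n{+}1)$-th Taylor coefficient of $S(\cdot)u$ at the origin, which telescopes to
\[
\Theta_U(\lambda)=H+\lambda G(I-\lambda T)^{-1}F=S(\lambda)
\]
on a neighborhood of $0$. Close connectedness follows since every kernel section equals $(I-\bar\omega T^{[*]})^{-1}G^{[*]}y$ and an analogous formula holds for $F\sL_1$, so the span in \eqref{eq:Closely_C} is all of $\sH$.

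The main obstacle is the unitarity check in step two: all four identities \eqref{E:komp} must be verified on an indefinite inner product space, and the boundedness of $T$ and $F$ defined by a difference quotient is not immediate. Both are handled by first establishing the isometry on kernel sections---where everything reduces to the reproducing kernel equation for $S$---and then extending to $\sH$ by continuity, with Lemma~\ref{PontryIsom} guaranteeing that the domain and range of the resulting closed isometric extension are closed in the Pontryagin space setting.
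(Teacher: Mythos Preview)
The paper does not supply its own proof of this theorem; it is quoted from \cite[Theorem~2.3.1]{ADRS97} and simply invoked as a known result. Your outline via the de~Branges--Rovnyak reproducing kernel Pontryagin space and the backward-shift colligation is precisely the construction carried out in that reference, so your proposal is correct and coincides with the cited source.
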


In what follows a $\mathbf{B}(\sL_1,\sL_2)$-valued
function $\Theta(\cdot)$ holomorphic in some open subset $\Omega\subset\dD$ is said to belong to the generalized Schur class $\cS_{\kappa}(\sL_1,\sL_2)$,
if the kernel \eqref{kerLambda} has $\kappa$ negative squares in $\Omega\times\Omega$. In particular, we do not require that $0\in\Omega$, which implies
that characteristic functions of unitary colligations used in Theorem \ref{thm:Realiz} are not sufficient to give a realization for all functions $\Theta(\cdot)$ from the class $\cS_{\kappa}(\sL_1,\sL_2)$ .
In the next section we introduce the notions of a unitary boundary pair for an isometric operator and an associated Weyl function as a replacement for unitary colligations and their characteristic functions. These new notions allow to realize an arbitrary operator function from the class $\cS_{\kappa}(\sL_1,\sL_2)$ as the Weyl function of a Pontryagin space isometric operator, corresponding to some unitary boundary pair.

\section{Unitary boundary pairs for isometric operators}
\label{sec3}
\subsection{Unitary boundary pairs and the main transform}
\label{subsec3.1}

Let ${\sH}$ be a Pontryagin space with the negative index $\k$ and the fundamental symmetry $J_\sH$
and let ${\sL}_1$ and ${\sL}_2$ be Hilbert spaces. In this section we introduce the notion of
\textit{a unitary boundary pair for an isometric operator} $V$:
here, and in what follows, $V$ is assumed to be closed.

For this purpose equip the Hilbert spaces
${\sH}^2$ and ${\sL}={\sL}_1\times{\sL}_2$ with the indefinite inner
products by the formulas
$$
[\wh{f},\wh{g}]_{{\sH}^2}=( J_{{\sH^2}}\wh{f},\wh{g})_{\sH^2},\quad
[\wh{u},\wh{v}]_{{\sL}}=\left( J_{{\sL}}\wh{u},\wh{v}\right)_{\sL},
$$
where
$$
J_{{\sH}^2}=\left(%
\begin{array}{cc}
  J_{{\sH}} & 0 \\
  0 & -J_{{\sH}} \\
\end{array}%
\right),\qquad
J_{{\sL}}=\left(%
\begin{array}{cc}
  I_{{\sL}_1} & 0 \\
  0 & -I_{{\sL}_2} \\
\end{array}%
\right),
$$
and $\wh{f}=\{f,f'\}, \wh{g}=\{g,g'\}\in \sH^2$, $\wh{u}=\{u_1,u_2\}, \wh{v}=\{v_1,v_2\}\in\sL$.
Then $({\sH}^2, J_{{\sH}^2})$ and
$({\sL}, J_{{\sL}})$ are Kre\u{\i}n spaces.
In particular, for $\sV\subset {\sH}^2$ the linear set $\sV^{[\perp]}$
in the Kre\u{\i}n space $({\sH}^2, J_{{\sH}^2})$
can be characterized as follows
\begin{equation}\label{e:invadj}
 \wh{g} \in \sV^{[\perp]} \quad\Longleftrightarrow\quad
 [\wh{f},\wh{g}]_{{\sH}^2}=0 \text{ for all } \wh{f}\in \sV \quad\Longleftrightarrow\quad
 \wh{g} \in \sV^{-[*]}.
\end{equation}

\begin{definition}\label{D:1}
Let ${\sL}_1$ and ${\sL}_2$ be Hilbert spaces, let $V$ be a closed
isometric operator (or isometric relation) in ${\sH}$ and let $\G$ be a linear relation
${\sH}^2\rightarrow {\sL}$, where ${\sL}:={\sL}_1\times{\sL}_2$.

The pair $({\sL},\G)$ will be called a unitary boundary pair for $V$ if:
\begin{enumerate}
\item
$V=\ker \G$ and for all $\{\wh f, \wh
u\},\{\wh g, \wh v\}\in \G$
 the following identity holds
 \begin{equation}\label{E:1.1}
[f,g]_{{\sH}}-[f',g']_{{\sH}}=(u_1,v_1)_{{\sL}_1}-(u_2,v_2)_{{\sL}_2};
\end{equation}

\item
$\G$ is maximal in the sense that if  $\{\wh g, \wh v\}\in{\sH}^2\times{\sL}$ satisfies~\eqref{E:1.1} for all  $\{\wh f,\wh u\}\in\G$, then
  $\{\wh g, \wh v\}\in \G$.
Here
$$\{\wh f, \wh u\}= \left\{ \begin{pmatrix} f \\ f' \end{pmatrix},
         \begin{pmatrix} u_1 \\ u_2 \end{pmatrix}
 \right\},\quad
 \{\wh g, \wh v\}=\left\{
\begin{pmatrix} g \\ g' \end{pmatrix},
\begin{pmatrix} v_1 \\ v_2 \end{pmatrix}
 \right\}\in{\sH}^2\times{\sL}.
 $$
\end{enumerate}
\end{definition}

Item (1) of Definition~\ref{D:1} means that $\G$ is an isometric
linear relation from the Kre\u{\i}n space $({\sH}^2,
J_{{\sH}^2})$ to the Kre\u{\i}n space $({\sL},J_{{\sL}})$, while  items (1) and (2) together mean that $\G$ is unitary.

Application of Proposition~\ref{UNIT} to a unitary boundary pair
leads to the following statement.

\begin{proposition}\label{cor:ker_G}
  Let $({\sL},\G)$ be  a unitary boundary pair for $V$. Then:
\begin{enumerate}
\def\labelenumi{\rm (\roman{enumi})}
  \item $V_*:=\dom \G$ is dense in $V^\sx$;
  \item $\ran\G$ is dense in ${\sL}$ if and only if $\mul\G=\{0\}$;
  \item $\ran\G={\sL}$ if and only if $\dom\G=V^{-[*]}$ and $\mul\G=\{0\}$.
\end{enumerate}
\end{proposition}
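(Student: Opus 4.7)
The plan is to exploit the fact that $\G$ is a unitary linear relation between the Kre\u{\i}n spaces $(\sH^2,J_{\sH^2})$ and $(\sL,J_\sL)$, so Proposition~\ref{UNIT} applies directly: $\dom\G$ is closed iff $\ran\G$ is closed, and the two identities $\ker\G=(\dom\G)^{[\perp]}$, $\mul\G=(\ran\G)^{[\perp]}$ hold, where the orthogonal complements are taken in the respective Kre\u{\i}n spaces. The bridge between ``$[\perp]$ in $\sH^2$'' and ``relation-theoretic operations in $\sH$'' is provided by \eqref{e:invadj}, which identifies $\sV^{[\perp]}$ for $\sV\subset\sH^2$ with the inverse adjoint $\sV^{-[*]}$.

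For (i), combining $V=\ker\G$ with the first identity above and with \eqref{e:invadj} gives $V=(\dom\G)^{-[*]}$. Taking the inverse adjoint once more and invoking the standard double-adjoint identity $(T^{-[*]})^{-[*]}=\overline{T}$ for any linear relation $T$ yields $V^{-[*]}=\overline{\dom\G}$, which is precisely the density statement. For (ii), Proposition~\ref{UNIT} gives $\mul\G=(\ran\G)^{[\perp]}$ in $(\sL,J_\sL)$; since the fundamental symmetry $J_\sL$ is a bounded involution, $\sU^{[\perp]}=J_\sL(\overline{\sU})^\perp$ where $\perp$ denotes the Hilbert-space orthogonal complement, and therefore $(\ran\G)^{[\perp]}=\{0\}$ if and only if $\ran\G$ is dense in $\sL$.

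Finally, (iii) is assembled from (i), (ii) and Proposition~\ref{UNIT}(i). For the forward direction, $\ran\G=\sL$ is both closed and dense, so $\dom\G$ is closed by Proposition~\ref{UNIT}(i), hence $\dom\G=\overline{\dom\G}=V^{-[*]}$ by (i), while density of $\ran\G$ gives $\mul\G=\{0\}$ by (ii). For the converse, $V^{-[*]}$ is automatically closed, so $\dom\G=V^{-[*]}$ forces $\ran\G$ closed; condition $\mul\G=\{0\}$ then gives density by (ii), and together they yield $\ran\G=\sL$. There is no real obstacle here: the argument is essentially bookkeeping around Proposition~\ref{UNIT}, and the only point requiring care is the translation between the Kre\u{\i}n-space operations on subsets of $\sH^2$ and the relation-theoretic operations on $V$, which is handled uniformly by \eqref{e:invadj}.
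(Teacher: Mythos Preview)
Your proof is correct and follows exactly the approach the paper indicates: the paper gives no detailed argument for this proposition, stating only that it is obtained by ``Application of Proposition~\ref{UNIT} to a unitary boundary pair,'' and your write-up supplies precisely that application, using \eqref{e:invadj} to translate the Kre\u{\i}n-space identity $\ker\G=(\dom\G)^{[\perp]}$ into $V=(\dom\G)^{-[*]}$ and then the double-orthocomplement/closure relation for (i), the identity $\mul\G=(\ran\G)^{[\perp]}$ for (ii), and Proposition~\ref{UNIT}(i) together with closedness of $V^{-[*]}$ for (iii).
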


Define the components $\G_1$ and $\G_2$ of $\G$ by
\begin{equation}\label{eq:G_1}
  \G_1:=\left\{ \{\widehat{f}, u_1 \}:  \left\{ \widehat{f},
         \begin{pmatrix} u_1 \\ u_2 \end{pmatrix}
 \right\} \in \G \quad\text{for some} \quad u_2\in {\sL}_2
  \right\};
\end{equation}
\begin{equation}\label{eq:G_2}
\G_2:=\left\{ \{\widehat{f},u_2 \}:  \left\{ \widehat{f},
         \begin{pmatrix} u_1 \\ u_2 \end{pmatrix}
 \right\} \in \G \quad\text{for some} \quad u_1 \in {\sL}_1
  \right\}.
\end{equation}
In the case that $\G_1$ and $\G_2$ are single-valued, i.e. $\mul \G_1=\mul
\G_2=0$ and $\dom \G=V^{-\zx}$ and $\ran\G={\sL}_1\times{\sL}_2$ the collection
$\{{\sL}_1\times{\sL}_2,\G_1,\G_2\}$ is called an \textit{ordinary boundary triple} for the isometric operator $V$.
For a Hilbert space isometric operator the corresponding notion was introduced and studied in~\cite{MM03,MM04}
as a boundary triple for the dual pair $(V,V^{-1})$.

For ordinary boundary triples an application of the closed graph theorem
shows that the component mappings $\G_1$ and $\G_2$ are bounded.
However, for a general unitary boundary pair $({\sL},\G)$ the mappings $\G_1$ and $\G_2$
need not be bounded or single-valued. With $\G_1$ and $\G_2$ one associates the extensions $V_1$ and $V_2$ of $V$
by the equalities
\begin{equation}\label{eq:A12}
  V_1:=\ker\G_1,\quad V_2:=\ker\G_2.
\end{equation}
It follows from the identity~\eqref{E:1.1} that $V_1$ is an \textit{expanding linear relation}
and $V_2$ is a \textit{contractive linear relation} in the Pontryagin space $\sH$.
Moreover, it is clear from \eqref{E:1.1} that the sets $\ker V_1\setminus \ker V$ and $\mul V_2\,(=\mul
V_2\setminus \mul V)$ consist of negative vectors in $\sH$ while the sets
$\ker V_2\setminus \ker V$ and $\mul V_1\,(=\mul V_1\setminus \mul V)$ consist
of positive vectors of $\sH$.

Let $\sH_j:={\sH}\times{\sL}_j$ $(j=1,2)$ be a Pontryagin space with the inner product
\[
 \left[\begin{pmatrix} f \\ u \end{pmatrix},\begin{pmatrix} f \\ u \end{pmatrix}\right]_{\sH_j}=[f,f]_\sH+\|u\|_{\sL_j}^2, \quad f\in\sH,\,u\in\sL_j,\,j=1,2.
\]
In establishing some properties of unitary boundary pairs it is useful to connect the unitary relation $\G$ which acts
between two Kre\u{\i}n spaces to another unitary relation that acts between two Pontryagin spaces, since
unitary relations between Pontryagin spaces have simpler structure. For this purpose
we introduce the following transform from the Kre\u{\i}n space $\left({\sH}^2\oplus{\sL},J_{{\sH}^2}\oplus(-J_\sL)\right)$ to
the Kre\u{\i}n space $\left({\sH_2}\oplus{\sH_1},(J_{{\sH_2}})\oplus(-J_{{\sH_1}})\right)$ by
\[
 \cJ:\left\{ \begin{pmatrix} f \\ f' \end{pmatrix},
         \begin{pmatrix} u_1 \\ u_2 \end{pmatrix}
     \right\}
 \mapsto
     \left\{ \begin{pmatrix} f \\ u_2 \end{pmatrix},
         \begin{pmatrix} f' \\ u_1 \end{pmatrix}
     \right\},
\quad f,f'\in{\sH} \text{ and }  u_1\in{\sL}_1,\,  u_2\in{\sL}_2
\]
It establishes a one-to-one correspondence between
(closed) linear relations $\G$ from $({\sH}^2, J_{{\sH_2}})$ to $(\sL, J_\sL)$
and (closed) linear relations $\cU$ from ${\sH_2}$ to ${\sH_1}$ via
\begin{equation}
\label{awig}
 \Gamma\mapsto\cU :=\cJ(\Gamma)=
 \left\{\,
 \left\{ \begin{pmatrix} f \\ u_2 \end{pmatrix},
         \begin{pmatrix} f' \\ u_1 \end{pmatrix}
 \right\} :\,
 \left\{ \begin{pmatrix} f \\ f' \end{pmatrix},
         \begin{pmatrix} u_1 \\ u_2 \end{pmatrix}
 \right\}
 \in \Gamma
 \,\right\}.
\end{equation}

The linear relation $\cU$ will be called the \textit{main transform} of $\G$; cf. \cite{DHMS06} for the case of symmetric operators.
In the following lemma, which is an analog of \cite[Proposition~2.10]{DHMS06}, some basic properties of the transform $\cJ$ are given.

\begin{lemma}\label{lem:Col_BP}
Let the linear relation $\G$ from $({\sH}^2, J_{{\sH}^2})$  to $({\sL}, J_{{\sL}})$ and the linear relation $\cU$ from
${\sH}\times{\sL}_2$ to ${\sH}\times{\sL}_1$ be connected by $\cU=\cJ(\G)$. Then
\begin{equation}\label{Lemma_col_bp}
    \cU^{-[*]}=\cJ(\G^{-[*]}).
\end{equation}
Moreover, the transform $\cJ$ establishes a one-to-one correspondence between isometric (unitary, contractive, expanding) relations $\G$ from $({\sH}^2, J_{{\sH}^2})$  to $({\sL}, J_{{\sL}})$ and isometric (unitary, contractive, expanding) relations $\cU$ from ${\sH}\times{\sL}_2$ to ${\sH}\times{\sL}_1$.
\end{lemma}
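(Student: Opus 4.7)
The plan is to derive every assertion of the lemma from one bookkeeping identity. For arbitrary $\{\wh f,\wh u\},\{\wh g,\wh v\}\in \sH^2\times\sL$ a direct regrouping of terms gives
\[
 \bigl([f,g]_\sH-[f',g']_\sH\bigr)-\bigl((u_1,v_1)_{\sL_1}-(u_2,v_2)_{\sL_2}\bigr)
 = \bigl[(f,u_2),(g,v_2)\bigr]_{\sH_2}-\bigl[(f',u_1),(g',v_1)\bigr]_{\sH_1},
\]
where on the right the pairs $(f,u_2),(g,v_2)\in\sH\times\sL_2=\sH_2$ and $(f',u_1),(g',v_1)\in\sH\times\sL_1=\sH_1$ live in the two Pontryagin spaces introduced just before the lemma. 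The left-hand side is the isometric defect of $\G$ at the pair $(\{\wh f,\wh u\},\{\wh g,\wh v\})$, compare \eqref{E:1.1}, while the right-hand side is the isometric defect of $\cU=\cJ(\G)$ at the corresponding images under $\cJ$. Since $\cJ$ is a bijection between $\G$ and $\cU$, this identity immediately yields the equivalence of the isometric, contractive, and expanding properties (with the correct direction of inequality in each case).

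For the adjoint identity \eqref{Lemma_col_bp} I would unfold $\cU^{-[*]}$ element-wise using \eqref{adjo}. An element $\{(g,v_2),(g',v_1)\}$ lies in $\cU^{-[*]}$ iff
\[
 [g,f]_\sH+(v_2,u_2)_{\sL_2}=[g',f']_\sH+(v_1,u_1)_{\sL_1}
\]
for every $\{(f,u_2),(f',u_1)\}\in\cU$. Rearranging and using the bijection $\cJ$ to replace each such pair by the corresponding $\{(f,f'),(u_1,u_2)\}\in\G$, this becomes $[g,f]_\sH-[g',f']_\sH=(v_1,u_1)_{\sL_1}-(v_2,u_2)_{\sL_2}$ for all $\{(f,f'),(u_1,u_2)\}\in\G$, which by \eqref{adjo} is exactly the defining condition $\{(g,g'),(v_1,v_2)\}\in\G^{-[*]}$. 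Applying $\cJ$ to the last element recovers $\{(g,v_2),(g',v_1)\}$, so $\cU^{-[*]}=\cJ(\G^{-[*]})$.

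The unitary case follows by combining the two steps: $\G$ is unitary iff $\G=\G^{-[*]}$, and \eqref{Lemma_col_bp} together with the bijectivity of $\cJ$ transfers this to $\cU=\cU^{-[*]}$. Since $\cJ$ is bijective on the ambient product spaces, the induced correspondence $\G\leftrightarrow\cU$ is one-to-one in each of the four listed classes. There is no real analytic obstacle — the proof is purely formal. The only point to watch is the bookkeeping: $\cJ$ transfers $f'$ from the ``range'' slot of $\G$ into the range slot of $\cU$, and swaps the roles of $u_1$ and $u_2$ between domain and range, so one must track the signs and slots carefully when checking the defect identity above.
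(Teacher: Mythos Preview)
Your proof is correct and follows essentially the same approach as the paper's: both hinge on the single bookkeeping identity
\[
\left[\binom{f}{u_2},\binom{g}{v_2}\right]_{\sH_2}-\left[\binom{f'}{u_1},\binom{g'}{v_1}\right]_{\sH_1}
=\left[\binom{f}{f'},\binom{g}{g'}\right]_{\sH^2}-\left[\binom{u_1}{u_2},\binom{v_1}{v_2}\right]_\sL,
\]
from which \eqref{Lemma_col_bp} and the contractive/expanding correspondences follow immediately. The only cosmetic difference is that the paper deduces the isometric and unitary cases from \eqref{Lemma_col_bp} via the inclusions $\cU^{-1}\subseteq\cU^{[*]}\Leftrightarrow\G^{-1}\subseteq\G^{[*]}$, whereas you read isometry directly off the defect identity; both arguments are equally short.
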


\begin{proof}
It is straightforward to check that for all elements of the form
$$
  \sk\{{\begin{pmatrix}f \\ f' \end{pmatrix},\begin{pmatrix}u_1 \\ u_2 \end{pmatrix}}\},
  \sk\{{\begin{pmatrix}g \\ g' \end{pmatrix},\begin{pmatrix}v_1 \\ v_2 \end{pmatrix}}\}\in
  \begin{pmatrix}{\sH} \\ {\sH} \end{pmatrix}\times\begin{pmatrix}{\sL}_1 \\ {\sL}_2 \end{pmatrix},
$$
the following identity is satisfied:
\begin{equation}\label{connectGreenEq}
\left[\begin{pmatrix}f \\ u_2 \end{pmatrix},\begin{pmatrix}g \\ v_2 \end{pmatrix}\right]_{\sH_2}
  -\left[\begin{pmatrix}f' \\ u_1 \end{pmatrix},\begin{pmatrix}g' \\ v_1 \end{pmatrix}\right]_{\sH_1}
  =
  \left[\begin{pmatrix}f \\ f' \end{pmatrix},\begin{pmatrix}g \\ g' \end{pmatrix}\right]_{\sH^2}
  -\left[\begin{pmatrix}u_1 \\ u_2 \end{pmatrix},\begin{pmatrix}v_1 \\ v_2 \end{pmatrix}\right]_\sL.
\end{equation}
In view of \eqref{e:invadj} this identity implies the equivalence
$$
\sk\{{\begin{pmatrix}f \\ u_2 \end{pmatrix},\begin{pmatrix}f' \\ u_1 \end{pmatrix}}\}\in\cU^{-[*]}\Longleftrightarrow
\sk\{{\begin{pmatrix}f \\ f' \end{pmatrix},\begin{pmatrix}u_1 \\ u_2 \end{pmatrix}}\}\in\G^{-[*]}
$$
which leads to identity \eqref{Lemma_col_bp}. It follows from \eqref{Lemma_col_bp} that
\[
\cU^{-1}\subseteq \cU^{[*]}\Longleftrightarrow\G^{-1}\subseteq \G^{[*]}, \quad
\cU^{-1}=\cU^{[*]}\Longleftrightarrow\G^{-1}=\G^{[*]},
 \]
i.e., $\cU$ is isometric (unitary) precisely when $\G$ is isometric (resp. unitary).
The con\-nection between contractive (expanding) relations $\G$ and $\cU$ is clear from \eqref{connectGreenEq}.
\end{proof}

The next proposition contains the basic properties of $\G_1$, $\G_2$ and $V_1$, $V_2$
for a unitary boundary pair $({\sL},\G)$.

\begin{proposition}\label{P:G1G2}
Let $({\sL},\G)$ be a unitary boundary pair for $V$, 
let  $\G_1$, $\G_2$ and $V_1$, $V_2$ be  defined by \eqref{eq:G_1}, \eqref{eq:G_1} and \eqref{eq:A12}, and let $\cU$ be the main transform of $\G$.
Then:
\begin{enumerate}
\def\labelenumi{\rm (\roman{enumi})}
\item $\G_1$ and $\G_2$ are closed;
\item $\ran\G_1=\sL_1$ and $\ran\G_2=\sL_2$;
\item $\mul\G_j=P_{\sL_j}(\mul\G)$, $j=1,2$, and the following equivalences hold:
\[
  \mul\G_1=\{0\}\quad \Longleftrightarrow\quad  \mul\G_2=\{0\} \quad \Longleftrightarrow\quad  \mul\G=\{0\};
\]
\item the extensions $V_1$ and $V_2$ of $V$ are closed, $V_1\subseteq  V_2^{-[*]}$ and $V_2\subseteq  V_1^{-[*]}$;

\item the following equivalences hold:
\[
 \mul V_2=\{0\}\Longleftrightarrow \mul \cU=\{0\};
\]
\[
 \ker V_1=\{0\}\Longleftrightarrow \ker \cU=\{0\};
\]
\[
 \mul V_2=\{0\} \Longleftrightarrow \ker V_1=\{0\}.
\]
If one of the sets appearing in the above equivalences is trivial, then the main transform $\cU$ from $\sH\times\sL_2$ to $\sH\times\sL_1$
of $\Gamma$ is a standard unitary operator.
\end{enumerate}
\end{proposition}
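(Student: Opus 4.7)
The plan is to organize everything around the main transform $\cU=\cJ(\G)$, which by Lemma~\ref{lem:Col_BP} is a closed unitary relation between the Pontryagin spaces $\sH\times\sL_2$ and $\sH\times\sL_1$ of the same negative index $\kappa_-(\sH)$; this makes Lemma~\ref{PontryIsom} and Lemma~\ref{PontryIsom2} directly applicable. I would handle the items in the order (i), (iii), (ii), (iv), (v), so that each step can exploit what has already been established.

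For (i), I would take a convergent sequence $\{\wh f_n,u_{1,n}\}\to\{\wh f,u_1\}$ in $\G_1$ with witnesses $u_{2,n}\in\sL_2$ and read the Green identity \eqref{E:1.1}, applied to the difference $\{\wh f_n-\wh f_m,(u_{1,n}-u_{1,m},u_{2,n}-u_{2,m})\}\in\G$, as
\[
 \|u_{2,n}-u_{2,m}\|_{\sL_2}^2=\|u_{1,n}-u_{1,m}\|_{\sL_1}^2-[f_n-f_m,f_n-f_m]_\sH+[f'_n-f'_m,f'_n-f'_m]_\sH.
\]
Since the Pontryagin Hilbert norm on $\sH$ dominates $|[\cdot,\cdot]_\sH|$ the right-hand side tends to $0$, so $u_{2,n}$ converges and closedness of $\G$ gives $\{\wh f,u_1\}\in\G_1$. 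For (iii), Proposition~\ref{UNIT}(ii) yields $\mul\G=(\ran\G)^{[\perp]}$; testing \eqref{E:1.1} on $(\wh 0,(v_1,v_2))\in\G$ against itself forces $\|v_1\|_{\sL_1}=\|v_2\|_{\sL_2}$, so $\mul\G$ is a neutral subspace of $(\sL,J_\sL)$ and the coordinate projections $P_{\sL_j}\colon\mul\G\to\mul\G_j$ are isometric bijections, immediately producing both the identity $\mul\G_j=P_{\sL_j}(\mul\G)$ and the three equivalences on triviality.

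The main obstacle is (ii). I would apply the same neutrality idea to $\mul\cU=(\ran\cU)^{[\perp]}$ in the Pontryagin space $\sH\times\sL_1$: any $(0,u)\in\mul\cU$ is neutral, which here means $\|u\|_{\sL_1}^2=-[0,0]_\sH=0$, so $\mul\cU\cap(\{0\}\times\sL_1)=\{0\}$. Taking Kre\u{\i}n orthogonal complements rewrites this as $(\ran\cU+\sH\times\{0\})^{[\perp]}=\{0\}$, i.e.\ density of the sum in $\sH\times\sL_1$. Closedness of the sum is the delicate point: Lemma~\ref{PontryIsom} gives $\ran\cU$ closed, and the classical criterion ``$A+B$ is closed iff $A^{[\perp]}+B^{[\perp]}$ is closed'' (valid since the Kre\u{\i}n topology coincides with the Pontryagin Hilbert topology via the fundamental symmetry) reduces closedness to that of $\mul\cU+\{0\}\times\sL_1$; but $\mul\cU$ is neutral in a Pontryagin space, hence finite-dimensional, so the sum of this finite-dimensional subspace with the closed subspace $\{0\}\times\sL_1$ is closed. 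Combining density with closedness yields $\ran\cU+\sH\times\{0\}=\sH\times\sL_1$, whose projection to $\sL_1$ is $P_{\sL_1}(\ran\cU)=\ran\G_1=\sL_1$; the identity $\ran\G_2=\sL_2$ follows symmetrically from $\dom\cU$.

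For (iv), $V_j=\ker\G_j$ is closed by~(i), the extension $V\subseteq V_j$ is visible from $V=\ker\G$, and the cross-adjoint inclusion $V_1\subseteq V_2^{-[*]}$ is a one-line application of \eqref{E:1.1} to witnesses $\{\wh f,(0,u_2)\}$ and $\{\wh g,(v_1,0)\}$ in $\G$, whose right-hand side vanishes. For (v) I translate via the main transform: $\mul V_2=P_\sH(\mul\cU)$ and $\ker V_1=P_\sH(\ker\cU)$, and the neutrality argument of (ii) applied inside $\sH\times\sL_1$ and $\sH\times\sL_2$ upgrades vanishing of the $\sH$-projection to vanishing of the whole set, giving the first two equivalences; the third equivalence $\mul V_2=\{0\}\Leftrightarrow\ker V_1=\{0\}$ together with the conclusion that $\cU$ is then a standard unitary operator is exactly Lemma~\ref{PontryIsom2}(v) and~(iv).
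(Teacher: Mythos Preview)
Your proof is correct and follows the same overall strategy as the paper (working through the main transform $\cU$ and Lemmas~\ref{PontryIsom},~\ref{PontryIsom2}), but with two genuine tactical differences worth noting.

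For (iii), the paper first proves (ii) and then uses $\ran\G_2=\sL_2$ to argue: if $\mul\G_1=\{0\}$ and $\{0,v_2\}\in\mul\G$, then $(u_2,v_2)_{\sL_2}=0$ for all $u_2\in\ran\G_2=\sL_2$, hence $v_2=0$. Your route via neutrality of $\mul\G$ (forcing $\|v_1\|=\|v_2\|$ and hence bijectivity of each $P_{\sL_j}$ on $\mul\G$) is more self-contained and lets you decouple (iii) from (ii).

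For (ii), both arguments pass through the main transform, but the paper observes that $\mul\cU=(\ran\cU)^{[\perp]}$ is neutral and hence finite-dimensional, so $\ran\cU$ has finite codimension in $\sH\times\sL_1$; it then projects a finite-dimensional complement $\sM$ to $\sL_1$ and argues closedness of $\ran\G_1$ from $\sL_1=\ran\G_1+P_1\sM$. Density is handled separately by the maximality clause in Definition~\ref{D:1}: if $v_1\perp\ran\G_1$, then $\{0,(v_1,0)\}$ satisfies \eqref{E:1.1} against all of $\G$, hence lies in $\G$, so $v_1\in\mul\G_1\subseteq\ran\G_1$ and $v_1=0$. Your route establishes the single equality $\ran\cU+\sH\times\{0\}=\sH\times\sL_1$ directly (density from $\mul\cU\cap(\{0\}\times\sL_1)=\{0\}$, closedness via the sum-of-complements criterion and $\dim\mul\cU<\infty$) and then projects; this is a clean packaging that avoids having to justify why a subspace of finite codimension is closed. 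Parts (i), (iv), (v) match the paper essentially verbatim.
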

\begin{proof}
(i) To prove that $\G_1$ is closed assume that $\{\wh f_n,\wh u_n\}\in \Gamma$ such that
$\wh f_n\to \wh f\in\sH^2$ and $u_{1,n}\to u_1\in\sL_1$. Then \eqref{E:1.1} gives
\[
 [f_n-f_m,f_n-f_m]_{{\sH}}-[f_n'-f_m',f_n'-f_m']_{{\sH}}=\|u_{1,n}-u_{1,m}\|^2_{{\sL}_1}-\|u_{2,n}-u_{2,m}\|^2_{{\sL}_2}
\]
and letting $n,m\to\infty$ one concludes that $\|u_{2,n}-u_{2,m}\|_{{\sL}_2}\to 0$. As a Cauchy sequence $(u_{2,n})$
converges to some element $u_2$ in $\sL_2$. This means that $\{\wh f_n,\wh u_n\}\to \{\wh f,\wh u\}$ and, since $\G$ is closed
as a unitary relation, $\{\wh f,\wh u\}\in \G$ and thus $\{\wh f,u_1\}\in\G_1$.
This proves that $\G_1$ is closed. Similarly one proves that $\G_2$ is closed.

(ii) First it is shown that $\ran \G_1$ is a closed subspace of $\sL_1$. For this consider
the main transform $\cU=\cJ(\G)$. By Lemma \ref{lem:Col_BP} $\cU$ is a unitary relation between the Pontryagin spaces
$\sH\times\sL_2$ and $\sH\times\sL_1$. Moreover, by Lemma \ref{PontryIsom} $\ran \cU$ is closed and Proposition \ref{UNIT}
shows that $\mul \cU$ is the isotropic part of $\ran \cU$. Therefore $\mul \cU$ is a closed finite dimensional
subspace of $\sH\times\sL_1$ and thus also the co-dimension $k$ of $\ran \cU$ is finite ($k\leq \kappa_-(\sH))$.
Let $\sM$ be any $k$-dimensional subspace such that $\ran\cU\dot{+}\sM=\sH\times\sL_1$ and let $P_1$ be the orthogonal projection
from $\sH\times\sL_1$ onto $\sL_1$. Then
\[
 \sL_1=P_1(\ran\cU\dot{+}\sM)=P_1\ran\cU + P_1\sM,
\]
and here $\dim P_1\sM\leq k$, which implies that $P_1\ran\cU=\ran\G_1$ is closed.

To see that $\ran \G_1=\sL_1$ it suffices to prove that $\ran \G_1$ is dense in $\sL_1$. For this assume that $v_1\perp \ran\G_1$.
Let $\wh g=\{0,0\}\in\sH^2$ and $\wh v=\{v_1,0\}\in\sL$. Then $\{\wh g,\wh v\}$ satisfies the identity \eqref{E:1.1}
for all $\{\wh f,\wh u\}\in\Gamma$, and hence assumption (2) in Definition \ref{D:1} implies that $\{\wh g,\wh v\}\in\G$.
This means that $\wh v\in\mul\G$ and then, in particular, $v_1\in\mul \G_1\subseteq  \ran\G_1$. Thus, $v_1=0$ and this proves
that $\ran \G_1=\sL_1$. The equality $\ran\G_2=\sL_2$ is then clear by symmetry.

(iii) The identities $\mul\G_j=P_{\sL_j}(\mul\G)$, $j=1,2$, are clear from the definition of $\G$.
Hence, $\mul\G=\{0\}$ implies that $\mul\G_1=\mul\G_2=\{0\}$.
Conversely, assume that e.g. $\mul\G_1=\{0\}$ and that $\wh v\in\mul\G$. Then $\wh v=\{0,v_2\}$
and hence for all $\{\wh f,\wh u\}\in\Gamma$ the identity \eqref{E:1.1} implies that
\[
 0=(u_1,0)_{\sL_1}-(u_2,v_2)_{\sL_2}=-(u_2,v_2)_{\sL_2}.
\]
In item (ii) it was shows that $\ran\G_2=\sL_2$ and, thus, one concludes that $v_2=0$.
Hence, $\mul\G=\{0\}$. Similarly, $\mul \G_2=\{0\}$ implies $\mul\G=\{0\}$.

(iv) Since $V_j=\ker \G_j$ and $\G_j$ is closed by item (i) also $V_j$ is closed $j=1,2$.
If $\wh f\in V_1$ and $\wh g\in V_2$ then it follows from \eqref{E:1.1} that $[\wh f,\wh g ]_{\sH^2}=0$
and in view of \eqref{e:invadj} this means that the inclusions $V_1\subseteq  V_2^{-[*]}$ and
$V_2\subseteq  V_1^{-[*]}$ hold; these inclusions are clearly equivalent to each other.

(v) The definition in \eqref{awig} shows that $$\mul V_2=P_\sH (\mul \cU).$$
Hence, $\mul \cU=\{0\}$ implies $\mul V_2=\{0\}$. Conversely, if $\{f',u_1\}\in\mul \cU$ then
$f'\in\mul V_2$ and if $\mul V_2=\{0\}$ then $f'=0$. Now \eqref{E:1.1} implies that $(u_1,u_1)_{\sL_1}=0$
and thus also $u_1=0$, i.e., $\mul \cU=\{0\}$.
The equivalence of $\ker \cU=\{0\}$ and $\ker V_1=\{0\}$ can be seen in the same way.
As to the last equivalence notice that $\kappa_-(\sH\times\sL_1)=\kappa_-(\sH)=\kappa_-(\sH\times\sL_2)$.
Now, according to item (v) in Lemma~\ref{PontryIsom2} $\ker \cU=\{0\}$ is equivalent to $\mul \cU=\{0\}$
and in this case $\cU$ is a standard unitary operator.
\end{proof}

Later it is shown that the inclusions in (iv) of Proposition \ref{P:G1G2} actually hold as equalities; see Theorem \ref{thm:General}.
In the special case that $\sH$ is a Hilbert space and $V$ is an isometry in $\sH$
Proposition~\ref{P:G1G2} can be specialized as follow.

\begin{corollary}\label{Cor:G1G2}
Let $({\sL},\G)$ be a unitary boundary pair for an isometric operator $V$ in the Hilbert space $\sH$.
Then the properties (i)--(iv) in Proposition~\ref{P:G1G2} hold and, moreover, $V_2$ and $V_1^{-1}$ are contractive operators.
Furthermore, the main transform $\cU$ from $\sH\times\sL_2$ to $\sH\times\sL_1$ of $\Gamma$ is a standard Hilbert space
unitary operator.
\end{corollary}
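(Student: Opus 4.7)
My plan is to observe that a Hilbert space is automatically a Pontryagin space with $\kappa_-(\sH)=0$, so that Proposition~\ref{P:G1G2} applies verbatim and yields items (i)--(iv). All the additional Hilbert-space conclusions should then follow from the fact that in $\sH$ no nonzero vector is neutral or negative, so the indefinite pathologies that can occur in the Pontryagin setting (nontrivial $\mul V_2$, nontrivial $\ker V_1$, or a nonstandard $\cU$) collapse to the trivial case.

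Concretely, for the contractivity of $V_2$ I would fix $\wh f\in V_2=\ker\G_2$ with boundary image $\wh u=(u_1,0)$ and substitute into the Green-type identity~\eqref{E:1.1} (taking $\wh g=\wh f$) to obtain
\[
 \|f\|_\sH^2-\|f'\|_\sH^2=\|u_1\|_{\sL_1}^2\geq 0,
\]
which is the desired contractivity estimate. To upgrade $V_2$ from a contractive relation to a contractive operator I would appeal to the remark preceding~\eqref{eq:A12}, that $\mul V_2\setminus\mul V$ consists of negative vectors of $\sH$: in the Hilbert case this forces $\mul V_2\subseteq\mul V$, and since the multivalued part of the isometric relation $V$ is neutral (cf.~\eqref{isome}), hence zero in $\sH$, one concludes $\mul V_2=\{0\}$. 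The symmetric argument applied to $V_1=\ker\G_1$ with $\wh u=(0,u_2)$ gives $\|f'\|_\sH^2-\|f\|_\sH^2=\|u_2\|_{\sL_2}^2\geq 0$, so $V_1$ is expanding, and the same reasoning shows $\ker V_1=\{0\}$; thus $V_1^{-1}$ is a contractive operator.

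For the main transform $\cU=\cJ(\G)$, Lemma~\ref{lem:Col_BP} provides that $\cU$ is a unitary linear relation between the Hilbert spaces $\sH\times\sL_2$ and $\sH\times\sL_1$. Since $\mul V_2=\{0\}$ has just been established, Proposition~\ref{P:G1G2}(v) gives $\mul\cU=\{0\}$, and the equality $\kappa_-(\sH\times\sL_2)=\kappa_-(\sH\times\sL_1)=0$ combined with Lemma~\ref{PontryIsom2}(v) then automatically yields $\ker\cU=\{0\}$ and the standardness of $\cU$ as a Hilbert-space unitary operator. There is no genuine obstacle here; the proof is essentially a bookkeeping specialization of the Pontryagin-space machinery already in place, and the only point that requires a moment's thought is verifying that both auxiliary spaces $\sH\times\sL_j$ indeed have negative index zero so that Lemma~\ref{PontryIsom2}(v) applies.
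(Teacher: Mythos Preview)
Your proposal is correct and follows essentially the same approach as the paper. The only cosmetic difference is that the paper argues $\mul V_2=\{0\}$ and $\ker V_1=\{0\}$ directly from the Green identity (setting $u_2=0$, $f=0$ forces $f'=0$; setting $u_1=0$, $f'=0$ forces $f=0$), whereas you route through the remark that $\mul V_2$ and $\ker V_1\setminus\ker V$ consist of negative vectors---but that remark is itself derived from the same identity, so the two arguments are equivalent. Your final appeal to Lemma~\ref{PontryIsom2}(v) is also already packaged inside the last sentence of Proposition~\ref{P:G1G2}(v), so you could shorten that step.
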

\begin{proof}
The fact that in the Hilbert space case $V_2$ and $V_1^{-1}$ are contractive operators follows from \eqref{E:1.1}
which with the choice $\wh f=\wh g$ and $\wh u=\wh v$ can be rewritten as
\[
 \|f\|_\sH^2-\|f'\|_\sH^2=\|u_1\|_{\sL_1}^2-\|u_2\|_{\sL_2}^2.
\]
In particular, with $u_1=0$ the condition $f'=0$ implies $f=0$ and hence $\ker V_1=\{0\}$. Similarly
with $u_2=0$ the condition $f=0$ implies $f'=0$ and thus $\mul V_2=\{0\}$.
According to item (v) in Proposition~\ref{P:G1G2} one has $\mul \cU=\{0\}$ and thus $\cU\in \mathbf{B}(\sH)$ is a standard
unitary operator.
\end{proof}

\subsection{The Weyl function and the $\gamma$-fields of a unitary boundary pair}
\label{subsec3.2}
Define the set $\sN_\l(V_*)$ as the intersection of $\sN_\l$ and $V_*$:
\[
\sN_\l(V_*):=\left\{f_\l:\begin{pmatrix}f_\l\\
\l f_\l\end{pmatrix}\in V_*\right\}\,\left(\subseteq  \sN_\l
=\left\{f_\l:\begin{pmatrix}f_\l\\
\l f_\l\end{pmatrix}\in V^{-\zx}\right\}\right)
\]
and the corresponding subset of $V_*$:
\[
\wh{\sN}_\l(V_*)=\left\{\wh{f_\l}=\begin{pmatrix}f_\l \\
\l f_\l\end{pmatrix}:\wh{f_\l}\in V_*\right\}.
\]
If $\widehat{f}_{\l}\in \widehat{\sN}_{\l}(V_*)$ and $\widehat{f}_{\mu}\in \widehat{\sN}_{\mu}(V_*)$ then for some $\widehat u,\widehat v\in {\sL}$ one has
\[
\left\{\widehat{f}_\l,
 \begin{pmatrix}
  u_1 \\ u_2
 \end{pmatrix}\right\}\in\G, \quad
 \left\{\widehat{f}_\mu,
 \begin{pmatrix}
  v_1 \\ v_2
 \end{pmatrix}\right\}\in\G
\]
and an application of \eqref{E:1.1} shows that
\begin{equation}\label{Eqgamma1}
 (1-\l\overline{\mu} )[f_{\l},f_{\mu}]_{\sH}=(u_1,v_1)_{\sL_1}-(u_2,v_2)_{\sL_2}.
\end{equation}

\begin{lemma}\label{lem:matrix_G}{\rm (cf.~\cite[Lemma~3.2]{DD09})}
Let $u_j$ belong to a Hilbert space ${\cH}$, let $\l_j\in\mathbb{C}$, and define the vector valued function $f_j$ by
\[
 f_j(\l):=\frac{u_j}{1-\l_j\l}, \quad j=1,\dots,n.
\]
Then the Gram matrix of the functions $f_j(\l)$ $(j=1,\dots,n)$
in the space $L_2({\cH})$ over the unit circle
\begin{enumerate}
\def\labelenumi{\rm (\roman{enumi})}
  \item is equal to
$G=\left(\frac{(u_j,u_k)_{{\cH}}}{1-\l_j\overline{\l}_k}\right)_{j,k=1}^n$, if $\l_j\in \dD$ $(j,k=1,\dots,n)$;
  \item is equal to
$-G=\left(-\frac{(u_j,u_k)_{{\cH}}}{1-\l_j\overline{\l}_k}\right)_{j,k=1}^n$, if $\l_j\in \dD_e$ $(j,k=1,\dots,n)$.
\end{enumerate}
\end{lemma}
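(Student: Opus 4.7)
The plan is to reduce the claim to a standard scalar Fourier/Laurent series computation on the unit circle. Writing $z=e^{i\theta}$, the $L_2(\cH)$-inner product factors as
\[
 (f_j,f_k)_{L_2(\cH)}=(u_j,u_k)_{\cH}\cdot I_{jk},\qquad
 I_{jk}:=\frac{1}{2\pi}\int_0^{2\pi}\frac{d\theta}{(1-\l_j e^{i\theta})(1-\overline{\l}_k e^{-i\theta})},
\]
so the entire task is to evaluate the scalar kernel $I_{jk}$ and verify that it equals $(1-\l_j\overline{\l}_k)^{-1}$ in case (i) and $-(1-\l_j\overline{\l}_k)^{-1}$ in case (ii).

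In case (i), with $|\l_j|,|\l_k|<1$, I would expand each factor as a geometric series that converges uniformly on $\dT$:
\[
 \frac{1}{1-\l_j z}=\sum_{m\ge 0}\l_j^m z^m,\qquad
 \frac{1}{1-\overline{\l}_k\bar z}=\sum_{n\ge 0}\overline{\l}_k^{\,n} z^{-n}.
\]
Multiplying and using the orthogonality relation $\frac{1}{2\pi}\int_0^{2\pi} z^{m-n}\,d\theta=\delta_{mn}$, only the diagonal terms survive and I obtain $I_{jk}=\sum_{m\ge 0}(\l_j\overline{\l}_k)^m=1/(1-\l_j\overline{\l}_k)$, yielding the matrix $G$.

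In case (ii), with $|\l_j|,|\l_k|>1$, the previous series diverge, so I would instead expand in the reciprocal variable:
\[
 \frac{1}{1-\l_j z}=-\frac{1}{\l_j z}\sum_{m\ge 0}(\l_j z)^{-m}=-\sum_{m\ge 1}\l_j^{-m}z^{-m},
\]
and analogously $(1-\overline{\l}_k\bar z)^{-1}=-\sum_{n\ge 1}\overline{\l}_k^{-n}z^{n}$. Again the integral selects $m=n$, giving
\[
 I_{jk}=\sum_{m\ge 1}(\l_j\overline{\l}_k)^{-m}=\frac{(\l_j\overline{\l}_k)^{-1}}{1-(\l_j\overline{\l}_k)^{-1}}
  =\frac{1}{\l_j\overline{\l}_k-1}=-\frac{1}{1-\l_j\overline{\l}_k},
\]
which produces the matrix $-G$.

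There is no serious obstacle here; the only mildly delicate point is justifying termwise integration, which is immediate because in each case the two Laurent series converge uniformly on $\dT$ under the respective hypotheses $|\l_j|<1$ or $|\l_j|>1$. (Equivalently, $I_{jk}$ may be computed by a single contour integral and a residue at $z=\l_j^{-1}$ in case (i), or at $z=\overline{\l}_k^{\,-1}$ in case (ii), giving the same answers with the opposite sign reflecting the change of orientation.)
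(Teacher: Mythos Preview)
Your proof is correct. Both you and the paper reduce the problem to evaluating the scalar integral
\[
 I_{jk}=\frac{1}{2\pi}\int_0^{2\pi}\frac{d\theta}{(1-\l_j e^{i\theta})(1-\overline{\l}_k e^{-i\theta})},
\]
but the methods of evaluation differ. The paper rewrites $I_{jk}$ as a contour integral $\frac{1}{2\pi i}\oint \frac{d\l}{(1-\l_j\l)(\l-\overline{\l}_k)}$ over the unit circle and picks up the residue at the single interior pole ($\overline{\l}_k$ in case~(i), $1/\l_j$ in case~(ii)), which immediately gives $\pm(1-\l_j\overline{\l}_k)^{-1}$. You instead expand each factor as a Laurent series converging uniformly on $\dT$ and invoke the orthogonality of the characters $z^m$; the diagonal sum $\sum(\l_j\overline{\l}_k)^{\pm m}$ yields the same answer. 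Your approach is slightly more elementary in that it avoids the residue theorem, while the paper's is a one-line computation once the contour integral is written down; you in fact mention the residue alternative in your closing parenthetical, so the two are really the same calculation viewed through Fourier versus Cauchy.
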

\begin{proof}
To determine the Gram matrix consider the inner product of the $\cH$-valued functions
$f_j(\lambda)$ and $f_k(\lambda)$ with $\l=e^{it}$, $t\in[0,2\pi]$, for $j,k=1,\dots,n$.

(i) If $\l_j\in \dD$ then in view of the equalities
\[
\begin{split}
\left(f_j,f_k\right)_{L_2({\cH})}
&=\frac{1}{2\pi}\int\limits_0^{2\pi}
\frac{dt}{(1-\l_je^{it})(1-\overline{\l}_k e^{-it})}(u_j,u_k)_{{\cH}}\\
&=\frac{1}{2\pi i}\oint
\frac{d\l}{(1-\l_j\l)(\l-\overline{\l}_k)}(u_j,u_k)_{{\cH}}=\frac{(u_j,u_k)_{{\cH}}}{{1-\l_j\overline{\l}_k}}
\end{split}
\]
the matrix $G$ coincides with the Gram matrix of the functions $f_j(\l)$ $(j=1,\dots,n)$
in the space $L_2({\cH})$ on the unit circle. 

(ii) Analogously, if $\l_j\in \dD_e$ $(j=1,\dots,n)$ then it follows from
\[
 \frac{1}{2\pi i}\oint
 \frac{d\l}{(1-\l_j\l)(\l-\overline{\l}_k)}=\frac{1}{2\pi i}\oint
 \frac{d\l}{-\l_j(\l-1/\l_j)(\l-\overline{\l}_k)}=-\frac{1}{{1-\l_j\overline{\l}_k}}
\]
that the matrix
G=$\left(\frac{(u_j,u_k)_{{\cH}}}{1-\l_j\overline{\l}_k}\right)_{j,k=1}^n$
differs in sign from the Gram matrix for functions $f_j(\l)$, $j=1,\dots,n$.
\end{proof}

\begin{proposition}\label{P:kappagamma}
Let $V$ be an isometric operator in a Pontryagin space ${\sH}$ and let $({\sL},\G)$ be a unitary boundary pair for $V$.
Then:
\begin{enumerate}
\def\labelenumi{\rm (\roman{enumi})}

\item
the set $(\sigma_p(V_2)\setminus \sigma_p(V))\cap\dD_e$ consists of at most $\k$ points and
the corresponding eigenspaces are negative;

\item
the set $(\sigma_p(V_1)\setminus \sigma_p(V))\cap\dD$ consists of at most $\k$ points and the corresponding eigenspaces are negative.
\end{enumerate}
\end{proposition}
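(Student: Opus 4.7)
The plan is to combine the Green identity \eqref{E:1.1} with the kernel identity \eqref{Eqgamma1} and Lemma~\ref{lem:matrix_G} to reduce each claim to the fact that in a Pontryagin space of negative index $\k$ every negative subspace is at most $\k$-dimensional.

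For (i), I would first fix $\l\in(\sigma_p(V_2)\setminus\sigma_p(V))\cap\dD_e$ and a nonzero eigenvector, so that $\wh f_\l=\{f_\l,\l f_\l\}\in V_2=\ker\G_2$ with $f_\l\neq 0$. By the definition of $\G_2$ there exists $u_1\in\sL_1$ with $\{\wh f_\l,\{u_1,0\}\}\in\G$, and substituting $\wh f=\wh g=\wh f_\l$ and $\wh u=\wh v=\{u_1,0\}$ into \eqref{E:1.1} yields
\[
 (1-|\l|^2)[f_\l,f_\l]_\sH=\|u_1\|^2_{\sL_1}\ge 0.
\]
Since $|\l|>1$ this forces $[f_\l,f_\l]_\sH\le 0$; equality would give $u_1=0$, hence $\wh u=0$ and $\wh f_\l\in\ker\G=V$, contradicting $f_\l\neq 0$ together with $\l\notin\sigma_p(V)$. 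Thus the eigenspace at $\l$ is negative, and any such $u_1$ is in fact nonzero.

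For the counting step I would take distinct $\l_1,\dots,\l_n\in(\sigma_p(V_2)\setminus\sigma_p(V))\cap\dD_e$ with nonzero eigenvectors $f_{\l_j}$ and corresponding $u_{1,j}\neq 0$. Specializing \eqref{Eqgamma1} to $u_{2,j}=u_{2,k}=0$ gives
\[
 [f_{\l_j},f_{\l_k}]_\sH=\frac{(u_{1,j},u_{1,k})_{\sL_1}}{1-\l_j\overline{\l_k}},\qquad j,k=1,\dots,n,
\]
and Lemma~\ref{lem:matrix_G}(ii) identifies the right-hand $n\times n$ matrix with the negative of the Gram matrix in $L_2(\sL_1)$ over $\dT$ of the functions $z\mapsto u_{1,j}/(1-\l_j z)$. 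Because the points $1/\l_j$ are pairwise distinct and each $u_{1,j}$ is nonzero, these functions are linearly independent in $L_2(\sL_1)$, so their Gram matrix is strictly positive definite; hence $\bigl([f_{\l_j},f_{\l_k}]_\sH\bigr)_{j,k=1}^n$ is strictly negative definite. In particular $f_{\l_1},\dots,f_{\l_n}$ are linearly independent and span a negative $n$-dimensional subspace of $\sH$, so $\kappa_-(\sH)=\k$ forces $n\le\k$.

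Claim (ii) I would prove in parallel, exchanging $V_2$ with $V_1$ and $\dD_e$ with $\dD$. For $\wh f_\l\in V_1=\ker\G_1$ one gets boundary data of the form $\{0,u_2\}\in\sL$, so \eqref{E:1.1} now reads $(1-|\l|^2)[f_\l,f_\l]_\sH=-\|u_2\|^2_{\sL_2}$; since $|\l|<1$ one again obtains $[f_\l,f_\l]_\sH\le 0$ with strictness by the same contradiction, while \eqref{Eqgamma1} combined with Lemma~\ref{lem:matrix_G}(i) supplies a strictly negative definite Gram matrix, capping the count by $\k$. The only delicate point in the whole argument is extracting strict negativity of $[f_\l,f_\l]_\sH$ from the Green identity, which is precisely where the hypothesis $\l\notin\sigma_p(V)$ is used; once that is in hand, the bound $n\le\k$ is automatic from Lemma~\ref{lem:matrix_G} and the standard dimension bound on negative subspaces in the Pontryagin space $\sH$.
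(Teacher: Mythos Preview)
Your proof is correct and follows essentially the same approach as the paper: both rely on the kernel identity \eqref{Eqgamma1} together with Lemma~\ref{lem:matrix_G} to produce a negative definite Gram matrix and then invoke the dimension bound on negative subspaces in a Pontryagin space of index~$\k$. The only organizational difference is that you first prove negativity of each eigenspace directly from \eqref{E:1.1} with a single vector and then count distinct eigenvalues, whereas the paper treats both conclusions simultaneously by allowing the $\l_j$ to repeat (with linearly independent eigenvectors, forcing the corresponding $u_{1,j}$ to be linearly independent since $\l_j\notin\sigma_p(V)$); either way the Gram matrix argument via Lemma~\ref{lem:matrix_G} does the work.
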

\begin{proof}
(i) A point $\l\in\dD_e$ belongs to the set $\sigma_p(V_2)$ if and only if
there exists $\widehat{f}_{\l}\in \widehat{\sN}_{\l}(V_*)$, ${f}_{\l}\neq 0$, such that
$\left\{\widehat{f}_\l,
\begin{pmatrix}
  u_1 \\ 0
\end{pmatrix}\right\}\in\G$ for some $u_1\in {\sL}_1$.
The assumption $\l\not\in\sigma_p(V)$ means that $u_1\ne 0$.

Now assume that $\l_1,\dots,\l_{\k+1}\in \mathbb{D}_e$ and for some linearly independent vectors $\widehat{f}_{\l_j}\in \widehat{\sN}_{\l_j}(V_*)\setminus\{0\}$ and some $u_{1,j}\in{\sL}_1\setminus\{0\}$,
$$\left\{\widehat{f}_{\l_j},
\begin{pmatrix}
  u_{1,j} \\ 0
\end{pmatrix}
\right\}\in\G,\quad j=1,\dots,\k+1.$$
Then from \eqref{Eqgamma1} one gets
$$
[f_{\l_j},f_{\l_k}]_{\sH}=\frac{(u_{1,j},u_{1,k})_{\sL_1}}{{1-\l_j\overline{\l}_k}},
\quad j,k=1,\dots,\k+1.
$$
If $\lambda_j=\lambda_k$ for some $j\neq k$ then the vectors $u_j$ and $u_k$ are linearly independent
by the assumptions $\lambda_j\not\in\sigma_p(V)$. On the other hand, if $\lambda_j\neq \lambda_k$, then the vector functions
$f_j(\lambda)$ and $f_k(\lambda)$ defined in Lemma \ref{lem:matrix_G} are also linearly independent. Hence the matrix
$G$ in Lemma \ref{lem:matrix_G} is invertible. One concludes that the form
$$
\sum\limits_{j,k=1}^{\k+1}[f_{\l_j},f_{\l_k}]_{\sH}\xi_j\overline{\xi}_k=
\left[\sum\limits_{j=1}^{\k+1}\xi_jf_{\l_j},\sum\limits_{k=1}^{\k+1}\xi_kf_{\l_k}\right]_{\sH}
$$
is negative for linearly independent vectors $\widehat{f}_{\l_j}$ when $\l_j\in \mathbb{D}_e$.
This contradicts the assumption that the Pontryagin space
${\sH}$ has negative index $\kappa$.

(ii) The second statement is proved analogously.
\end{proof}

In the sequel the following two subsets of $\dD$ and $\dD_e$ (cf. \eqref{DDe}) will often appear:
\begin{equation}\label{eq:cD}
  \cD:=\dD\backslash\s_p(V_1),\quad
  \cD_e:=\dD_e\backslash\s_p(V_2).
\end{equation}
It should be noted that for various realization results and for the study of proper extensions of the isometry $V$ it is typically sufficient
to assume that $\sigma_p(V)=\emptyset$; this is the case in particular when $V$ is a simple isometric operator in $\sH$.
In this case Proposition~\ref{P:kappagamma} shows that both of the sets $\sigma_p(V_2)\cap\dD_e$ and $\sigma_p(V_1)\cap\dD$
contain at most $\k$ points.

Now consider the restrictions of (the graphs of) $\G_1$ and $\G_2$ to $\wh{\sN}_\l(V_*)$,
\[
 \G_1\upharpoonright\wh{\sN}_\l(V_*):=\left\{\left\{\wh{f},\begin{pmatrix}
      u_1 \\
      0
    \end{pmatrix}\right\}\in\G_1:
                                   \wh{f}\in\wh{\sN}_\l(V_*),\,
                                   u_1\in\sL_1
                                 \right\}
\]
and
\[
 \G_2\upharpoonright\wh{\sN}_\l(V_*):=\left\{\left\{\wh{f},\begin{pmatrix}
      0 \\
      u_2
    \end{pmatrix}\right\}\in\G_2:
                                   \wh{f}\in\wh{\sN}_\l(V_*),\,
                                   u_2\in\sL_2
                                 \right\}.
\]
It follows from \eqref{eq:cD} that
\[
 \ker\left(\G_1\upharpoonright\wh{\sN}_\l(V_*)\right)=\{0\} \quad  (\l\in\cD);\quad
  \ker\left(\G_2\upharpoonright\wh{\sN}_\l(V_*)\right)=\{0\} \quad  (\l\in\cD_e)
\]
and the assumption $\sigma_p(V)=\emptyset$ guarantees in particular that
the inverses
\begin{equation}\label{E:g12}
\wh{\g}_1(\l)=\sk({\G_1\upharpoonright\wh{\sN}_\l(V_*)})^{-1}\; (\l\in\cD);\quad
\wh{\g}_2(\l)=\sk({\G_2\upharpoonright\wh{\sN}_\l(V_*)})^{-1}\; (\l\in\cD_e)
\end{equation}
determine single-valued operator functions, which will be denoted by the same symbols
\[
\wh{\g}_1(\l):{\sL}_1\rightarrow\wh{\sN}_\l(V_*) \quad (\l\in\cD);\qquad
 \wh{\g}_2(\l):{\sL}_2\rightarrow\wh{\sN}_\l(V_*)\quad
(\l\in\cD_e).
\]

Let $\pi_1$ and $\pi_2$ be projections onto $\sL_1$ and $\sL_2$ in $\sL$, respectively.

\begin{definition}\label{def:gamma}
The operator functions
\begin{equation}\label{E:g122}
\g_1(\l)=\pi_1\wh{\g}_1(\l)\quad (\l\in\cD)
\quad\textup{and}\quad
\g_2(\l)=\pi_1\wh{\g}_2(\l)\quad (\l\in\cD_e)
\end{equation}
will be called the $\gamma$-fields of the  unitary boundary pair $({\sL},\G)$.
\end{definition}

The definition of the $\gamma$-fields of the unitary boundary pair $({\sL},\G)$ yields
the following explicit formulas:
  \begin{equation}\label{D:G1}
 \left\{\begin{pmatrix}
      f_\l \\
      \l f_\l
    \end{pmatrix},\begin{pmatrix}
      u_1 \\
      u_2
    \end{pmatrix}\right\}\in \Gamma, \quad \wh f_\l \in\wh\sN_\l(V_*),\,\; \l\in\cD\quad\Longrightarrow\quad
 \g_1(\l)u_1=    f_\l
  \end{equation}
  \begin{equation}\label{D:G2}
 \left\{\begin{pmatrix}
    f_\l \\
     \l f_\l
    \end{pmatrix},\begin{pmatrix}
      u_1 \\
      u_2
    \end{pmatrix}\right\}\in \Gamma,  \quad \wh f_\l \in\wh\sN_\l(V_*),\,\; \l\in\cD_e\quad\Longrightarrow\quad
    \g_2(\l)u_2=   f_\l.
  \end{equation}

Later it is shown that $\g_1(\l)$ and $\g_2(\l)$ are bounded everywhere defined operators and holomorphic in $\l$;
see Theorem \ref{thm:General}.

\begin{definition}\label{char}
  The family of linear relations defined by
  \begin{equation}\label{D:M1}
    \Theta(\l)=
    \left\{{\begin{pmatrix}
      u_1 \\
      u_2
    \end{pmatrix}}:\, \left\{\wh f_\l,\begin{pmatrix}
      u_1 \\
      u_2
    \end{pmatrix}\right\}\in \Gamma,\,\wh f_\l \in\wh\sN_\l(V_*)\right\},\quad\l\in \cD,
  \end{equation}
will be called the Weyl family of $V$ corresponding to the unitary boundary pair $({\sL},\G)$, or, briefly,
the  Weyl family of the unitary boundary pair $({\sL},\G)$.
\end{definition}

In Theorem~\ref{thm:General} it will be shown that the formula~\eqref{D:M1} determines a single-valued operator function, which is called the Weyl function of $V$ corresponding to the boundary triple $(\sL,\Gamma)$. If the mapping $\Gamma$  is single-valued, then the Weyl function  $\Theta(\l)$ can be defined by using the $\g$-fields
\begin{equation}\label{MG2}
                \Theta(\l)=\G_2\wh{\g}_1(\l), \quad \l\in\cD.
\end{equation}

\subsection{Unitary boundary pairs and unitary colligations}
\label{subsec3.3}

In the present section we consider a unitary boundary pair whose main transform is a unitary colligation
\begin{equation}\label{eq:U}
\Delta=({\sH},{\sL}_1,{\sL}_2, U),\quad
U=\left(%
\begin{array}{cc}
  T & F \\
  G & H \\
\end{array}%
\right):\begin{pmatrix} {\sH} \\{\sL}_1
\end{pmatrix}\rightarrow\begin{pmatrix} {\sH} \\{\sL}_2
\end{pmatrix}
\end{equation}
and write explicit formulas for all the objects connected with this unitary boundary pair in terms of the blocks of $U$.

\begin{theorem}\label{T:UV}
Let  $\Delta=({\sH},{\sL}_1,{\sL}_2, U)$  be a unitary colligation of the form~\eqref{eq:U}, let $\cU=\text{gr }U^{[*]}$, let $\Gamma=\cJ^{-1}(\cU)$ be a unitary relation from $({\sH}^2, J_{{\sH}^2})$ to $({\sL}, J_{{\sL}})$, i.e. $\cU=\cJ(\Gamma)$ as in Lemma~\ref{lem:Col_BP}, and let
\[
V:=\ker \Gamma,\quad V_*:=\dom \Gamma,\quad V_1:=\ker \Gamma_1,\quad V_2:=\ker \Gamma_2.
\]
Then the following statements hold:
\begin{enumerate}
\def\labelenumi{\rm (\roman{enumi})}

\item The pair $({\sL},\Gamma)$ is a unitary boundary pair for $V$.

\item The unitary relation $\Gamma$ admits the representations
\begin{equation}\label{E:Gstar}
\begin{split}
 \G & =\left\{\left\{\begin{pmatrix} Th+Fu_1\\ h \end{pmatrix},
         \begin{pmatrix} u_1\\Gh+Hu_1\end{pmatrix}\right\}:
        \begin{array}{c} h\in\sH \\ u_1\in {{\sL}_1} \end{array}\right\} \\[3mm]
    & =\left\{\left\{\begin{pmatrix} g\\T^{[*]}g+G^{[*]}u_2 \end{pmatrix},
    \begin{pmatrix} F^{[*]}g+H^*u_2\\u_2\end{pmatrix}\right\}:\,
    \begin{array}{c} g\in\sH \\ u_2\in {{\sL}_2} \end{array}\right\}.
\end{split}
\end{equation}
\item The isometry $V$ in $({\sH},J_\sH)$ admits the representations
\begin{equation}\label{eq:VV}
    V^{-1}=T\upharpoonright \ker G,\quad V=T^{[*]}\upharpoonright \ker F^{[*]}
\end{equation}
and the linear relation $V_*$ takes the form
\begin{equation}\label{E:V_star}
V_*=\left\{\begin{pmatrix} Th+Fu_1\\h\end{pmatrix}:\,
               \begin{array}{c}
                 h\in\sH \\
                 u_1\in {{\sL}_1}
               \end{array}
     \right\}
   = \left\{\begin{pmatrix} g\\T^{[*]}g+G^{[*]}u_2 \end{pmatrix}:\,
               \begin{array}{c}
                 g\in\sH \\
                 u_2\in {{\sL}_2}
               \end{array} \right\}.
\end{equation}

\item The multivalued part of $\Gamma$ has the representations
\begin{equation}\label{eq:mulV*}
    \mul \Gamma=\left\{\begin{pmatrix} u_1\\Hu_1 \end{pmatrix}: u_1\in \ker F \right\}
  =\left\{\begin{pmatrix}  H^*u_2\\u_2 \end{pmatrix}: v\in \ker G^{[*]} \right\}.
\end{equation}

\item The linear relations $V_1$ and $V_2$ are given by
\begin{equation}\label{E:V_12}
V_1=\left\{\begin{pmatrix} Th \\h
\end{pmatrix}: h\in {\sH}\right\}, \quad
V_2=\left\{\begin{pmatrix} g \\T^{[*]}g
\end{pmatrix}: g\in {\sH}\right\},
\end{equation}
and hence the sets $\cD:=\rho(V_1)\cap \dD$ and  $\cD_e:=\rho(V_2)\cap \dD_e$
are nonempty, they
coincide with the sets in \eqref{eq:cD} and are connected by
\begin{equation}\label{eq:DD_e}
\cD_e=\cD^\circ:=\{\lambda\in\dD_e:{1}/{\overline{\lambda}}\in\cD\},
\end{equation}
moreover, $\card(\dD\setminus\cD)=\card(\dD_e\setminus\cD_e)\le\kappa$.
\end{enumerate}
\end{theorem}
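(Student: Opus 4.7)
The strategy is to compute $\G = \cJ^{-1}(\cU) = \cJ^{-1}(\text{gr } U^{[*]})$ in two equivalent parametrizations and then read off every assertion in (i)--(v) by direct substitution. Since $U$ is a standard unitary operator, $U^{-1} = U^{[*]}$, and hence $\cU = \text{gr } U^{[*]}$ admits the two descriptions $\{(Ux,x):x\in\sH\oplus\sL_1\}$ and $\{(y,U^{[*]}y):y\in\sH\oplus\sL_2\}$. Inserting the block forms of $U$ and $U^{[*]}$ and applying $\cJ^{-1}$ (the inverse of \eqref{awig}) produces exactly the two representations of $\G$ in \eqref{E:Gstar}. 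Lemma~\ref{lem:Col_BP} then transfers unitarity from $\cU$ to $\G$, and since $V = \ker \G$ by hypothesis, this gives (i); single-valuedness of $V$ follows from comparing the two parametrizations of $\G$, combined with the colligation identity $T^{[*]} T + G^{[*]} G = I_\sH$ from \eqref{E:komp}.

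Parts (ii)--(iv) are routine substitutions into \eqref{E:Gstar}. Setting $\hat u = 0$ in the first form forces $u_1=0$ and $Gh=0$, giving $V^{-1} = T\upharpoonright\ker G$; the second form with $u_2=0$ forces $F^{[*]}g=0$, giving $V = T^{[*]}\upharpoonright\ker F^{[*]}$ and establishing \eqref{eq:VV}. Dropping the $\sL$-component of \eqref{E:Gstar} yields $V_*=\dom \G$ as in \eqref{E:V_star}. For the multivalued part \eqref{eq:mulV*}, set $\hat f=0$ in the first form: this forces $h=0$ and $Fu_1=0$, with image $(u_1,Hu_1)$ and $u_1\in\ker F$; the second form gives the dual expression with $u_2\in\ker G^{[*]}$. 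In (v), the projected relations $\G_1$ and $\G_2$ inherit the same two parametrizations from \eqref{E:Gstar}, so $V_1 = \ker \G_1$ and $V_2 = \ker \G_2$ are obtained by setting $u_1=0$ (resp.\ $u_2=0$), producing \eqref{E:V_12}: $V_1$ is the graph of the relation $T^{-1}$ and $V_2 = T^{[*]}$ as an operator.

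The only nontrivial part is the spectral content at the end of (v). Since $T\in\mathbf{B}(\sH)$, a Neumann-series argument shows that $I-\lambda T$ is boundedly invertible whenever $|\lambda|<1/\|T\|$, so $\rho(V_1)\cap\dD$ contains a neighborhood of $0$ and $\rho(V_2)\cap\dD_e$ contains the complement of a sufficiently large disk; in particular both are nonempty. Since $T^{[*]} = J_\sH T^* J_\sH$, one has $\sigma(T^{[*]})=\overline{\sigma(T)}$, whence $\lambda\in\rho(V_2)\cap\dD_e$ iff $\bar\lambda\in\rho(T)\cap\dD_e$ iff $1/\bar\lambda\in\rho(V_1)\cap\dD$; this is the correspondence $\cD_e=\cD^\circ$ in \eqref{eq:DD_e}. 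The main technical obstacle is identifying $\rho(V_1)\cap\dD$ with $\dD\setminus\sigma_p(V_1)$ (and analogously for $V_2$ on $\dD_e$), i.e., ruling out continuous and residual spectrum inside the respective disks. This follows from standard Pontryagin-space spectral theory (see e.g.~\cite{IKL}) for the expanding/contracting extension $V_1$ (resp.\ $V_2$): outside $\dT$ the spectrum of such a relation is exhausted by its point spectrum, and Proposition~\ref{P:kappagamma} bounds the latter by $\kappa$ points in each of $\dD$ and $\dD_e$. The equality of cardinalities $\card(\dD\setminus\cD)=\card(\dD_e\setminus\cD_e)\le\kappa$ then follows from $\cD_e=\cD^\circ$.
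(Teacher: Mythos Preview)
Your proof is correct and follows essentially the same route as the paper: derive the two parametrizations \eqref{E:Gstar} of $\G$ from $\cU=\text{gr }U^{-1}=\text{gr }U^{[*]}$, invoke Lemma~\ref{lem:Col_BP} for unitarity, and then read off (ii)--(v) by substitution.

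The only notable difference is in the spectral part of (v). The paper goes directly: $V_2=T^{[*]}$ is a bounded contraction in the Pontryagin space $(\sH,J_\sH)$, and \cite[p.~91, Lemma~11.8]{IKL} says such an operator has spectrum in $\dD_e$ consisting of at most $\kappa$ eigenvalues. This single citation simultaneously yields nonemptiness of $\cD_e$, the identification $\cD_e=\dD_e\setminus\sigma_p(V_2)$, and the bound $\card(\dD_e\setminus\cD_e)\le\kappa$; the relation $\cD=\cD_e^\circ$ then follows from $\rho(T^{-1})\cap\dD=(\rho(T^{[*]})\cap\dD_e)^\circ$. Your Neumann-series step is thus unnecessary, and your appeal to Proposition~\ref{P:kappagamma} is slightly misaimed: that proposition bounds $(\sigma_p(V_1)\setminus\sigma_p(V))\cap\dD$, not $\sigma_p(V_1)\cap\dD$, so a small additional remark would be needed---but since the \cite{IKL} result you already invoke gives the $\kappa$-bound outright, Proposition~\ref{P:kappagamma} can simply be dropped.
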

\begin{proof}
(i) By Lemma~\ref{lem:Col_BP} $\Gamma=\cJ^{-1}\cU$ is a unitary relation from $({\sH}^2, J_{{\sH}^2})$ to $({\sL}, J_{{\sL}})$.
This means that the assumptions (1), (2) of Definition \ref{D:1} are satisfied and, therefore,
$({\sL},\Gamma)$ is a unitary boundary pair for the isometry $V=\ker\G$ (which is an operator in view of (3) below).

(ii) Since the operator $U:\begin{pmatrix}
  \sH\\
  \sL_1
\end{pmatrix}\to\begin{pmatrix}
  \sH\\
  \sL_2
\end{pmatrix}$ is unitary then $\mathcal{U}=\gr U^{[*]}$ is also the graph of the operator $U^{-1}$ in \eqref{eq:U}, and hence $\mathcal{U}$ has the following representations
$$
\mathcal{U}=\gr U^{-1}=\sk\{{\sk\{{
\begin{pmatrix}
  Th+Fu_1\\
  Gh+Hu_1
\end{pmatrix},
\begin{pmatrix}
  h\\
  u_1
\end{pmatrix}}\}:\,\begin{array}{c}
  h\in\sH\\
  u_1\in\sL_1
\end{array}}\}.
$$
In view of \eqref{awig} this yields the first formula in \eqref{E:Gstar}.
The equality
$$
\mathcal{U}=\text{gr }{U^{[*]}}=\sk\{{\left\{
\begin{pmatrix}
  g\\
  u_2
\end{pmatrix},
\begin{pmatrix}
  T^{[*]}g+G^{[*]}u_2\\
  F^{[*]}g+H^{[*]}u_2
\end{pmatrix}\right\}:\,\begin{array}{c}
  g\in\sH\\
  u_2\in\sL_2
\end{array}}\}.
$$
leads to the second representation of $\G$ in \eqref{E:Gstar}.

(iii)\&(iv) The formulas \eqref{eq:VV}, \eqref{E:V_star}, and \eqref{eq:mulV*} are all implied by \eqref{E:Gstar}.

(v) The formulas \eqref{E:V_12} for $V_1=\ker\G_1$ and $V_2=\ker\G_2$ are again obtained from \eqref{E:Gstar}.
In particular, $V_2$ is the graph of the bounded operator $T^{[*]}$. It is closed and $\cD_e=\r(V_2)=\r(T^{[*]})\neq \emptyset$.
Since $T^{[*]}$ is a contractive operator in the Pontryagin space $(\sH, J_\sH)$, its spectrum in $\mathbb{D}_e$ consists of at most $\k$ eigenvalues, so
\[
\card(\dD_e\backslash\cD_e)=\card(\dD_e\backslash\r(T^{[*]}))\le\k,
\]
see e.g. \cite[p.91 Lemma 11.8]{IKL}.
Similarly, $V_1^{-1}$ is the graph of the contractive operator $T$ in the Pontryagin space $(\sH, J_\sH)$. Therefore,
\[
\cD=\rho(V_1)\cap \dD=\rho(T^{-1})\cap \dD=(\rho(T^{[*]})\cap \dD_e)^\circ=\cD_e^\circ.
\]
This completes the proof.
\end{proof}

By Proposition~\ref{cor:ker_G} the closure of $V_*$ is $V^{-[*]}$.
Hence, by taking closures in \eqref{E:V_star} one arrives at the following representations for $V^{-[*]}$:
\begin{equation}\label{eq:Vminstar}
V^{-[*]}=\sk\{{\begin{pmatrix}
    Th+f\\ h
\end{pmatrix}:\,\begin{array}{c}
  h\in\sH\\
  f\in\sH[-]\ker F^{[*]}
\end{array}}\},
\end{equation}
and
\begin{equation}\label{eq:Vminstar2}
V^{-[*]}=\sk\{{\begin{pmatrix}
  g\\ T^{[*]}g+f
\end{pmatrix}:\,\begin{array}{c}
  g\in\sH\\
  f\in\sH[-]\ker G
\end{array}}\}.
\end{equation}

\begin{theorem}\label{T:UV2}
Let  $\Delta=({\sH},{\sL}_1,{\sL}_2, U)$  be a unitary colligation of the form~\eqref{eq:U}, let $\cU=\text{gr }U^{[*]}$, let $\Gamma=\cJ^{-1}U$ and let  the corresponding  Weyl function $\Theta(\lambda)$ of $V$ and the $\gamma$-fields  $\gamma_1(\lambda)$ and $\gamma_2(\lambda)$ be given by~\eqref{D:M1} and~\eqref{D:G1}, \eqref{D:G2}.
Then the following statements hold:
\begin{enumerate}
\def\labelenumi{\rm (\roman{enumi})}

\item The defect subspace $\sN_\l(V_*)$ takes the form
 \begin{equation}\label{eq:sNl}
\sN_\l(V_*)=\{(I-\l T)^{-1}Fu_1:\,u_1 \in {{\sL}_1}\},\quad \l\in\cD;
 \end{equation}
 \begin{equation}\label{eq:sNle}
\sN_\l(V_*)=\{(\l I- T^{[*]})^{-1}G^{[*]}u_2:\, u_2 \in {{\sL}_2}\}, \quad \lambda\in \cD_e.
 \end{equation}

\item
$\Gamma_1(\wh\sN_\lambda(V_*))={\sL}_1$, $\l\in\cD$, and $\Gamma_2(\wh\sN_\lambda(V_*))={\sL}_2$, $\l\in\cD_e$,
and the $\gamma$-fields $\gamma_1(\lambda)$ and $\gamma_2(\lambda)$ take the form
$$
\gamma_1(\lambda)=(I-\l T)^{-1}F,\quad \l\in\cD;
$$
$$
\gamma_2(\lambda)=(\l I-T^{[*]})^{-1}G^{[*]}, \quad \lambda\in \cD_e.
$$

\item The Weyl function $\Theta(\lambda)$ of $V$ corresponding to the unitary boundary pair $({\sL},\Gamma)$
coincides with the characteristic function of the colligation $\Delta$, i.e.,
$$
{\Theta}(\l)=\Theta_\Delta(\lambda)=H+\l G(I-\l T)^{-1}F,\quad \l\in\cD.
$$
\item The unitary colligation  $\Delta$ is closely connected if and only if the operator $V$ is simple.
\end{enumerate}
\end{theorem}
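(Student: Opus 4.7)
The plan is to identify
\[
\sS_\Delta:=\cspan\{\ran(T^mF),\,\ran((T^{[*]})^nG^{[*]}):m,n\ge 0\}
\]
with the closed linear span of the defect subspaces $\sN_\l$ of $V$, and then to dispatch the no-off-circle-eigenvalues condition by a direct colligation computation. Using the formulas $\sN_\l(V_*)=(I-\l T)^{-1}F\sL_1$ for $\l\in\cD$ and $\sN_\l(V_*)=(\l I-T^{[*]})^{-1}G^{[*]}\sL_2$ for $\l\in\cD_e$ from item~(i), Taylor expanding around $\l=0$ and $\l=\infty$, and exploiting that an $\sH$-valued holomorphic function whose Taylor coefficients lie in a closed subspace takes its values in that subspace, I expect to obtain $\sS_\Delta=\cspan\{\sN_\l(V_*):\l\in\cD\cup\cD_e\}$. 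In the Pontryagin setting $(\ker F^{[*]})^{[\perp]}=\overline{\ran F}$; combined with~\eqref{eq:Vminstar},~\eqref{eq:Vminstar2} and the bijectivity of $I-\l T$ on $\sH$ for $\l\in\cD$ (item~(v) of Theorem~\ref{T:UV}), this upgrades to $\sS_\Delta=\cspan\{\sN_\l:\l\in\cD\cup\cD_e\}$.

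For the direction ``closely connected $\Rightarrow$ simple'', assume $\sS_\Delta=\sH$; the span condition in Definition~\ref{Def:simple} is then immediate since $\cD\cup\cD_e\subseteq\dD\cup\dD_e$. To preclude $\sigma_p(V)\setminus\dT$, suppose $Vg=\l g$ with $g\neq 0$ and $|\l|\neq 1$. By~\eqref{eq:VV}, $g\in\ker F^{[*]}$ and $T^{[*]}g=\l g$; the colligation identities $TT^{[*]}+FF^{[*]}=I$ and $GT^{[*]}+HF^{[*]}=0$ force $Tg=g/\l$ (the case $\l=0$ being excluded since $TT^{[*]}g=g$ then gives $g=0$) and $Gg=0$. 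A short computation using these equalities yields $[g,T^mFu_1]=0$ and $[g,(T^{[*]})^nG^{[*]}u_2]=0$ for all $m,n,u_1,u_2$, hence $g\,[\perp]\,\sS_\Delta=\sH$ and $g=0$, a contradiction.

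For the reverse direction I argue by contraposition. Assume $\sS_\Delta\subsetneq\sH$; by nondegeneracy of $\sH$, $\sM:=\sS_\Delta^{[\perp]}\neq\{0\}$. Using the identities $TG^{[*]}=-FH^*$ and $TT^{[*]}=I-FF^{[*]}$ one checks that $\sS_\Delta$ is invariant under both $T$ and $T^{[*]}$, hence so is $\sM$; the orthogonality relations $\sM\,[\perp]\,F\sL_1$ and $\sM\,[\perp]\,G^{[*]}\sL_2$ give $\sM\subseteq\ker F^{[*]}\cap\ker G$. Thus $V|_\sM=T^{[*]}|_\sM$ is a bijective isometric operator of $\sM$. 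Simplicity of $V$ forces $\sigma_p(V|_\sM)\subseteq\dT$; invoking the Kre\u{\i}n--Langer spectral theory for isometric operators in Pontryagin spaces (cf.~\cite{IKL}) this upgrades to $\sigma(V|_\sM)\subseteq\dT$, so $I-\overline{\l}V|_\sM$ is boundedly invertible on $\sM$ for every $\l\in\dD\cup\dD_e$. Consequently every $m\in\sM$ lies in $\ran(I-\overline{\l}V)$, whence $\sM\,[\perp]\,\sN_\l$ for all $\l\in\dD\cup\dD_e$; combined with the simplicity span condition, $\sM\,[\perp]\,\sH$ and $\sM=\{0\}$, contradicting $\sS_\Delta\subsetneq\sH$.

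The hard part will be the spectral step in the previous paragraph: $\sM$ need not be a nondegenerate subspace, so $V|_\sM$ acts on a space that is only Kre\u{\i}n-like. I plan to resolve this by passing to the quotient $\sM/\sM_0$, where $\sM_0=\sM\cap\sM^{[\perp]}$ is the (finite-dimensional) isotropic part and is invariant under $V|_\sM$ by its bijective isometry property; the quotient is a Pontryagin space on which the induced isometry has spectrum in $\dT$ by simplicity of $V$, while on the finite-dimensional $\sM_0$ the operator $I-\overline{\l}V|_{\sM_0}$ is invertible because the eigenvalues of $V|_{\sM_0}$ (inherited from $V|_\sM\subseteq\dT$) avoid $1/\overline{\l}\in\dD\cup\dD_e$.
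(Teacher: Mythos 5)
Your proposal addresses only item (iv); items (i)--(iii), which you invoke as inputs, are themselves part of the statement. In the paper they are obtained by short direct computations from the representation \eqref{E:Gstar} of $\Gamma$: the condition $\wh f_\l\in\wh\sN_\l(V_*)$ forces $h_\l=\l(Th_\l+Fu_1)$, whence \eqref{eq:sNl} and the formula for $\gamma_1(\l)$, and (iii) then drops out of the resulting element \eqref{eq:3.23A} of $\Gamma$. You should at least record these computations, since your argument for (iv) depends on them. For (iv) itself, your identification of $\cspan\{\ran(T^mF),\,\ran((T^{[*]})^nG^{[*]})\}$ with $\cspan\{\sN_\l:\l\in\cD\cup\cD_e\}$ --- Taylor expansion at $0$ and at $\infty$, plus density of $\sN_\l(V_*)$ in $\sN_\l$ via $(\ker F^{[*]})^{[\perp]}=\overline{\ran F}$ --- is essentially the paper's own argument leading to \eqref{eq:span_D} and \eqref{eq:span_De}.

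What you add, and what the paper leaves implicit, is the explicit treatment of the condition $\sigma_p(V)\setminus\dT=\emptyset$ from Definition \ref{Def:simple}. Your forward direction is correct: an off-circle eigenvector $g$ of $V$ satisfies $F^{[*]}g=0$, $Tg=g/\l$, $Gg=0$, hence is $[\perp]$ to every $T^mFu_1$ and $(T^{[*]})^nG^{[*]}u_2$, so close connectedness kills it. In the converse you correctly isolate the delicate point (degeneracy of $\sM=\sS_\Delta^{[\perp]}$), but the fix is not yet complete: after passing to $\sM/\sM_0$, an eigenvector of the induced operator $\widetilde W$ at $\mu\notin\dT$ is only an eigenvector of $W=V|_\sM$ \emph{modulo} $\sM_0$, so ``spectrum in $\dT$ by simplicity of $V$'' does not follow directly from $\sigma_p(V)\subseteq\dT$. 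It does follow, but you need a lifting step: writing $W$ in triangular form with respect to $\sM=\sM_0\dotplus\sM_1$, the block $W|_{\sM_0}-\mu I$ is invertible on the finite-dimensional invariant subspace $\sM_0$ because $\sigma_p(W|_{\sM_0})\subseteq\sigma_p(V)\subseteq\dT$, and therefore a $\mu$-eigenvector of $\widetilde W$ lifts to a genuine $\mu$-eigenvector of $W$, contradicting simplicity. With that inserted, the conclusion $\sigma(W)\subseteq\dT$, hence $\sM\subseteq\ran(I-\bar\l V)$ and $\sM\,[\perp]\,\sN_\l$ for all $\l\in\dD\cup\dD_e$, and finally $\sM=\{0\}$, goes through. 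So your route for (iv) is sound and in fact more careful than the printed proof on the point-spectrum half of simplicity, but it is incomplete as a proof of the theorem as stated.
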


\begin{proof}
(i) Recall that $\wh\sN_\l(V_*)$ consists of vectors
$\begin{pmatrix}f_\l\\ \l f_\l\end{pmatrix}\in V_*$.
Therefore, the vector
 $\begin{pmatrix}
   Th_\l+Fu_1\\
      h_\l
 \end{pmatrix}$
$(h_\l\in\sH,\, u_1\in\sL_1)$ belongs to $\wh\sN_\l(V_*)$ precisely when
\[
 h_\l=\l(Th_\l+Fu_1).
\]
Hence, for $\l\in\cD=\r(T^{-1})\cap\dD$ one obtains
\[
 h_\l=\l(I-\l T)^{-1}Fu_1, \quad\l\in \cD.
\]
Similarly, the vector
 $\begin{pmatrix}
   g_\l\\
   T^{[*]}g_\l+G^{[*]}u_2
    \end{pmatrix}$
$g_\l\in\sH,\, u_2\in\sL_2$ belongs to $\wh\sN_\l(V_*)$ if and only if $$T^{[*]}g_\l+G^{[*]}u_2=\l g_\l.$$
Hence, for $\l\in\cD_e=\r(T^{[*]})\cap\dD_e$ one obtains
\[
 g_\l=(\l I- T^{[*]})^{-1}G^{[*]}u_2, \quad\l\in \cD_e.
\]

(ii) By the first formula in \eqref{E:Gstar} one  gets
\begin{equation}\label{eq:3.23A}
 \left\{\begin{pmatrix} f_\l \\ \l f_\l\end{pmatrix},
 \begin{pmatrix} u_1\\ Hu_1+\l G(I-\l T)^{-1}Fu_1
\end{pmatrix}\right\}\in \G, \quad\l\in \cD,
 \end{equation}
 and in view of \eqref{E:g12} and \eqref{E:g122} the $\g$-field $\g_1(\l)$ takes the form
 $$
 \g_1(\l) u_1= f_\l=(I-\l T)^{-1}Fu_1, \quad\l\in \cD.
 $$

Similarly, by the second formula in \eqref{E:Gstar}
\begin{equation}\label{eq:3.23B}
 \left\{\begin{pmatrix}  g_\l \\ \l g_\l \end{pmatrix},
 \begin{pmatrix} H^*u_2+F^{[*]}(\l I-T^{[*]})^{-1}G^{[*]}u_2 \\ u_2\end{pmatrix}\right\}\in \G, \quad\l\in \cD_e,
\end{equation}
and hence
$$
 \g_2(\l) u_2= g_\l=(\l I-T^{[*]})^{-1}G^{[*]}u_2, \quad\l\in \cD_e.
$$

(iii) It follows also from \eqref{eq:3.23A} that
$$
{\Theta}(\l)u_1=\sk({H+\l G(I-\l T)^{-1}F})u_1={\Theta}_\Delta(\l)u_1,\quad \l\in\cD,\, u_1\in\sL_1.
$$

(iv) Notice that in view of \eqref{eq:Vminstar} and \eqref{eq:Vminstar2} the defect subspaces $\sN_\lambda$ take the form
\begin{equation}\label{eq:sN}
 \sN_\l=\{(I-\l T)^{-1}f:\,f \in {{\sH}[-]\ker F^{[*]}}\},\quad \l\in\cD;
\end{equation}
\begin{equation}\label{eq:sNe}
 \sN_\l=\{(\l I- T^{[*]})^{-1}g:\, g \in {\sH}[-]\ker G\}, \quad \lambda\in \cD_e.
\end{equation}
Comparison of \eqref{eq:sNl}, \eqref{eq:sNle} with \eqref{eq:sN}, \eqref{eq:sNe} shows that the subspaces $\sN_\lambda(V_*)$ are dense in $\sN_\lambda$ for all $\lambda\in\cD\cup\cD_e$. Therefore, the set
\[
\overline{\textup{span}}\,\{\sN_\lambda:\,\lambda\in\cD\}\quad(\textup{resp.}\quad
\overline{\textup{span}}\,\{\sN_\lambda:\,\lambda\in\cD_e\})
\]
coincides with the set
\[
\overline{\textup{span}}\,\{\sN_\lambda(V_*):\,\lambda\in\cD\}\quad(\textup{resp.}\quad
\overline{\textup{span}}\,\{\sN_\lambda(V_*):\,\lambda\in\cD_e\}).
\]
In view of the formulas \eqref{eq:sNl}, \eqref{eq:sNle} one obtains the equalities
 \begin{equation}\label{eq:span_D}
\overline{\textup{span}}\,\{\sN_\lambda:\,\lambda\in\cD\}=
\overline{\textup{span}}\,\{\ran (T^mF):\,m\ge 0\},
 \end{equation}
 \begin{equation}\label{eq:span_De}
\overline{\textup{span}}\,\{\sN_\lambda:\,\lambda\in\cD_e\}=
\overline{\textup{span}}\,\{\ran (T^{[*]})^nG^{[*]}):\,n\ge 0\}.
 \end{equation}
By Definition \ref{Def:simple} and the definition in \eqref{eq:Closely_C} this proves the statement (iv).
\end{proof}

\begin{remark}\label{g_inf}
With the assumptions of Theorem~\ref{T:UV2} there exist uniform limits
\begin{equation}\label{eq:g_inf}
  (\gamma_2(\infty):=)\lim_{\lambda\to\infty}\gamma_2(\lambda)=0,\quad
  \lim_{\lambda\to\infty}\lambda\gamma_2(\lambda)=G^{[*]}.
\end{equation}
\end{remark}

\subsection{General case}
\label{subsec3.4}

To prove the desired statements for a general unitary boundary pair some
preparatory lemmas will be used.

Let $\alpha\in \dC$ such that $|\alpha|\neq 1$ and define the
transform $M^{(\alpha)}\in \mathbf{B}(\sH^2)$ in the space
$({\sH}^2, J_{{\sH}^2})$ by
\begin{equation}\label{Malpha}
 M^{(\alpha)}=\frac{1}{\beta}\begin{pmatrix}I & -\alpha I \\ -\bar\alpha I & I \end{pmatrix},\quad \beta:=\sqrt{1-|\alpha|^2}.
\end{equation}
It is easy to check that $M^{(\alpha)}$ is a standard unitary
operator in the Kre\u{\i}n space $({\sH}^2, J_{{\sH}^2})$. Clearly,
$M^{(0)}=I$ and
\begin{equation}\label{Minv}
 (M^{(\alpha)})^{-1}=M^{(-\alpha)}, \quad |\alpha|\neq 1.
\end{equation}
Associated with $M^{(\alpha)}$ define a transform of the extended
complex plane $\dC\cup\{\infty\}$ by the formula
\[
 \mu^{(\alpha)}(\lambda):=\frac{\lambda-\bar\alpha}{1-\alpha \lambda}.
\]

\begin{lemma}\label{lem:Malpha}
The transform $M^{(\alpha)}$ in \eqref{Malpha} maps closed subspaces
of the Kre\u{\i}n space $({\sH}^2, J_{{\sH}^2})$ back to closed subspaces
and it satisfies
\begin{equation}\label{Malpha1}
    \left(M^{(\alpha)}(\cS^{-1})\right)=\left(M^{(\bar\alpha)}\left(\cS\right)\right)^{-1}
\end{equation}
and
\begin{equation}\label{Malpha2}
    M^{(\alpha)}\left(\cS^{-[*]}\right)=\left(M^{(\alpha)}(\cS)\right)^{-[*]}.
\end{equation}
In particular, $M^{(\alpha)}$ maps isometric (unitary, contractive,
expanding) relations $\cS$ in $\sH$ back to isometric
(unitary, contractive, expanding) relations $M^{(\alpha)}(\cS)$ in
$\sH$ and, moreover, for any closed linear relation $\cS$ in
$\sH$ the following statements hold with $1-\alpha
\lambda\neq 0$:
\begin{enumerate}
\def\labelenumi{\rm (\roman{enumi})}
\item $\ker(\cS-\lambda)=\ker(M^{(\alpha)}(\cS)-\mu^{(\alpha)}(\lambda)I)$, $\mul\cS = \ker(\alpha M^{(\alpha)}(\cS)+I)$;
\item $\ran(\cS-\lambda)=\ran(M^{(\alpha)}(\cS)-\mu^{(\alpha)}(\lambda)I)$, $\dom\cS = \ran(\alpha M^{(\alpha)}(\cS)+I)$;
\item $\lambda\in\sigma_j(\cS)$ $\Longleftrightarrow$
  $\mu^{(\alpha)}(\lambda)\in\sigma_j(M^{(\alpha)}(\cS))$ for $j=p,c,r$;
\item $\lambda\in\rho(\cS)$ $\Longleftrightarrow$
  $\mu^{(\alpha)}(\lambda)\in\rho(M^{(\alpha)}(\cS))$.
\end{enumerate}
\end{lemma}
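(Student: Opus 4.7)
The plan is to exploit that $M^{(\alpha)}$ is a bounded operator on $\sH^2$ which is \emph{standardly unitary} with respect to the Krein inner product $[\cdot,\cdot]_{\sH^2}$, with bounded two-sided inverse $M^{(-\alpha)}$. Once this is established, the two displayed identities \eqref{Malpha1}, \eqref{Malpha2} and the preservation of the isometric/unitary/contractive/expanding classes reduce to Krein-space adjoint bookkeeping, and items (i)--(iv) reduce to a single calculation on ``eigen-graph'' elements $\{f,\lambda f\}$.

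First I would verify the standard unitarity by a direct algebraic check: expanding $[M^{(\alpha)}\wh f,M^{(\alpha)}\wh g]_{\sH^2}$ using the matrix \eqref{Malpha}, the cross terms in $\alpha,\bar\alpha$ cancel pairwise and the diagonal terms combine with the factor $\beta^{-2} = (1-|\alpha|^2)^{-1}$ to reproduce $[\wh f,\wh g]_{\sH^2}$; the identity $(M^{(\alpha)})^{-1} = M^{(-\alpha)}$ is immediate from multiplying the two matrices. In particular $M^{(\alpha)}$ is a linear homeomorphism of $\sH^2$, so closed subspaces are sent to closed subspaces.

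For the displayed identities, I would note that \eqref{e:invadj} identifies $\cS^{-[*]}$ with the Krein-orthogonal complement $\cS^{[\perp]}$ in $(\sH^2,J_{\sH^2})$; since any Krein unitary commutes with $[\perp]$, this yields \eqref{Malpha2} at once, and \eqref{Malpha1} is a one-line elementwise computation using that conjugating $M^{(\alpha)}$ by the flip $\{f,f'\}\mapsto\{f',f\}$ replaces $\alpha$ by $\bar\alpha$. The preservation of the four classes of relations then follows from their subspace characterizations: $\cS$ is isometric (respectively unitary) iff $\cS\subseteq \cS^{[\perp]}$ (respectively $\cS=\cS^{[\perp]}$), while $\cS$ is contractive (respectively expanding) iff $\cS$ is a positive (respectively negative) subspace of $(\sH^2,J_{\sH^2})$, and all of these are invariants of $M^{(\alpha)}$.

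The central computation for (i)--(iv) is the pair of identities
\[
 M^{(\alpha)}\{f,\lambda f\} \,=\, \tfrac{1}{\beta}\{(1-\alpha\lambda)f,(\lambda-\bar\alpha)f\} \,=\, \{g,\mu^{(\alpha)}(\lambda)g\},\quad g=\tfrac{1-\alpha\lambda}{\beta}f,
\]
\[
 h-\mu^{(\alpha)}(\lambda)g \,=\, \tfrac{\beta}{1-\alpha\lambda}(f'-\lambda f), \quad \{g,h\}=M^{(\alpha)}\{f,f'\}.
\]
Under the hypothesis $1 - \alpha\lambda \neq 0$ the scaling factors are nonzero, giving (i) and (ii) on the finite-$\lambda$ parts; the $\mul\cS$ and $\dom\cS$ identities appear as the $\lambda=\infty$ degenerations (with $\mu^{(\alpha)}(\infty)=-1/\alpha$, forcing $\alpha\neq 0$), after rewriting $\alpha M^{(\alpha)}(\cS)+I$ as $\alpha\bigl(M^{(\alpha)}(\cS)+\alpha^{-1}I\bigr)$. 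Items (iii) and (iv) follow at once: the three point/continuous/residual classifications are defined through kernels, range closure, and range density, all preserved by the Möbius-plus-scaling correspondence above; and $\lambda\in\rho(\cS)$ is characterised by bijectivity of $\cS-\lambda$ plus boundedness of the inverse, which transfers via the homeomorphism $M^{(\alpha)}$. I expect the only real delicacy to be the pedantic bookkeeping around the $\lambda=\infty$ case and the precise interpretation of $\alpha M^{(\alpha)}(\cS)+I$ as a linear relation.
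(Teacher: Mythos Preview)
Your proposal is correct and follows essentially the same route as the paper: both arguments rest on the Kre\u{\i}n-unitarity of $M^{(\alpha)}$ (the paper states this via the identity $(J_{\sH^2}\wh f,\wh g)=(J_{\sH^2}M^{(\alpha)}\wh f,M^{(\alpha)}\wh g)$, you via $\cS^{-[*]}=\cS^{[\perp]}$), and the core calculation $\{g,h\}=M^{(\alpha)}\{f,f'\}\Rightarrow h-\mu^{(\alpha)}(\lambda)g=\tfrac{\beta}{1-\alpha\lambda}(f'-\lambda f)$ with its $\lambda=\infty$ degeneration is identical to the paper's. One tiny terminological quibble: ``contractive'' corresponds to a \emph{nonnegative} subspace of $(\sH^2,J_{\sH^2})$, not a positive one in the paper's sense, but this does not affect the argument.
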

\begin{proof}
The mapping $M^{(\alpha)}$ is unitary in the Kre\u{\i}n space
$({\sH}^2, J_{{\sH}^2})$ and, in fact, also unitary and selfadjoint as a
linear operator on the Hilbert space $\sH^2$. Therefore, it maps
closed subsets to closed subsets in $\sH^2$. The formula
\eqref{Malpha1} is checked with a straightforward calculation.
Moreover, for all $\wh f=\{f,f'\},\wh g=\{g,g'\}$
\begin{equation}\label{Malpha4}
 \left(J_{\sH^2}\wh f,\wh g\right)=\left(J_{\sH^2}M^{(\alpha)}\wh f,M^{(\alpha)}\wh
 g\right),
\end{equation}
which shows that if $\cS$ is isometric (contractive, expanding)
relation  in $\sH$ so is its image
$M^{(\alpha)}(\cS)$, since the expression in \eqref{Malpha4} with
$\wh f=\wh g\in \cS$ takes the value $=0$ (resp. $\geq 0$ and $\leq
0$). Moreover, \eqref{Malpha4} implies the property \eqref{Malpha2}.

The relation $\cS$ in the Pontryagin space $\sH$ is unitary
precisely when $S=S^{-[*]}$ and hence \eqref{Malpha4} implies that
then also $M^{(\alpha)}(\cS)=(M^{(\alpha)}(\cS))^{-[*]}$, i.e.,
$M^{(\alpha)}(\cS)$ is a unitary relation in $\sH$.

To prove the remaining assertions let $\{f,f'\}\in \cS$. Then
$\{f,f'-\lambda f\}\in (\cS-\lambda I)$ and this is equivalent to
\[
  \binom{\frac{1}{\beta}(f-\alpha f')}
  {\frac{\beta}{1-\alpha\lambda}(f'-\lambda f)}
  \in M^{\alpha}(\cS)-\mu^{(\alpha)}(\lambda) I,
\]
where $\alpha\in\dD$,  $1-\alpha\lambda\neq 0$.
This formula with $\lambda\in\dC$ gives the equalities for
$\ker(\cS-\lambda)$ and $\ran(\cS-\lambda)$. Analogously the choice
$\lambda=\infty$ corresponds to $\mu^{(\alpha)}(\infty)=-1/{\alpha}$
and this yields the formulas for $\mul \cS$ and $\dom \cS$ in
(i) and (ii).

The statements (iii) and (iv) follow from (i) and (ii) when applying
the definitions of the resolvent set $\rho(\cS)$ and the spectral
components $\sigma_j(\cS)$, $j=s,c,r$.
\end{proof}

In the next lemma the transform $M^{(\alpha)}\in \mathbf{B}(\sH^2)$ is
composed with a unitary boundary pair.

\begin{lemma}\label{lem:Galpha}
Let $V$ be a closed isometric operator in a Pontryagin space
${\sH}$, let $({\sL},\G)$ be a unitary boundary pair for $V$, and let
$\alpha\in\dC$, $|\alpha|\neq 1$. Then
\[
 V^{(\alpha)}:=M^{(\alpha)}(V)
\]
is also a closed isometric relation in $\sH$.
Moreover, $\mul V^{(\alpha)}=\{0\}$
precisely when $\alpha^{-1}\not\in \sigma_p(V)$,
and in this case:
\begin{enumerate}
\def\labelenumi{\rm (\roman{enumi})}
\item The composition
\begin{equation}\label{eq:Gamma_alpha}
     \G^{(\alpha)}:=\G\circ M^{(-\alpha)}
\end{equation}
defines a unitary boundary pair $({\sL},\G^{(\alpha)})$ for the
isometric operator $V^{(\alpha)}$.

\item The Weyl function $\Theta^{(\alpha)}$ and the $\gamma$-fields $\gamma_1^{(\alpha)}$, $\gamma_2^{(\alpha)}$ of the  unitary boundary pair $({\sL},\G^{(\alpha)})$ are connected to
  the Weyl function and the $\gamma$-fields  of the  unitary boundary pair $({\sL},\G)$ by
\begin{equation}\label{eq:theta_a}
\Theta^{(\alpha)}
\left(\mu(\alpha)\right)=  \Theta(\lambda),\quad
\quad\lambda\in\cD,
\end{equation}
\begin{equation}\label{eq:gamma_1a}
\gamma_1^{(\alpha)}
\left(\mu(\alpha)\right)=\frac{1-\lambda\alpha}{\beta}\gamma_1(\lambda),\quad\lambda\in\cD,
\end{equation}
\begin{equation}\label{eq:gamma_2a}
\gamma_2^{(\alpha)}
\left(\mu(\alpha)\right)=\frac{1-\lambda\alpha}{\beta}\gamma_2(\lambda),\quad\lambda\in\cD_e.
\end{equation}
\end{enumerate}
\end{lemma}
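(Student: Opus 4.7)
The plan is to use Lemma \ref{lem:Malpha} as the main engine. Since $M^{(\alpha)}$ is a standard Krein-unitary operator on $(\sH^2,J_{\sH^2})$, the lemma immediately gives that $V^{(\alpha)}=M^{(\alpha)}(V)$ is a closed isometric relation in $\sH$. To pin down when $\mul V^{(\alpha)}=\{0\}$, I would compute directly: $(0,g)\in V^{(\alpha)}$ iff there exists $(f,f')\in V$ with $f-\alpha f'=0$, in which case $g=\beta f'$. Hence a nontrivial element of $\mul V^{(\alpha)}$ exists iff there is a nonzero $f'$ with $(\alpha f',f')\in V$, i.e., with $f'=V^{-1}(\alpha^{-1}(\alpha f'))$ showing $\alpha f'$ is an eigenvector of $V$ at eigenvalue $\alpha^{-1}$. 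Thus $\mul V^{(\alpha)}=\{0\}\Leftrightarrow\alpha^{-1}\notin\sigma_p(V)$ (vacuous when $\alpha=0$, so $V^{(0)}=V$).

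For (i), the identity $(M^{(-\alpha)})^{-1}=M^{(\alpha)}$ from \eqref{Minv} and the definition $\G^{(\alpha)}=\G\circ M^{(-\alpha)}$ give at once $\ker\G^{(\alpha)}=M^{(\alpha)}(\ker\G)=V^{(\alpha)}$. To verify Green's identity \eqref{E:1.1} for $\G^{(\alpha)}$, I would exploit that $M^{(-\alpha)}$ is Krein-unitary on $(\sH^2,J_{\sH^2})$, so $[M^{(-\alpha)}\wh f,M^{(-\alpha)}\wh g]_{\sH^2}=[\wh f,\wh g]_{\sH^2}$; applying \eqref{E:1.1} for $\G$ to the preimages then yields \eqref{E:1.1} for $\G^{(\alpha)}$. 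Maximality follows by the same mechanism: if $(\wh g,\wh v)$ satisfies the identity against every element of $\G^{(\alpha)}$, then $(M^{(-\alpha)}\wh g,\wh v)$ satisfies it against every element of $\G$, so by maximality of $\G$ the pair $(M^{(-\alpha)}\wh g,\wh v)$ lies in $\G$ and hence $(\wh g,\wh v)\in\G^{(\alpha)}$.

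For (ii), I would first note that $V_*^{(\alpha)}:=\dom\G^{(\alpha)}=M^{(\alpha)}(V_*)$. Then $(f_\mu,\mu f_\mu)^\top\in V_*^{(\alpha)}$ precisely when there exists $(f,f')\in V_*$ with $M^{(\alpha)}(f,f')^\top=(f_\mu,\mu f_\mu)^\top$. A direct calculation from \eqref{Malpha} reduces this to $f'=\lambda f$ with $\mu=(\lambda-\bar\alpha)/(1-\alpha\lambda)=\mu^{(\alpha)}(\lambda)$, together with the scaling relation $f_\mu=\frac{1-\alpha\lambda}{\beta}f$. Thus $M^{(\alpha)}$ restricts to a bijection $\wh\sN_\lambda(V_*)\to\wh\sN_\mu(V_*^{(\alpha)})$ that scales the underlying vector by $(1-\alpha\lambda)/\beta$. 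Combining this with the observation $(\wh f_\mu,\wh u)\in\G^{(\alpha)}\Leftrightarrow(M^{(-\alpha)}\wh f_\mu,\wh u)=(\wh f_\lambda,\wh u)\in\G$ and plugging into the defining formulas \eqref{D:M1}, \eqref{D:G1}, \eqref{D:G2} delivers \eqref{eq:theta_a}--\eqref{eq:gamma_2a}. To ensure the formulas are evaluated at admissible points, I would finally use Lemma \ref{lem:Malpha}(iv) on the restrictions $V_j^{(\alpha)}=M^{(\alpha)}(V_j)$ (obtained from $\G_j^{(\alpha)}=\G_j\circ M^{(-\alpha)}$) to confirm that $\mu^{(\alpha)}$ maps $\cD$ to $\cD^{(\alpha)}$ and $\cD_e$ to $\cD_e^{(\alpha)}$.

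The substantive part is essentially bookkeeping: the main potential pitfall is handling the case $|\alpha|>1$, where $\beta=\sqrt{1-|\alpha|^2}$ is imaginary and signs/branch choices must be treated consistently. Once the action of $M^{(\alpha)}$ on pairs of the form $(f,\lambda f)^\top$ is unpacked, producing the Möbius image $\mu^{(\alpha)}(\lambda)$ and the scalar factor $(1-\alpha\lambda)/\beta$, every assertion in the lemma drops out either from the abstract Krein-space unitarity of $M^{(\alpha)}$ (for the structural claims) or from this single algebraic identification (for the explicit $\gamma$-field and Weyl function formulas).
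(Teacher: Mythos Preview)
Your proposal is correct and follows essentially the same route as the paper's proof: both invoke Lemma~\ref{lem:Malpha} for the closedness/isometry of $V^{(\alpha)}$, observe that $\G^{(\alpha)}=\G\circ M^{(-\alpha)}$ is unitary with $\ker\G^{(\alpha)}=M^{(\alpha)}(V)$ via the equivalence $\{\wh f,\wh u\}\in\Gamma\Leftrightarrow\{M^{(\alpha)}\wh f,\wh u\}\in\G^{(\alpha)}$, and then unpack the action of $M^{(\alpha)}$ on pairs $(f,\lambda f)^\top$ to obtain the scaling factor $(1-\alpha\lambda)/\beta$ and the M\"obius parameter $\mu^{(\alpha)}(\lambda)$. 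The only cosmetic difference is that you verify Green's identity and maximality for $\G^{(\alpha)}$ by hand, whereas the paper simply notes that the composition of a unitary relation with a standard Kre\u{\i}n-unitary operator is again unitary; and you compute $\mul V^{(\alpha)}$ directly, whereas the paper subsumes this under Lemma~\ref{lem:Malpha}.
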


\begin{proof}
The statements concerning the linear relation $ V^{(\alpha)}$ are implied by Lemma~\ref{lem:Malpha}.

(i) By definition $M^{(-\alpha)}$ is a standard unitary operator in the
Kre\u{\i}n space $({\sH}^2, J_{{\sH}^2})$. Therefore, the composition
$\G^{(\alpha)}=\G\circ M^{(-\alpha)}$ is a unitary relation from the
Kre\u{\i}n space $({\sH}^2, J_{{\sH}^2})$ to the
Kre\u{\i}n space $({\sL}, J_{{\sL}})$.
It follows from the equivalence
\begin{equation}\label{equiv:G}
\{\wh f,\wh u\}\in\Gamma\Longleftrightarrow \{M^{(\alpha)}\wh f,\wh u\}\in \G^{(\alpha)}
\end{equation}
that $\ker
\G^{(\alpha)}=M^{(\alpha)}(V)$ and $\dom
\G^{(\alpha)}=M^{(\alpha)}(V_*)$; see \eqref{Minv}.

(ii) By Lemma~\ref{lem:Malpha}
\[
\lambda\in\sigma_p(V_*)
\Leftrightarrow \mu^{(\alpha)}(\lambda)\in\sigma_p(V_*^{(\alpha)})
\]
and hence the defect subspaces $\sN_\omega(V_*^{(\alpha)}):=\ker( V_*^{(\alpha)} - \omega I)$ are connected with the defect subspaces $\sN_\lambda(V_*)$ by
\begin{equation}\label{equiv:sN}
\sN_\lambda(V_*)=\sN_{\mu^{({\alpha})}(\lambda)}(V_*^{(\alpha)}).
\end{equation}
Rewriting the  equivalence~\eqref{equiv:G} for vectors $f\in\sN_\lambda(V_*)$ we obtain
\begin{equation}\label{equiv:G2}
\left\{\binom{ f}{\lambda f},\binom{u_1}{u_2}\right\}\in\Gamma\Longleftrightarrow
 \left\{\frac{1-\lambda\alpha}{\beta}\binom{ f}{\mu^{({\alpha})}(\lambda)f},\binom{u_1}{u_2}\right\}\in \G^{(\alpha)}.
\end{equation}
In view of Definition~\ref{def:gamma}  this implies
\[
\gamma_1^{(\alpha)}(\mu^{({\alpha})}(\lambda))u_1=\frac{1-\lambda\alpha}{\beta}\gamma_1(\lambda)u_1,\quad \lambda\in\cD;
\]
\[
\gamma_2^{(\alpha)}(\mu^{({\alpha})}(\lambda))u_2=\frac{1-\lambda\alpha}{\beta}\gamma_2(\lambda)u_2,\quad \lambda\in\cD_e.
\]
By virtue of~\eqref{eq:WF2} this proves (ii).
\end{proof}

The next two theorems contain a full characterization of the class of Weyl functions $\Theta(\lambda)$ of boundary pairs.
In the first theorem it is shown that $\Theta(\lambda)$ belongs to the generalized Schur class $\cS_\kappa(\sL_1,\sL_2)$.

\begin{theorem}\label{thm:General}
Let $(\sL,\Gamma)$ be a unitary boundary pair for an isometric operator $V$ in a Pontryagin space $\sH$ and let $V_1=\ker\Gamma_1$, $V_2=\ker\Gamma_2$.
Then:
\begin{enumerate}
\def\labelenumi{\rm (\roman{enumi})}

\item  $V_1$ and $V_2$ are closed linear relations which are connected by $V_1=V_2^{-[*]}$ and, moreover, the sets
\begin{equation}\label{eq:D_D_e}
     \cD:=\rho(V_1)\cap \dD\quad\textup{and}\quad\cD_e:=\rho(V_2)\cap \dD_e
\end{equation}
coincide with the sets in \eqref{eq:cD} and they are connected by~\eqref{eq:DD_e}. In particular, they are nonempty and the sets
$\dD\setminus\cD$ and $\dD_e\setminus\cD_e$ contain at most $\kappa$ points.

\item  The $\gamma$-field $\gamma_1(\lambda)$ is holomorphic on $\cD$ with values in $\mathbf{B}(\sL_1,\sH)$.

\item  The $\gamma$-field $\gamma_2(\lambda)$ is holomorphic on $\cD_e$ with values in $\mathbf{B}(\sL_2,\sH)$. If $0\in\cD$ and $\gamma_2^\#$ is defined by
 \begin{equation}\label{eq:gam_sharp}
   \gamma_2^\#(\lambda):= \gamma_2(\bar{\lambda}^{-1})^*,\quad \lambda\in\cD,
 \end{equation}
then the following uniform limits exist
\begin{equation}\label{eq:g_inf2}
  (  \gamma_2^\#(0):=)\lim_{\lambda\to 0}\gamma_2^\#(\lambda)=0,\quad
  ((\gamma_2^\#)'(0):=)\lim_{\lambda\to 0}\frac{1}{\lambda}\gamma_2^\#(\lambda)(\in\mathbf{B}(\sH,\sL_2)).
\end{equation}
\item The Weyl function $\Theta(\lambda)$ is holomorphic on $\cD$,   takes values in $\mathbf{B}(\sL_1,\sL_2)$ for $\lambda\in\cD$ and belongs to the class $\cS_\kappa(\sL_1,\sL_2)$.
\item
For all $\lambda\in\cD$ the following relation holds
 \begin{equation}\label{eq:WF2}
 \left\{\begin{pmatrix}
      \g_1(\l)u_1 \\
     \l \g_1(\l)u_1
    \end{pmatrix},\begin{pmatrix}
      u_1 \\
     \Theta(\lambda) u_1
    \end{pmatrix}\right\}\in \Gamma,  \quad u_1 \in \sL_1,\,\, \l\in\cD,
  \end{equation}
and for all $\lambda\in\cD_e$ the following relation holds
 \begin{equation}\label{eq:WF1}
 \left\{\begin{pmatrix}
      \g_2(\l)u_2 \\
     \l\g_2(\l)u_2
    \end{pmatrix},\begin{pmatrix}
     \Theta^\#(\lambda)u_2 \\
       u_2
    \end{pmatrix}\right\}\in \Gamma,  \quad u_2 \in\sL_2,\,\, \l\in\cD_e,
  \end{equation}	
  where
  \[
   \Theta^\#(\l):=\Theta(1/\ov \l)^* \quad (\l\in \cD_e).
  \]
\item
Moreover, the following equalities are satisfied
\begin{equation}\label{E:1.8}
\g_1(\l)=\g_1(\m)+(\l-\m)(V_1-\l I)^{-1}\g_1(\m),\quad \l,\m\in\cD;
\end{equation}
\begin{equation}\label{E:1.8.2}
\g_2(\l)=\g_2(\m)+(\l-\m)(V_2-\l I
)^{-1}\g_2(\m),\quad \l,\m\in\cD_e.
\end{equation}
\end{enumerate}
\end{theorem}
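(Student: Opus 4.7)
The plan is to reduce the theorem to the unitary colligation setting treated in Theorems \ref{T:UV} and \ref{T:UV2} by means of the fractional linear conjugation $M^{(\alpha)}$ and the accompanying transformation of boundary pairs from Lemma \ref{lem:Galpha}. By Proposition \ref{P:kappagamma}(ii) the set $\sigma_p(V_1) \cap \dD$ is finite (of cardinality at most $\kappa$), so one can fix $\alpha \in \dD$ with $\overline\alpha \notin \sigma_p(V_1)$. For such an $\alpha$, Lemma \ref{lem:Malpha}(i) yields $\ker V_1^{(\alpha)} = \ker(V_1 - \overline\alpha I) = \{0\}$, where $V_j^{(\alpha)} = M^{(\alpha)}(V_j)$; hence by Proposition \ref{P:G1G2}(v) the main transform $\cU^{(\alpha)}$ of $\Gamma^{(\alpha)} = \Gamma \circ M^{(-\alpha)}$ is a standard unitary operator, that is, the graph of the adjoint of a unitary colligation of the form \eqref{eq:U}. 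Theorems \ref{T:UV} and \ref{T:UV2} then establish every claim of the present theorem for the transformed pair $(\sL, \Gamma^{(\alpha)})$: closedness and the duality $V_1^{(\alpha)} = (V_2^{(\alpha)})^{-[*]}$; the explicit spectral picture of $\cD^{(\alpha)}$ and $\cD_e^{(\alpha)}$, their symmetry \eqref{eq:DD_e} and the cardinality bound; holomorphy and $\mathbf{B}$-valuedness of $\gamma_j^{(\alpha)}$ and $\Theta^{(\alpha)}$; and the fact that $\Theta^{(\alpha)}$ is the characteristic function of a unitary colligation, hence lies in $\cS_\kappa^0(\sL_1,\sL_2) \subseteq \cS_\kappa(\sL_1,\sL_2)$.

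The conclusions for the original pair are then read off via Lemma \ref{lem:Malpha}(iii)--(iv) and the transfer formulas \eqref{eq:theta_a}--\eqref{eq:gamma_2a} of Lemma \ref{lem:Galpha}(ii). The Möbius map $\mu^{(\alpha)}$ is a biholomorphism of $\dD$ onto itself that sends $\cD^{(\alpha)}, \cD_e^{(\alpha)}$ bijectively to $\cD, \cD_e$, preserving the complement cardinalities and the symmetry \eqref{eq:DD_e}; the identity $V_1 = V_2^{-[*]}$ descends from the colligation case via \eqref{Malpha2}; and holomorphy together with $\mathbf{B}$-valuedness of $\gamma_j$ and $\Theta$ are immediate because the rescaling factors in \eqref{eq:gamma_1a}--\eqref{eq:gamma_2a} are nonvanishing scalar holomorphic functions. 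The limits \eqref{eq:g_inf2} follow from Remark \ref{g_inf} after the substitution placing $0$ into $\cD$. For the generalized Schur property, \eqref{eq:theta_a} expresses $\sfK_\omega^\Theta(\lambda)$ as $\sfK_{\mu^{(\alpha)}(\omega)}^{\Theta^{(\alpha)}}(\mu^{(\alpha)}(\lambda))$ up to multiplication of the kernel by the nonvanishing positive scalar factor arising from the Möbius Jacobian; since this rescaling does not change the signature of the Hermitian matrix in \eqref{kerLambda1}, the number of negative squares is preserved and $\Theta \in \cS_\kappa(\sL_1,\sL_2)$.

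Parts (v) and (vi) are direct consequences of the definitions once single-valuedness and holomorphy of $\gamma_1, \gamma_2, \Theta$ are available. Relation \eqref{eq:WF2} is a restatement of Definitions \ref{def:gamma} and \ref{char}; \eqref{eq:WF1} follows by applying \eqref{eq:WF2} at the conjugate parameter $1/\overline\lambda \in \cD$ and then using Green's identity \eqref{E:1.1} to identify the $\sL_1$-entry of the boundary value as $\Theta(1/\overline\lambda)^* u_2 = \Theta^\#(\lambda) u_2$. For \eqref{E:1.8} I would subtract two copies of \eqref{eq:WF2} at $\lambda$ and $\mu$ with the same $u_1 \in \sL_1$; the $\sL_1$-component of the resulting boundary value vanishes, so the first $\sH^2$-entry lies in $V_1 = \ker \Gamma_1$, and rewriting the second $\sH$-coordinate as $\lambda(\gamma_1(\lambda)-\gamma_1(\mu))u_1 + (\lambda-\mu)\gamma_1(\mu)u_1$ and inverting the boundedly invertible operator $V_1 - \lambda I$ on $\cD$ yields the formula; \eqref{E:1.8.2} is obtained analogously from \eqref{eq:WF1}. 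The main technical obstacle in this plan is the kernel signature book-keeping across the Möbius substitution in the proof of $\Theta \in \cS_\kappa(\sL_1,\sL_2)$; everything else is a transparent transfer of the colligation-based Theorems \ref{T:UV} and \ref{T:UV2} through the framework of Lemma \ref{lem:Galpha}.
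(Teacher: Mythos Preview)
Your proof is correct and follows essentially the same route as the paper: reduce to the colligation case via the M\"obius conjugation $M^{(\alpha)}$, invoke Theorems~\ref{T:UV} and~\ref{T:UV2} for $(\sL,\Gamma^{(\alpha)})$, and transfer the conclusions back through Lemmas~\ref{lem:Malpha} and~\ref{lem:Galpha}. The only cosmetic difference is that the paper selects $\alpha$ so that $\mul V_2^{(\alpha)}=\{0\}$ (equivalently $\ker(I-\alpha V_2)=\{0\}$, via Proposition~\ref{P:kappagamma}(i)), whereas you select it so that $\ker V_1^{(\alpha)}=\{0\}$; by Proposition~\ref{P:G1G2}(v) these conditions are equivalent and either one forces $\cU^{(\alpha)}$ to be a standard unitary operator.

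Two small remarks. First, Proposition~\ref{P:kappagamma}(ii) bounds only $(\sigma_p(V_1)\setminus\sigma_p(V))\cap\dD$; to get finiteness of $\sigma_p(V_1)\cap\dD$ you also need the standard fact (stated in Section~\ref{sec1}) that $\sigma_p(V)\cap\dD$ has at most $\kappa$ points, giving a bound of $2\kappa$ rather than $\kappa$---which is of course sufficient. Second, in the $\cS_\kappa$ argument the rescaling of the Schur kernel is not a single positive scalar but the factor $(1-\alpha\lambda)\overline{(1-\alpha\omega)}/(1-|\alpha|^2)$; since this is of the form $\varphi(\lambda)\overline{\varphi(\omega)}$ with $\varphi$ nonvanishing, the associated Hermitian matrices differ by a diagonal congruence, and the signature is preserved as you claim.
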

\begin{proof}
(i) By Proposition~\ref{P:kappagamma} there is $\alpha\in\dD$, such that
\begin{equation}\label{eq:ker_V}
    \ker(I-\alpha V_2)=\{0\}.
\end{equation}
Then the linear relation $V^{(\alpha)}=M^{(\alpha)}V$ is isometric and single-valued. By Lemma~\ref{lem:Galpha} the pair
$(\sL,\Gamma^{(\alpha)})$ with $\Gamma^{(\alpha)}=\Gamma \circ M^{(-\alpha)}$
is a unitary boundary pair for the  isometric operator $V^{(\alpha)}$.

The linear relations $V_1^{(\alpha)}:=\ker\Gamma_1^{(\alpha)}$, $V_2^{(\alpha)}:=\ker\Gamma_2^{(\alpha)}$ are related with the linear relations $V_1$ and $V_2$ by the equalities
\begin{equation}\label{eq:V1}
V_1^{(\alpha)}=M^{(\alpha)}(V_1)=\left\{\begin{pmatrix}
                                   f-{\alpha}f' \\  -\overline{\alpha} f + f'
                                 \end{pmatrix}: \{f,f'\}\in V_1 \right\},
\end{equation}
\begin{equation}\label{eq:V2}
V_2^{(\alpha)}=M^{(\alpha)}(V_2)=\left\{\begin{pmatrix}
                                   f-{\alpha}f' \\  -\overline{\alpha} f + f'
                                 \end{pmatrix}: \{f,f'\}\in V_2 \right\}.
\end{equation}
By the choice of $\alpha$ we have $\mul V_2^{(\alpha)}=\{0\}$. Let $\cU^{(\alpha)}=\cJ\Gamma^{(\alpha)}$.
By Proposition~\ref{P:G1G2} $\mul \cU^{(\alpha)}=\{0\}$, which by Lemma~\ref{PontryIsom2}~(v) implies that
$\cU^{(\alpha)}$ is (the graph of) a unitary colligation $U^{(\alpha)}$.
Now an application of Theorem~\ref{T:UV} shows that $V_2^{(\alpha)}$ and $(V_1^{(\alpha)})^{-1}$ are graphs of closed bounded
operators defined everywhere on $\sH$ and $V_1^{(\alpha)}=(V_2^{(\alpha)})^{-[*]}$; see \eqref{E:V_12}.
In view of~\eqref{eq:V1} and~\eqref{eq:V2} the linear relations $V_1$ and $V_2$ are closed and \eqref{Malpha2} implies that $V_1=V_2^{-[*]}$; cf. Lemma~\ref{lem:Malpha}.
Moreover, it follows from Lemma~\ref{lem:Malpha} that with $j=1,2$,
\begin{equation}\label{eq:rho}
\lambda\in\rho(V_j)\Leftrightarrow  \mu^{(\alpha)}(\lambda)\in\rho(V_j^{(\alpha)}).
\end{equation}

Hence the set $\cD:=\rho(V_1)\cap \dD$ is nonempty  and  $\card(\dD\setminus\cD)\le\kappa$ since the same properties hold for
the set $\cD^{(\alpha)}:=\rho(V_1^{(\alpha)})\cap \dD$; see Theorem~\ref{T:UV}~(v).

Similarly, by Theorem~\ref{T:UV} (v) the set $\cD_e^{(\alpha)}:=\rho(V_2^{(\alpha)})\cap \dD_e$ is nonempty,  $\card(\dD_e\setminus\cD_e^{(\alpha)})\le\kappa$ and by~\eqref{eq:rho} this implies the corresponding statement for $\dD_e\setminus\cD_e$.

(ii) \& (iii) By Theorem~\ref{T:UV2} the $\gamma$-field $\gamma_1^{(\alpha)}(\lambda)$ is holomorphic on $\cD^{(\alpha)}$ with values in $\mathbf{B}(\sL_1,\sH)$ and
the $\gamma$-field $\gamma_2^{(\alpha)}(\lambda)$ is holomorphic on $\cD_e^{(\alpha)}$ with values in $\mathbf{B}(\sL_2,\sH)$. The desired statement for the $\gamma$-fields $\gamma_1(\lambda)$ and $\gamma_2(\lambda)$ follows from~\eqref{eq:gamma_1a} and \eqref{eq:gamma_2a}.

The existence of the limits in~\eqref{eq:g_inf2} is obtained from~\eqref{eq:gamma_1a} and Remark \ref{g_inf}.

(iv) This statement is implied by Theorem~\ref{T:UV2} (iii) and ~\eqref{eq:theta_a}.

(v) The relation~\eqref{eq:WF2} is implied by~\eqref{D:G2},~\eqref{D:M1} and items (ii), (iv).

To prove~\eqref{eq:WF1}, let us choose for $\lambda\in\cD_e$, $u_2\in\sL_2$, and a unique vector $v\in\sL_1$, such that
\begin{equation}\label{eq:gam_Theta}
\left\{\begin{pmatrix}
      \g_2(\l)u_2 \\
      \l\g_2(\l)u_2
    \end{pmatrix},\begin{pmatrix}
     v \\
       u_2
    \end{pmatrix}\right\}\in \Gamma.
\end{equation}
By applying \eqref{E:1.1} or \eqref{Eqgamma1} to the elements \eqref{eq:gam_Theta} with $\lambda\in\cD_e$ and \eqref{eq:WF2}
with $\lambda$ replaced by $\bar\l^{-1}\in\cD$ it is seen that for all $u_1\in \sL_1$
\[
\begin{split}
    0&=[\g_2(\l)u_2,\g_1(\bar\l^{-1})u_1]_{\sH} - [\lambda\g_2(\l)u_2,\bar\l^{-1}\g_1(\bar\l^{-1})u_1]_{\sH}\\
    &=(v,u_1)_{\sL_1} - (u_2,\Theta(\bar\l^{-1})u_1)_{\sL_2}
\end{split}
\]
and hence $v =\Theta(\bar\l^{-1})^*u_2=\Theta^\#(\lambda)u_2$.

(vi)
To prove the identity \eqref{E:1.8} consider the vector
$f_\m=\g_1(\m)u_1\in\sN_\m(V_*)$, where $u_1\in\sL_1$, $\m\in \cD$.
Then there exists $u_2\in\sL_2$ such that
\begin{equation}\label{eq:f_mu}
    \left\{{\begin{pmatrix}
      f_\m \\
      \m f_\m
    \end{pmatrix}},\begin{pmatrix}
      u_1 \\
      u_2
    \end{pmatrix}\right\}\in \Gamma,\quad \m\in \cD.
    \end{equation}
For $\l \in \cD$ consider the vector
\[
\wh f_\l=\begin{pmatrix} f_\m \\ \m f_\m
\end{pmatrix}+\wh g, \quad\textup{where}\quad
\wh g=(\l-\m)\begin{pmatrix} (V_1-\l I)^{-1}f_\m
\\I + \l(V_1-\l I)^{-1}f_\m\end{pmatrix}
\in V_1\subset V_*.
\]
Direct calculations show that $\wh f_\l\in\wh\sN_\l(V_*)$. Since $\wh g=\begin{pmatrix}
      g \\
      g'
    \end{pmatrix}\in V_1$ there exists $v_2\in\sL_2$ such that
\begin{equation}\label{eq:g_V1}
    \left\{{\begin{pmatrix}
      g \\
      g'
    \end{pmatrix}},\begin{pmatrix}
      0 \\
      v_2
    \end{pmatrix}\right\}\in \Gamma.
    \end{equation}
It follows from~\eqref{eq:f_mu} and~\eqref{eq:g_V1} that
\[
    \left\{{\begin{pmatrix}
        f_\l \\
       \l f_\l
    \end{pmatrix}},\begin{pmatrix}
      u_1 \\
      u_2+v_2
    \end{pmatrix}\right\}=
    \left\{{\begin{pmatrix}
        f_\m+g \\
       \m f_\m+g'
    \end{pmatrix}},\begin{pmatrix}
      u_1 \\
      u_2+v_2
    \end{pmatrix}\right\}\in \Gamma
\]
and hence $\gamma_1(\lambda)u_1=f_\lambda$. This proves \eqref{E:1.8}.

The equality \eqref{E:1.8.2} is proved similarly.
\end{proof}

There is an analog for the notion of transposed boundary triple (see \cite{DHMS06}) for boundary pairs of isometric operators.
In the present case this notion contains the second boundary triple associated
with a dual pair $\{V,V^{-1}\}$ as defined in \cite{MM03} in the case of ordinary boundary triples for Hilbert space isometries.
For this purpose the notion of \textit{transposed boundary pair} $(\sL_2\oplus\sL_1,\Gamma^\top)$ is introduced for boundary pairs of
isometric operators and its basic properties are established in the next proposition.

\begin{proposition}\label{prop:trans}
Let $(\sL,\Gamma)$ be a unitary boundary pair for an isometric operator $V$ in a Pontryagin space $\sH$, let $V_1=\ker\Gamma_1$, $V_2=\ker\Gamma_2$, let $\gamma_1(\lambda)$ and $\gamma_2(\lambda)$ be the $\gamma$-fields of $(\sL,\Gamma)$, let $\Theta(\lambda)$ be the Weyl function of $(\sL,\Gamma)$, and define
\begin{equation}\label{eq:G_trans}
        \Gamma^\top=\left\{\left\{\binom{f'}{f},\binom{u_2}{u_1}\right\}:\,
        \left\{\binom{f}{f'},\binom{u_1}{u_2}\right\}\in\Gamma\right\}.
\end{equation}
Then:
\begin{enumerate}
\def\labelenumi{\rm (\roman{enumi})}

\item $(\sL_2\oplus\sL_1,\Gamma^\top)$ is a unitary boundary pair for $V^{-1}$.
\item  $V_1^\top:=\ker\Gamma_1^\top=V_2^{-1}$ and $V_2^\top:=\ker\Gamma_2^\top=V_1^{-1}$.
\item  The Weyl function $\Theta^\top(\lambda)$ and the $\gamma$-fields $\gamma_1^\top(\lambda)$ and $\gamma_2^\top(\lambda)$, corresponding to the pair $(\sL_2\oplus\sL_1,\Gamma^\top)$ are connected with $\Theta(\lambda)$, $\gamma_1(\lambda)$ and $\gamma_2(\lambda)$ by
\begin{equation}\label{eq:theta_top}
  \Theta^\top(\lambda)=\Theta(\bar\lambda)^*,\quad \l\in\overline{\cD}:=\{\ov\mu:\mu \in \cD\},
\end{equation}
\begin{equation}\label{eq:gamma_top}
  \gamma_1^\top(\lambda)=\frac{1}{\lambda}\gamma_2\left(\frac{1}{\lambda}\right),\quad\l\in\overline{\cD};\qquad
 \gamma_2^\top(\lambda)=\frac{1}{\lambda}\gamma_1\left(\frac{1}{\lambda}\right),\quad\l\in\overline{\cD}_e.
\end{equation}
\end{enumerate}
\end{proposition}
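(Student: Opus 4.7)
The plan is to verify all three assertions by direct unwinding of the definitions; no substantial new analytic input is needed beyond the realization formulas \eqref{eq:WF2}--\eqref{eq:WF1} already established in Theorem \ref{thm:General}. I proceed in the natural order (i), (ii), (iii).

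For (i), observe that the two ``coordinate swaps'' $\binom{f}{f'} \mapsto \binom{f'}{f}$ on $\sH^2$ and $\binom{u_1}{u_2} \mapsto \binom{u_2}{u_1}$ from $(\sL, J_\sL)$ to $(\sL_2 \oplus \sL_1, J_{\sL^\top})$ with $J_{\sL^\top} = \diag(I_{\sL_2}, -I_{\sL_1})$ each flip the sign of the indefinite inner product. Consequently the Green identity \eqref{E:1.1} for $\Gamma^\top$ reads
\[
  [f',g']_\sH - [f,g]_\sH = (u_2,v_2)_{\sL_2} - (u_1,v_1)_{\sL_1},
\]
which is just $-1$ times \eqref{E:1.1} for $\Gamma$ and hence holds. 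The maximality condition in Definition \ref{D:1}(2) transfers along the bijective swap, and $\ker \Gamma^\top = V^{-1}$ is immediate from $\ker \Gamma = V$. Thus $(\sL_2 \oplus \sL_1, \Gamma^\top)$ is a unitary boundary pair for $V^{-1}$.

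For (ii), the element $\binom{f'}{f}$ lies in $V_1^\top := \ker \Gamma_1^\top$ iff there exists $u_1 \in \sL_1$ with $\{\binom{f'}{f}, \binom{0}{u_1}\} \in \Gamma^\top$, which by the definition \eqref{eq:G_trans} of $\Gamma^\top$ means $\{\binom{f}{f'}, \binom{u_1}{0}\} \in \Gamma$, i.e., $\binom{f}{f'} \in \ker \Gamma_2 = V_2$. Hence $V_1^\top = V_2^{-1}$, and symmetrically $V_2^\top = V_1^{-1}$.

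For (iii), note first that $V_*^\top = V_*^{-1}$, so $\binom{f'_\lambda}{\lambda f'_\lambda} \in \wh\sN_\lambda(V_*^\top)$ corresponds bijectively to $\binom{\lambda f'_\lambda}{f'_\lambda} \in \wh\sN_{1/\lambda}(V_*)$. For $\lambda \in \overline{\cD}$ the involution $\cD_e = \cD^\circ$ from \eqref{eq:DD_e} gives $1/\lambda \in \cD_e$, so \eqref{eq:WF1} applies with $\mu := 1/\lambda$: the correspondence $\{\wh{f'}_\lambda, \binom{u_2}{u_1}\} \in \Gamma^\top \Longleftrightarrow \{\binom{\lambda f'_\lambda}{f'_\lambda}, \binom{u_1}{u_2}\} \in \Gamma$ together with \eqref{eq:WF1} yields $\lambda f'_\lambda = \gamma_2(1/\lambda) u_2$ and $u_1 = \Theta^\#(1/\lambda) u_2 = \Theta(\bar\lambda)^* u_2$. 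Reading the first component of $\wh{f'}_\lambda$ and the ``output'' gives both
\[
  \gamma_1^\top(\lambda) = \frac{1}{\lambda}\gamma_2\!\left(\frac{1}{\lambda}\right) \quad\text{and}\quad \Theta^\top(\lambda) = \Theta(\bar\lambda)^*.
\]
For $\lambda \in \overline{\cD}_e$ the same involution gives $1/\lambda \in \cD$, and the analogous argument using \eqref{eq:WF2} yields $\gamma_2^\top(\lambda) = \lambda^{-1}\gamma_1(1/\lambda)$.

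The only mild obstacle is keeping the bookkeeping straight between three overlapping index conventions: the positional labels $1,2$ for components of $\sL$, the inversion coming from $V \mapsto V^{-1}$, and the spectral flip $\lambda \leftrightarrow 1/\lambda$. Once \eqref{eq:DD_e} is invoked to match the natural domain of $\Theta^\top$ with $\overline{\cD}$ (and of $\gamma_2^\top$ with $\overline{\cD}_e$), the verification of (iii) reduces to reading off the identities \eqref{eq:WF2}--\eqref{eq:WF1} term by term.
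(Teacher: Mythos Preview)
Your proof is correct and follows essentially the same approach as the paper: both verify (i) via the sign-flipped Green identity, dispose of (ii) by direct inspection of the definitions, and derive (iii) by reading off the relations \eqref{eq:WF1} and \eqref{eq:WF2} after the swap $\binom{f}{f'}\leftrightarrow\binom{f'}{f}$ and the spectral substitution $\lambda\mapsto 1/\lambda$. Your presentation is slightly more explicit about the bookkeeping (the two coordinate swaps and the use of $\cD_e=\cD^\circ$), but there is no substantive difference in strategy or content.
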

\begin{proof}
(i)  This statement is implied by the equality $V=\ker \G$ and the following identity
\begin{equation}\label{eq:may9a}
[f',g']_{{\sH}}-[f,g]_{{\sH}}=(u_2,v_2)_{{\sL}_2}-(u_1,v_1)_{{\sL}_1},
\end{equation}
which is valid for all $\{\wh f, \wh u\},\{\wh g, \wh v\}\in \G$; see Definition~\ref{D:1}.

(ii) The statement is clear from the definition of $(\sL_2\oplus\sL_1,\Gamma^\top)$.

(iii) If $\l\in\overline{\cD}$ then $1/\l\in{\cD_e}$ and one obtains from~\eqref{eq:WF1}
\[
    \left\{{\begin{pmatrix}
      \gamma_2(1/\lambda)u_2 \\
      1/\lambda\gamma_2(1/\lambda)u_2
    \end{pmatrix}},\begin{pmatrix}
      \Theta^\#(1/\lambda)u_2 \\
      u_2
    \end{pmatrix}\right\}\in \Gamma,\quad u_2\in\sL_2,\quad\l\in \overline{\cD}.
\]
In view of~\eqref{eq:G_trans} this implies
\[
    \left\{{\begin{pmatrix}
      1/\lambda\gamma_2(1/\lambda)u_2 \\
      \gamma_2(1/\lambda)u_2
    \end{pmatrix}},\begin{pmatrix}
      u_2\\
      \Theta^\#(1/\lambda)u_2
      \end{pmatrix}\right\}\in \Gamma^\top,\quad u_2\in\sL_2,\quad\l\in \overline{\cD}
\]
and hence
\[
 \gamma_1^\top(\lambda)=\frac{1}{\lambda}\gamma_2\left(\frac{1}{\lambda}\right)
\quad\mbox{ and }
\quad
  \Theta^\top(\lambda)=\Theta^\#(1/\lambda)=\Theta(\bar\lambda)^*,\quad \l\in\overline{\cD}.
\]
The second equality in~\eqref{eq:gamma_top} is proved similarly.
\end{proof}

\subsection{Realization theorem}
The converse statement to Theorem~\ref{thm:General} contains the main realization result for the generalized Schur class $\cS_\kappa(\sL_1,\sL_2)$:
every function $\Theta(\lambda)$ from the class $\cS_\kappa(\sL_1,\sL_2)$ can be realized as the Weyl function of
a boundary pair for some isometric operator $V$ in a Pontryagin space.

\begin{theorem}\label{thm:Inv}
Let $s(\cdot)\in \cS_\k({\sL}_1,{\sL}_2)$ with the domain of holomorphy $\gh_s(\subset\dD)$. Then there exists a simple isometric operator $V$ in a Pontryagin space ${\sH}$
and a unitary boundary pair $({\sL}_1\times{\sL}_2, \G)$ such that the corresponding Weyl function $\Theta(\l)$ coincides with $s(\l)$ on $\gh_s$.
\end{theorem}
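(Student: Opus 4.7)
The strategy is to reduce to the situation of Theorem~\ref{thm:Realiz}, which covers the subclass $\cS_\kappa^0(\sL_1,\sL_2)$ of Schur functions holomorphic at $0$, by precomposing $s$ with a Möbius automorphism of $\dD$ that moves some point of $\gh_s$ to the origin, and then to pull the resulting boundary pair back using Lemma~\ref{lem:Galpha}.

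First I would pick any point $\lambda_0\in\gh_s$ and set $\alpha:=\overline{\lambda_0}$, so that $|\alpha|<1$ and $\bar\alpha=\lambda_0\in\gh_s$. Define
\[
 s^{(\alpha)}(\mu):=s\!\left(\frac{\mu+\bar\alpha}{1+\alpha\mu}\right)=s\bigl(\mu^{(-\alpha)}(\mu)\bigr),
\]
which is holomorphic in a neighborhood of $\mu=0$. A direct computation (using the standard identity $1-\lambda\bar z=(1-|\alpha|^2)^{-1}(1+\alpha\mu)(1+\bar\alpha\bar\omega)(1-\mu\bar\omega)$ with $\lambda=\mu^{(-\alpha)}(\mu)$, $z=\mu^{(-\alpha)}(\omega)$) yields the factorization
\[
 \frac{I-s^{(\alpha)}(\mu)s^{(\alpha)}(\omega)^*}{1-\mu\bar\omega}
 =\frac{(1+\alpha\mu)(1+\bar\alpha\bar\omega)}{1-|\alpha|^2}\,\frac{I-s(\lambda)s(z)^*}{1-\lambda\bar z}.
\]
The scalar factor is a nonvanishing rank-one positive kernel, so the two Schur kernels carry the same number of negative squares and $s^{(\alpha)}\in\cS_\kappa^0(\sL_1,\sL_2)$.

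Next I would apply Theorem~\ref{thm:Realiz} to $s^{(\alpha)}$ to obtain a closely connected unitary colligation $\Delta^{(\alpha)}=(\sH,\sL_1,\sL_2,U^{(\alpha)})$ whose characteristic function equals $s^{(\alpha)}$. Feeding $U^{(\alpha)}$ through the main transform $\cJ^{-1}$ (Lemma~\ref{lem:Col_BP}), Theorems~\ref{T:UV} and~\ref{T:UV2} produce a unitary boundary pair $(\sL,\Gamma^{(\alpha)})$ for the simple isometric operator $V^{(\alpha)}:=\ker\Gamma^{(\alpha)}$ in $\sH$, whose Weyl function is precisely $s^{(\alpha)}$. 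Now I would apply Lemma~\ref{lem:Galpha} with parameter $-\alpha$ to $(\sL,\Gamma^{(\alpha)})$ and set
\[
 V:=M^{(-\alpha)}\bigl(V^{(\alpha)}\bigr),\qquad \Gamma:=\Gamma^{(\alpha)}\circ M^{(\alpha)}.
\]
The required single-valuedness condition $(-\alpha)^{-1}\notin\sigma_p(V^{(\alpha)})$ holds: simplicity of $V^{(\alpha)}$ forces $\sigma_p(V^{(\alpha)})\subseteq\dT$, while $-1/\alpha\in\dD_e\setminus\dT$ since $|\alpha|<1$. Lemma~\ref{lem:Galpha} then gives that $(\sL,\Gamma)$ is a unitary boundary pair for $V$, and the transformation formula \eqref{eq:theta_a} yields
\[
 \Theta(\lambda)=\Theta^{(\alpha)}\bigl(\mu^{(\alpha)}(\lambda)\bigr)=s^{(\alpha)}\bigl(\mu^{(\alpha)}(\lambda)\bigr)=s(\lambda),
\]
which is valid on the common domain and extends by the identity principle to all of $\gh_s$. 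Simplicity of $V$ transfers from that of $V^{(\alpha)}$ through Lemma~\ref{lem:Malpha}: the Möbius map $\mu^{(\alpha)}$ preserves the trichotomy $\dD\cup\dT\cup\dD_e$, so $\sigma_p(V)\subseteq\dT$, and the identification $\sN_\lambda(V_*)=\sN_{\mu^{(\alpha)}(\lambda)}(V^{(\alpha)}_*)$ from the proof of Lemma~\ref{lem:Galpha} transfers the span condition for the defect subspaces.

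\textbf{Expected main obstacle.} The only real care is needed in verifying that the Möbius change of variable preserves the generalized Schur class with the \emph{same} index $\kappa$; this is handled by the explicit kernel factorization above. Everything else — closedness and isometry of $V$, and transfer of simplicity — reduces to a routine bookkeeping using Lemmas~\ref{lem:Malpha} and~\ref{lem:Galpha}.
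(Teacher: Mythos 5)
Your proposal is correct and follows essentially the same route as the paper: shift a point of $\gh_s$ to the origin via $s^{(\alpha)}(\zeta)=s\bigl((\zeta+\bar\alpha)/(1+\alpha\zeta)\bigr)$, realize $s^{(\alpha)}$ through Theorem~\ref{thm:Realiz} and Theorems~\ref{T:UV},~\ref{T:UV2}, and pull the resulting boundary pair back with Lemma~\ref{lem:Galpha} (the paper's formula \eqref{eq:theta1}); your extra checks (invariance of the negative-square count and the hypothesis $(-\alpha)^{-1}\notin\sigma_p(V^{(\alpha)})$) only make explicit what the paper leaves implicit. Note only that your displayed identity for $1-\lambda\bar z$ has the scalar factor inverted — it should read $1-\lambda\bar z=\dfrac{(1-|\alpha|^2)(1-\mu\bar\omega)}{(1+\alpha\mu)(1+\bar\alpha\bar\omega)}$ — but since the factor is in either case of the form $c(\mu)\overline{c(\omega)}$ with $c$ nonvanishing, the conclusion that $s^{(\alpha)}\in\cS^0_\kappa(\sL_1,\sL_2)$ is unaffected.
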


\begin{proof}
1) First assume that $s(\cdot)$ is holomorphic at $0$. Then by
\cite[Theorem~2.3.1]{ADRS97} there exists a closely connected
unitary colligation $U=\begin{pmatrix}
  T&F\\
  G&H
\end{pmatrix}$
such that its characteristic function coincides with $s(\l)$ for all $\lambda\in\gh_s$.

Let $V$ be defined by the formula $V=T^{[*]}\upharpoonright \ker F^{[*]}$ and let the
linear relation $\G$ be defined by  \eqref{E:Gstar}. Then by Theorem ~\ref{T:UV}
the pair $({\sL},\G)$ is a unitary boundary pair for $V_*$ and the corresponding Weyl
function $\Theta(\l)$ coincides with $s(\l)$  on $\gh_s$.

2) Assume that $s(\l)$ is holomorphic at $\ov\alpha  \in\gh_s$. Consider a new operator function
\begin{equation}\label{szero}
s^{(\alpha)}(\zeta)=s\sk({\frac{\zeta+\ov\alpha}{1+ \alpha\zeta}}).
\end{equation}
Since $s^{(\alpha)}(\cdot)\in \cS_\k({\sL}_1,{\sL}_2)$ and also is holomorphic at $0$ there exist
a Pontryagin space ${\sH}$, a simple isometric operator $V^{(\alpha)}$ in ${\sH}$, and a unitary boundary pair
$({\sL}, \G^{(\alpha)})$ such that the corresponding Weyl function $\Theta^{(\alpha)}(\zeta)$
coincides with $s^{(\alpha)}(\zeta)$.

By Lemma~\ref{lem:Galpha} the pair
$(\sL,\Gamma)$ with $\Gamma:=\Gamma^{(\alpha)} \circ M^{(\alpha)}$
is a unitary boundary pair for the simple isometric operator
\[
V=\ker\Gamma=M^{(-\alpha)}V^{(\alpha)}=
\left\{\begin{pmatrix}
                                   f+{\alpha}f' \\  \overline{\alpha} f + f'
\end{pmatrix}: \{f,f'\}\in V^{(\alpha)} \right\}.
\]
The domains $V_*:=\dom\Gamma$  and $V_*^{(\alpha)}:=\dom\Gamma^{(\alpha)}$  of $\Gamma$ and $\Gamma^{(\alpha)}$ are connected by
\[
V_*:=\dom\Gamma=M^{(-\alpha)}V_*^{(\alpha)}=
\left\{\begin{pmatrix}
                                   f+{\alpha}f' \\  \overline{\alpha} f + f'
\end{pmatrix}: \{f,f'\}\in V_*^{(\alpha)} \right\},
\]
The defect subspace $\sN_\lambda(V_*):=\ker(V_*-\lambda I)$ and
$\sN_\omega(V_*^{(\alpha)}):=\ker(V_*^{(\alpha)}-\omega I)$ are connected by~\eqref{equiv:sN} and the equivalence~\eqref{equiv:G2} holds.
It follows from~\eqref{equiv:G2} and Definition~\ref{char} that the Weyl functions of the  unitary boundary pairs $({\sL},\G)$ and $({\sL},\G^{(\alpha)})$ are connected by
\begin{equation}\label{eq:theta1}
  \Theta(\lambda)
=\Theta^{(\alpha)}
\left(\frac{\lambda-\ov{\alpha}}{1-{\alpha}\lambda}\right),\quad\lambda\in\gh_s.
\end{equation}
Setting $\zeta=\frac{\lambda-\ov{\alpha}}{1-{\alpha}\lambda}$ one obtains  $\lambda={\frac{\zeta+\ov\alpha}{1+ \alpha\zeta}}$ and hence by~\eqref{eq:theta1} and~\eqref{szero}
\[
  \Theta(\lambda)=\Theta^{(\alpha)}(\zeta)=s^{(\alpha)}(\zeta)=s\sk({\frac{\zeta+\ov\alpha}{1+ \alpha\zeta}})=s(\lambda),\quad\lambda\in\gh_s.
\]
This completes the proof.
\end{proof}

\begin{remark}
  Notice that a simple isometric operator $V$ in Theorem~\ref{thm:Inv}
and a   unitary boundary pair $({\sL}_1\times{\sL}_2, \G)$  are determined by the Weyl function $s(\cdot)$ uniquely, up to a unitary equivalence.
The latter means that if there is another simple isometric operator $V'$ in a Pontryagin space ${\sH}'$ and a    unitary boundary pair $({\sL}_1\times{\sL}_2, \G')$, such that its Weyl function coincides with $s(\cdot)$, then there exists a unitary operator $W$ from
 ${\sH}'$ to  ${\sH}'$, such that
 \[
 V'=WVW^{-1},\quad \Gamma'=\Gamma W^{-1}.
 \]
\end{remark}

Next Theorem~\ref{thm:General} is specialized to the case where $\sH$ is a Hilbert space and $(\sL,\Gamma)$ is a unitary boundary pair for
an isometric operator $V$ in $\sH$. When combined with Theorem~\ref{thm:Inv} we get a general realization result for operator valued
Schur functions $s(\cdot)\in \cS({\sL}_1,{\sL}_2)$ as a Weyl function of a unitary boundary pairs for an isometric operator in a Hilbert space.

\begin{theorem}\label{thm:Hilbert}
Let $\sH$ and $\sL=\sL_1\times\sL_2$ be Hilbert spaces, let $V$ be an isometric operator in $\sH$, let
$(\sL,\Gamma)$ be a unitary boundary pair for $V$ and let $V_1$, $V_2$ be defined by~\eqref{eq:A12}.
Then:
\begin{enumerate}
\def\labelenumi{\rm (\roman{enumi})}

\item $V_1^{-1}$ and $V_2$ are contractive operators in $\mathbf{B}(\sH)$ and they are connected by
\begin{equation}\label{V1V2}
    V_2=V_1^{-*}.
\end{equation}
\item The $\gamma$-field $\gamma_1(\lambda)$ is holomorphic on $\dD$ with values in $\mathbf{B}(\sL_1,\sH)$
and satisfies the identity \eqref{E:1.8} for all $ \l,\m\in\dD$.
\item The $\gamma$-field $\gamma_2(\lambda)$ is holomorphic on $\dD_e$ with values in $\mathbf{B}(\sL_2,\sH)$
and satisfies the identity \eqref{E:1.8.2} for all $ \l,\m\in\dD_e$.

\item The Weyl function $\Theta(\lambda)$ of $V$  corresponding to the boundary pair $(\sL, \G)$ belongs to the Schur class $\cS(\sL_1,\sL_2)$.
\end{enumerate}
Conversely, for every function $s(\cdot)$  from the Schur class $\cS({\sL}_1,{\sL}_2)$ there exists a simple
isometric operator $V$ in a Hilbert space ${\sH}$ and a unitary boundary pair $(\sL, \G)$
for $V$ such that the corresponding Weyl function $\Theta(\cdot)$ coincides with $s(\cdot)$.
\end{theorem}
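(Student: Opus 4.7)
The plan is to derive the direct statement as the Hilbert space specialization of Theorem~\ref{thm:General}, using Corollary~\ref{Cor:G1G2} for the bounded operator structure, and to obtain the converse as the Hilbert space specialization of Theorem~\ref{thm:Inv} with $\kappa=0$.

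First I would verify (i). Corollary~\ref{Cor:G1G2} already asserts that in the Hilbert space case both $V_2$ and $V_1^{-1}$ are everywhere defined contractive operators in $\mathbf{B}(\sH)$. Moreover, Theorem~\ref{thm:General}(i) gives $V_1=V_2^{-[*]}$; since in the Hilbert space setting the Kre\u{\i}n-space adjoint $[*]$ coincides with the ordinary Hilbert space adjoint $*$, this identity reads $V_1=V_2^{-*}$, which is equivalent to \eqref{V1V2}.

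Next I would handle (ii)--(iv). The key observation is that $\kappa_-(\sH)=0$, so by Proposition~\ref{P:kappagamma} both exceptional sets $\sigma_p(V_1)\cap\dD$ and $\sigma_p(V_2)\cap\dD_e$ are empty. Combining with \eqref{eq:cD} and item (i) of Theorem~\ref{thm:General} one therefore has $\cD=\dD$ and $\cD_e=\dD_e$. Items (ii) and (iii) of Theorem~\ref{thm:General} then give the holomorphy of $\gamma_1$ on all of $\dD$ and of $\gamma_2$ on all of $\dD_e$ as $\mathbf{B}$-valued functions, and item (vi) gives the resolvent identities \eqref{E:1.8} and \eqref{E:1.8.2} on these full domains. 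Item (iv) of Theorem~\ref{thm:General} places $\Theta$ in $\cS_\kappa(\sL_1,\sL_2)$ with $\kappa=0$, which by definition is the ordinary Schur class $\cS(\sL_1,\sL_2)$.

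For the converse, given $s\in\cS(\sL_1,\sL_2)=\cS_0(\sL_1,\sL_2)$, I would apply Theorem~\ref{thm:Inv} with $\kappa=0$: it furnishes a simple isometric operator $V$ in a Pontryagin space $\sH$ of negative index $\kappa=0$ and a unitary boundary pair $({\sL}_1\times{\sL}_2,\Gamma)$ whose Weyl function coincides with $s$ on its domain of holomorphy, which in the Schur class is all of $\dD$. A Pontryagin space with vanishing negative index is a Hilbert space, delivering the required realization. I expect no substantive obstacle, since the theorem is a specialization corollary of results already proved; the only point to double-check is that the construction inside Theorem~\ref{thm:Inv}, which invokes the realization Theorem~\ref{thm:Realiz} together with the Cayley-type shift $M^{(\alpha)}$ from Lemma~\ref{lem:Malpha} and the main transform $\cJ$, preserves the Hilbert space character of the state space when $\kappa=0$. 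This is immediate because Theorem~\ref{thm:Realiz} produces a state space with negative index equal to $\kappa$, while both $M^{(\alpha)}$ and $\cJ$ act within the fixed state space $\sH$ without enlarging its negative index.
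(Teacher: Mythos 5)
Your proposal is correct and follows essentially the same route as the paper: Corollary \ref{Cor:G1G2} plus Theorem \ref{thm:General} for items (i)--(iv) (with $\kappa=0$ forcing $\cD=\dD$, $\cD_e=\dD_e$), and Theorem \ref{thm:Inv} with $\kappa=0$ for the converse. One small nuance: Corollary \ref{Cor:G1G2} only asserts that $V_2$ and $V_1^{-1}$ are contractive operators, not that they lie in $\mathbf{B}(\sH)$; the everywhere-definedness is obtained, as in the paper, from the resolvent-set information in Theorem \ref{thm:General}(i) (e.g. $0\in\rho(V_1)$ gives $V_1^{-1}\in\mathbf{B}(\sH)$ and then $V_2=V_1^{-*}\in\mathbf{B}(\sH)$), which your argument already has at hand.
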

\begin{proof}
(i) According to Corollary \ref{Cor:G1G2} $V_1^{-1}$ and $V_2$ are contractive operators.
From item (i) in Theorem~\ref{thm:General} one concludes that
\[
\dD\subseteq  \rho(V_2^{-1}) \quad\textup{and}\quad \dD_e\subseteq \rho(V_1^{-1}).
\]
In particular, $0\in \rho(V_2^{-1})$ so that $V_2\in \mathbf{B}(\sH)$.
On the other hand, by Proposition~\ref{P:G1G2} one has the inclusion $V_2\subseteq  V_1^{-*}$.
Since $\dD_e\subseteq \rho(V_1^{-*})$, the equality \eqref{V1V2} must prevail and, hence,
$V_1^{-*}\in \mathbf{B}(\sH)$ $\Leftrightarrow$  $V_1^{-1}\in \mathbf{B}(\sH)$.

The assertions (ii)--(iv) are now obtained directly from Theorem~\ref{thm:General}.

The last statement follows from Theorem~\ref{thm:Inv} by taking $\kappa=0$.
\end{proof}

\subsection{Classification of unitary boundary pairs}
\label{subsec3.5}

We start by collecting some main properties of unitary boundary pairs in the next proposition.

\begin{proposition}\label{P:GeneralBpair}
Let $V$ be an isometric operator in the Pontryagin space ${\sH}$ and
let $({\sL},\G)$ be a unitary boundary pair for $V$. Then $\G$ and its
components $\Gamma_1$ and $\Gamma_2$ defined by~\eqref{eq:G_1}, \eqref{eq:G_2} admit the following properties:
\begin{enumerate}
\def\labelenumi{\rm (\roman{enumi})}

\item $\Gamma_1$ and $\Gamma_2$ are closed linear relations with
$\ran \Gamma_1=\sL_1$, $\ran \Gamma_2=\sL_2$ and, moreover, $\mul
\Gamma_1=\{0\}$ $\Leftrightarrow$ $\mul \Gamma_2=\{0\}$
$\Leftrightarrow$ $\mul \Gamma=\{0\}$;

\item $V_1=\ker\Gamma_1$ and $V_2=\ker\Gamma_2$ have nonempty resolvent sets and $V_1=V_2^{-[*]}$;

\item $V_*=\dom \Gamma$ admits the decompositions
\[
  V_*=V_1 \hplus \wh{\sN}_\l(V_*), \quad  \lambda\in \cD=\rho(V_1)\cap
  \dD,
\]
\[
  V_*=V_2 \hplus \wh{\sN}_\l(V_*), \quad \lambda\in
  \cD_e=\rho(V_2)\cap\dD_e .
\]
\end{enumerate}
\end{proposition}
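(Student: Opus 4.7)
The plan is to deduce (i) and (ii) as immediate consequences of earlier results in the paper, and to prove (iii) by an elementary resolvent decomposition based on $\lambda\in\rho(V_1)$ (respectively $\lambda\in\rho(V_2)$).

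For (i), items (i)--(iii) of Proposition \ref{P:G1G2} already supply closedness of $\Gamma_1$ and $\Gamma_2$, the surjectivities $\ran\Gamma_1=\sL_1$ and $\ran\Gamma_2=\sL_2$, and the equivalence of $\mul\Gamma_1=\{0\}$, $\mul\Gamma_2=\{0\}$, and $\mul\Gamma=\{0\}$; I would simply invoke these. For (ii), the inclusion $V_1\subseteq V_2^{-[*]}$ is contained in item (iv) of Proposition \ref{P:G1G2}, and the reverse inclusion together with $\rho(V_1)\cap\dD\neq\emptyset$ and $\rho(V_2)\cap\dD_e\neq\emptyset$ is item (i) of Theorem \ref{thm:General}, where these facts are obtained by transforming the pair via $M^{(\alpha)}$ into a unitary boundary pair whose main transform is an honest unitary colligation in the sense of Theorem \ref{T:UV}.

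The substantive content is the decomposition in (iii). I will prove the first identity; the second follows by interchanging the roles of $V_1$ and $V_2$ (equivalently, by applying the first identity to the transposed boundary pair $(\sL_2\oplus\sL_1,\Gamma^\top)$ and using $\ker\Gamma_1^\top=V_2^{-1}$ from Proposition \ref{prop:trans}). Fix $\lambda\in\cD=\rho(V_1)\cap\dD$ and take $\wh{f}=\{f,f'\}\in V_*$. Since $\lambda\in\rho(V_1)$, the resolvent $(V_1-\lambda I)^{-1}$ is a bounded everywhere defined operator on $\sH$, so I set $g:=(V_1-\lambda I)^{-1}(f'-\lambda f)$, which gives a vector $\wh{g}=\{g,\lambda g+(f'-\lambda f)\}\in V_1\subseteq V_*$. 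A direct calculation then shows $\wh{f}-\wh{g}=\{h_\lambda,\lambda h_\lambda\}$ with $h_\lambda:=f-g$, and this vector lies in $V_*$ as the difference of two elements of $V_*$; hence it belongs to $\wh{\sN}_\lambda(V_*)$. This yields $V_*=V_1+\wh{\sN}_\lambda(V_*)$.

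For directness of the sum, suppose $\wh{g}\in V_1\cap\wh{\sN}_\lambda(V_*)$. Then $\wh{g}=\{h_\lambda,\lambda h_\lambda\}$ with $h_\lambda\in\ker(V_1-\lambda I)$, and since $\lambda\in\rho(V_1)$ one has $\lambda\notin\sigma_p(V_1)$, forcing $h_\lambda=0$. I do not foresee any genuine obstacle: the technical work is already carried out earlier in the paper (in particular the $V_1=V_2^{-[*]}$ equality of Theorem \ref{thm:General}(i), which is the only nonroutine piece), and once it is invoked the decomposition in (iii) is a standard resolvent splitting of exactly the kind familiar from ordinary boundary triple theory.
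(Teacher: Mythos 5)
Your proposal is correct and follows essentially the same route as the paper: items (i) and (ii) are obtained by citing Proposition \ref{P:G1G2} and Theorem \ref{thm:General}, exactly as the paper does, and for (iii) the paper simply notes that the decomposition is a direct consequence of (ii) (citing Lemma~4.1 of \cite{HSSW07}), which is precisely the standard resolvent splitting $\wh f=\wh g+\{h_\lambda,\lambda h_\lambda\}$ with $g=(V_1-\lambda I)^{-1}(f'-\lambda f)$ that you write out, together with the trivial-intersection argument from $\ker(V_1-\lambda I)=\{0\}$. So you have just made explicit the routine step the paper delegates to a reference; no gap.
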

\begin{proof}
(i) These properties were proven in Proposition~\ref{P:G1G2}.

(ii) The fact that $\rho(V_1)$ and $\rho(V_2)$ are nonempty and the equality $V_2=V_1^{-*}$ were proven in Theorem \ref{thm:General}.

(iii) This is a direct consequence of (ii); see e.g. \cite[Lemma~4.1]{HSSW07}.
\end{proof}

\begin{remark}
Proposition \ref{P:GeneralBpair} shows that in a Pontryagin space
${\sH}$ every unitary boundary pair $({\sL},\G)$ (as well as its
transposed boundary pair) of an isometric operator $V$ can be seen
as an analog of so-called ($B$-)generalized boundary triple, since
the component mappings $\G_1$ and $\G_2$ are surjective and the
corresponding kernels $V_1=\ker \G_1$ and $V_2=\ker \G_2$ are closed
extensions of $V$ with nonempty resolvent sets; see \cite{DM95,DHMS06}
and \cite{DHM17,DHM2020a,DHM2020b} for some further developments.
\end{remark}

\begin{proposition}\label{P:1.2}
The following relations hold:
\begin{equation}\label{M2}
\sfS^\Theta_\m(\l):=\frac{I-\Theta(\m)^* \Theta(\l)}{1-\l\ov\m}=\g_1(\m)^\zx
\g_1(\l),\quad\l,\m\in\cD;
\end{equation}
\begin{equation}\label{M1}
\sfS^\Theta_\m(\l):=\frac{I-\Theta^\#(\m)^*
\Theta^\#(\l)}{1-\l\ov\m}=-\g_2(\m)^\zx \g_2(\l),\quad\l,\m\in\cD_e;
\end{equation}
\begin{equation}\label{M3}
\sfS^\Theta_\m(\l):=\frac{\Theta(\l)-\Theta^\#(\m)^* }{1-\l\ov\m}=-\g_2(\m)^\zx
\g_1(\l),\quad\l\in\cD,\text{ }\m\in\cD_e;
\end{equation}
\begin{equation}\label{M4}
\sfS^\Theta_\m(\l):=\frac{\Theta^\#(\l)-\Theta(\m)^*}{1-\l\ov\m}=\g_1(\m)^\zx
\g_2(\l),\quad\l\in\cD_e, \text{ }\m\in\cD.
\end{equation}
\end{proposition}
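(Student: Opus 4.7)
The plan is to derive all four identities \eqref{M2}--\eqref{M4} from a single source, namely the Green-type identity \eqref{E:1.1} of the unitary boundary pair $(\sL,\G)$, applied to two elements of $\G$ parametrised via the Weyl function. The two available parametrisations are \eqref{eq:WF2} (when the spectral parameter lies in $\cD$) and \eqref{eq:WF1} (when it lies in $\cD_e$). Throughout I use the Krein-to-Hilbert adjoint $\g_j(\m)^\zx\colon\sH\to\sL_j$ of $\g_j(\m)$, characterised by $[\g_j(\m)v,h]_\sH=(v,\g_j(\m)^\zx h)_{\sL_j}$.

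For \eqref{M2} I substitute into \eqref{E:1.1} the two elements $\{\wh f,\wh u\},\{\wh g,\wh v\}$ of $\G$ produced by \eqref{eq:WF2} with parameters $\l,\m\in\cD$ and boundary data $u_1,v_1\in\sL_1$. The $\sH$-side of \eqref{E:1.1} collapses to $(1-\l\ov\m)[\g_1(\l)u_1,\g_1(\m)v_1]_\sH$, while the $\sL$-side evaluates to $(u_1,v_1)_{\sL_1}-(\T(\l)u_1,\T(\m)v_1)_{\sL_2}=((I-\T(\m)^*\T(\l))u_1,v_1)_{\sL_1}$. Dividing by $1-\l\ov\m\neq 0$ and reading off the adjoint of $\g_1(\m)$ yields \eqref{M2}. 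Identity \eqref{M1} is proved in exactly the same way, but with both elements drawn from \eqref{eq:WF1}, parameters in $\cD_e$ and data $u_2,v_2\in\sL_2$. Because the positions of the boundary values $u_1,u_2$ are swapped relative to \eqref{eq:WF2}, the right-hand side of \eqref{E:1.1} now evaluates to $(\T^\#(\l)u_2,\T^\#(\m)v_2)_{\sL_1}-(u_2,v_2)_{\sL_2}=-((I-\T^\#(\m)^*\T^\#(\l))u_2,v_2)_{\sL_2}$, and this swap is precisely what produces the overall minus sign in \eqref{M1}.

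The mixed identities \eqref{M3} and \eqref{M4} follow by combining one representative from \eqref{eq:WF2} with one from \eqref{eq:WF1}. For \eqref{M3} the $\l$-representative comes from \eqref{eq:WF2} (so $\l\in\cD$, data $u_1\in\sL_1$) and the $\m$-representative from \eqref{eq:WF1} (so $\m\in\cD_e$, data $v_2\in\sL_2$); for \eqref{M4} the roles of $\cD$ and $\cD_e$ are reversed. Inserting these into \eqref{E:1.1} produces $(1-\l\ov\m)$ times a mixed inner product $[\g_i(\l)\cdot,\g_j(\m)\cdot]_\sH$ on the left and a cross-term $((\T^\#(\m)^*-\T(\l))u_1,v_2)_{\sL_2}$ (respectively $((\T^\#(\l)-\T(\m)^*)u_2,v_1)_{\sL_1}$) on the right. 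Reading off the relevant Krein adjoint then gives \eqref{M3} and \eqref{M4}.

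The only real obstacle is bookkeeping: one has to track carefully which component of $\wh u=(u_1,u_2)$ is the free boundary datum and which is its image under $\T$ or $\T^\#$ in each of the two representatives, and to apply the sesquilinearity of $[\cdot,\cdot]_\sH$ together with the Hilbert adjoints on $\sL_1,\sL_2$ in the correct order. No further analytic input is needed beyond the parametrisations \eqref{eq:WF2}, \eqref{eq:WF1} and the defining identity \eqref{E:1.1}; in particular the fact that the four right-hand sides are holomorphic in $\l$ on $\cD$ or $\cD_e$ is already built into Theorem~\ref{thm:General}.
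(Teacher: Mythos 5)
Your proposal is correct and follows essentially the same route as the paper: the paper also plugs the two elements of $\G$ furnished by \eqref{eq:WF2} and \eqref{eq:WF1} into the Green-type identity (in the specialized form \eqref{Eqgamma1}) and reads off the adjoint of the $\gamma$-field, proving \eqref{M2} and \eqref{M3} explicitly and noting that \eqref{M1}, \eqref{M4} are analogous. Your sign bookkeeping for \eqref{M1} and the cross-terms in \eqref{M3}, \eqref{M4} matches the paper's computation.
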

\begin{proof}
Let $\l,\m\in\cD$ and $u_1,v_1\in\sL_1$. Then by (\ref{eq:WF2})
$$
    \left\{{\begin{pmatrix}
      \gamma_1(\lambda)u_1 \\
      \l \gamma_1(\lambda)u_1
    \end{pmatrix}},
    \begin{pmatrix}
      u_1 \\
      \Theta(\lambda)u_1
    \end{pmatrix}\right\}\in \Gamma,\quad
        \left\{{\begin{pmatrix}
      \gamma_1(\mu)v_1 \\
      \mu\gamma_1(\mu)v_1
    \end{pmatrix}},
    \begin{pmatrix}
      v_1 \\
      \Theta(\mu)v_1
    \end{pmatrix}\right\}\in \Gamma
$$
and the identity \eqref{Eqgamma1} applied to these vectors yields
$$
(1-\l\ov\m)[\g_1(\l)u_1,\g_1(\m)v_1]_\sH=[u_1,v_1]_{\sL_1} - [\Theta(\l)u_1,\Theta(\m)v_1]_{\sL_2}.
$$
This proves the equality (\ref{M2}).

Let now $\l\in\cD$, $\m\in\cD_e$ and let $u_1\in\sL_1$ and
$v_2\in\sL_2$. Then by (\ref{eq:WF2})
$$
    \left\{{\begin{pmatrix}
      \gamma_1(\lambda)u_1 \\
      \lambda\gamma_1(\lambda)u_1
    \end{pmatrix}},
    \begin{pmatrix}
      u_1 \\
      \Theta(\lambda)u_1
    \end{pmatrix}\right\}\in \Gamma,\quad
        \left\{{\begin{pmatrix}
      \gamma_2(\mu)v_2 \\
      \mu\gamma_2(\mu)v_2
    \end{pmatrix}},
    \begin{pmatrix}
     \Theta^\#(\mu)v_2 \\
       v_2
    \end{pmatrix}\right\}\in \Gamma.
$$
By applying \eqref{Eqgamma1} to these vectors one arrives at
$$
(1-\l\ov\m)[\g_1(\l)u_1,\g_2(\m)v_2]_\sH=[u_1,\Theta^\#(\m)v_2]_{\sL_1} - [\Theta(\l)u_1,v_2]_{\sL_2}.
$$
This yields (\ref{M3}). The proof of \eqref{M1} and \eqref{M4}  is
analogous.
\end{proof}

\begin{proposition}\label{P:SpecialBpair}
Let $({\sL},\G)$ be a unitary boundary pair for an isometric operator $V$
in the Pontryagin space ${\sH}$. Then
\[
 \mul \Gamma_1=\ker \g_1(\l) \; (\l\in\cD) \quad\text{and}\quad \mul \Gamma_2=\ker
\g_2(\l) \;  (\l\in\cD_e).
\]
Moreover, the following statements are equivalent:
\begin{enumerate}
\def\labelenumi{\rm (\roman{enumi})}

\item $\Gamma$ is single valued (i.e. $\mul\Gamma=\{0\}$);

\item $\ran \Gamma$ is dense in $\sL_1\times\sL_2$;

\item  $\ker \g_1(\l)=\{0\}$ for some (equivalently for all) $\l\in\cD$;

\item $\ker \g_2(\l)=\{0\}$ for some (equivalently for all) $\l\in\cD_e$.
\end{enumerate}
If, in addition, the operator $V$ is simple then the conditions (i)--(iv)\! are\! equivalent\! to
\begin{enumerate}
  \item[(v)] $\bigcap\limits_{\mu\in\cD\cup\cD_e}\ker\sfS^\Theta_\m(\l)=\{0\}$ for some (equivalently for
all) $\l\in\cD\cup \cD_e$.
\end{enumerate}
\end{proposition}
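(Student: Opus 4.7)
The plan is to settle first the kernel identities $\mul\Gamma_j=\ker\gamma_j(\lambda)$ ($j=1,2$), deduce the equivalences (i)--(iv) as quick corollaries, and then reduce (v) via Proposition~\ref{P:1.2} to a density statement for the defect subspaces $\sN_\mu(V_*)$ that can be handled by the colligation reduction underlying Theorems~\ref{T:UV2} and~\ref{thm:General}.

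For the first identity I would unfold~\eqref{D:G1}: $\gamma_1(\lambda)u_1=0$ at $\lambda\in\cD$ is equivalent to the existence of $u_2\in\sL_2$ with $\{\widehat 0,\binom{u_1}{u_2}\}\in\Gamma$, and since $\widehat 0\in V=\ker\Gamma$ linearity turns this into $\binom{u_1}{u_2}\in\mul\Gamma$, i.e.\ $u_1\in\mul\Gamma_1$ by Proposition~\ref{P:G1G2}~(iii). Running the chain backwards yields the reverse inclusion, and~\eqref{D:G2} treats $\gamma_2$ symmetrically. These equalities are visibly $\lambda$-independent, which handles the ``some/all'' clauses in (iii)--(v). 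Now (i)$\Leftrightarrow$(ii) is Proposition~\ref{cor:ker_G}~(ii), while the chain (i)$\Leftrightarrow$(iii)$\Leftrightarrow$(iv) follows by combining the kernel identities with the equivalence $\mul\Gamma=\{0\}\Leftrightarrow\mul\Gamma_1=\{0\}\Leftrightarrow\mul\Gamma_2=\{0\}$ of Proposition~\ref{P:G1G2}~(iii).

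For (iii)$\Leftrightarrow$(v) under simplicity I fix $\lambda_0\in\cD$ (the case $\lambda_0\in\cD_e$ is symmetric, using \eqref{M1},~\eqref{M4} in place of~\eqref{M2},~\eqref{M3}). By the definition~\eqref{E:g12} the bounded operator $\gamma_1(\mu)$ is surjective onto $\sN_\mu(V_*)$ for $\mu\in\cD$, and similarly $\gamma_2(\mu)$ onto $\sN_\mu(V_*)$ for $\mu\in\cD_e$. Formulas~\eqref{M2},~\eqref{M3} of Proposition~\ref{P:1.2} therefore recast $u_1\in\bigcap_{\mu}\ker\sfS^\Theta_\mu(\lambda_0)$ as the single geometric condition that $\gamma_1(\lambda_0)u_1$ be $[\cdot,\cdot]_\sH$-orthogonal to $\sN_\mu(V_*)$ for every $\mu\in\cD\cup\cD_e$. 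Granting the density
\begin{equation}\label{eq:dense_sN}
 \overline{\spn}\{\sN_\mu(V_*):\mu\in\cD\cup\cD_e\}=\sH,
\end{equation}
non-degeneracy of the Pontryagin space forces $\gamma_1(\lambda_0)u_1=0$, so $u_1\in\ker\gamma_1(\lambda_0)=\mul\Gamma_1$; conversely any nonzero element of $\mul\Gamma_1=\ker\gamma_1(\lambda_0)$ lies trivially in every $\ker\sfS^\Theta_\mu(\lambda_0)$. Hence (v) is equivalent to $\mul\Gamma_1=\{0\}$, i.e.\ to (iii).

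The main obstacle is therefore~\eqref{eq:dense_sN}, which I would attack by the Möbius reduction employed in the proof of Theorem~\ref{thm:General}. Pick $\alpha\in\dD$ such that $V^{(\alpha)}=M^{(\alpha)}(V)$ is a single-valued isometry and $\mul V_2^{(\alpha)}=\{0\}$; Lemma~\ref{lem:Galpha} then yields a unitary boundary pair $(\sL,\Gamma^{(\alpha)})$ whose main transform, in view of Proposition~\ref{P:G1G2}~(v), is a genuine unitary colligation $\Delta^{(\alpha)}$. The identity~\eqref{equiv:sN} gives $\sN_\lambda(V_*)=\sN_{\mu^{(\alpha)}(\lambda)}(V_*^{(\alpha)})$ as subspaces of $\sH$, and $\mu^{(\alpha)}$ is a bijection of $\dD\cup\dD_e$ onto itself up to a single exceptional point negligible for closed spans; simplicity of $V$ therefore transfers to simplicity of $V^{(\alpha)}$. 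Theorem~\ref{T:UV2}~(iv) then delivers close connectedness of $\Delta^{(\alpha)}$, and the identities~\eqref{eq:span_D}--\eqref{eq:span_De} established in its proof, together with the density of $\sN_\mu(V_*^{(\alpha)})$ in $\sN_\mu$ noted there, give the $\alpha$-transformed version of~\eqref{eq:dense_sN}. Pulling back through $\mu^{(-\alpha)}$ yields~\eqref{eq:dense_sN} for $V$ itself, completing the proof.
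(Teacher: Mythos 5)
Your proposal is correct and, apart from one elaboration, follows the paper's own route: the kernel identities $\mul\Gamma_j=\ker\gamma_j(\lambda)$ are read off from \eqref{E:g12} and Definition \ref{def:gamma}, (i)$\Leftrightarrow$(ii) is Proposition \ref{cor:ker_G}(ii), (i)$\Leftrightarrow$(iii)$\Leftrightarrow$(iv) follows from Proposition \ref{P:G1G2}(iii), and for (v) you use Proposition \ref{P:1.2} exactly as the paper does -- your reformulation ``$u_1\in\bigcap_\mu\ker\sfS^\Theta_\mu(\lambda_0)$ iff $\gamma_1(\lambda_0)u_1$ is $[\cdot,\cdot]_\sH$-orthogonal to all $\ran\gamma_1(\mu)$, $\ran\gamma_2(\nu)$'' is the same content as the paper's identities \eqref{eq:may9f}--\eqref{eq:may9g}. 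Where you genuinely add something is the density statement $\ov\spn\{\ran\gamma_1(\mu),\ran\gamma_2(\nu):\mu\in\cD,\,\nu\in\cD_e\}=\sH$: the paper simply asserts it as a consequence of simplicity, whereas you derive it by the M\"obius reduction to a unitary colligation together with Theorem \ref{T:UV2}(iv) and \eqref{eq:span_D}--\eqref{eq:span_De}; that is a legitimate and arguably needed justification. The only soft spot in your write-up is the unproved claim that the single exceptional point of $\mu^{(\alpha)}$ is ``negligible for closed spans'' when transferring simplicity from $V$ to $V^{(\alpha)}$ (the eigenvalue condition in Definition \ref{Def:simple} transfers immediately by Lemma \ref{lem:Malpha}(iii), but the span condition needs a word). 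This is true and easy to patch: the exceptional point is $\lambda_0=1/\alpha\in\cD_e=\rho(V_2)\cap\dD_e$, and for $f_0\in\sN_{\lambda_0}$ one has $f_0+(\lambda-\lambda_0)(V_2-\lambda)^{-1}f_0\in\sN_\lambda$ for $\lambda\in\rho(V_2)$ near $\lambda_0$, which converges to $f_0$ as $\lambda\to\lambda_0$, so omitting that point does not change the closed span; the same device handles the finitely many points of $(\dD\cup\dD_e)\setminus(\cD\cup\cD_e)$ if one prefers to argue with the full defect subspaces. With that remark inserted your argument is complete, and at this step it is in fact more detailed than the paper's.
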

\begin{proof}
(i) $\Leftrightarrow$ (ii) This is item (ii) in Proposition \ref{cor:ker_G}.

(i) $\Leftrightarrow$ (iii), (iv) The formula \eqref{E:g12} and Definition \ref{def:gamma} of the $\g$-fields
shows that $\ker \g_1(\l)=\mul\G_1$ for $\l\in\cD$ and $\ker \g_2(\l)=\mul\G_2$ for $\l\in\cD_e$.
Now the statement follows from item (i) in Proposition \ref{P:GeneralBpair}.

To prove the statement in (v) first observe that with $\lambda\in\cD$ the inclusions
\[
  \ker\gamma_1(\lambda)\subseteq \ker\sfS^\Theta_\m(\l)\;\textup{ for all }\; \mu\in\cD\cup \cD_e
\]
are clear from \eqref{M2} and \eqref{M3} in Proposition \ref{P:1.2}. Thus
\begin{equation}\label{eq:may9e}
  \ker\gamma_1(\lambda)\subseteq \bigcap\limits_{\mu\in\cD\cup\cD_e}\ker\sfS^\Theta_\m(\l), \quad \l\in\cD.
\end{equation}
As to the reverse inclusion apply Proposition \ref{P:1.2} again to see that for all $\l,\m\in\cD$ and $u_1,v_1\in\sL_1$,
\begin{equation}\label{eq:may9f}
  (\sfS^\Theta_\m(\l)u_1,v_1)_{\sL_1}=[\gamma_1(\lambda)u_1,\gamma_1(\m)v_1]_\sH
\end{equation}
and for all $\l\in\cD$, $\nu\in\cD_e$ and $u_1\in\sL_1$,  $v_2\in\sL_2$,
\begin{equation}\label{eq:may9g}
  (\sfS^\Theta_\m(\l)u_1,v_2)_{\sL_2}=[\gamma_1(\lambda)u_1,\gamma_2(\mu)v_2]_\sH.
\end{equation}
Now assume that the operator $V$ is simple.
Then
\[
\ov\spn\{\gamma_1(\m)\sL_1,\, \gamma_2(\nu)\sL_2:\,\mu\in\cD,\,\nu\in\cD_e\}=\sH
\]
and by virtue of~\eqref{eq:may9f} and~\eqref{eq:may9g} the reverse inclusion in~\eqref{eq:may9e} follows.
This proves the equivalence
(iv) $\Leftrightarrow$ (v) for $\lambda\in\cD$ when $V$ is simple. Similarly one proves the equivalence (iii) $\Leftrightarrow$ (v)
for $\lambda\in\cD_e$.
\end{proof}

\begin{proposition}\label{P:OrdinaryBpair}
Let $({\sL},\G)$ be a unitary boundary pair for an isometric operator $V$
in the Pontryagin space ${\sH}$. Then the following statements are
equivalent:
\begin{enumerate}
\def\labelenumi{\rm (\roman{enumi})}

\item $\ran \Gamma=\sL_1\times\sL_2$;

\item $\dom \Gamma=V^{-[*]}$ and $\mul\G=\{0\}$;

\item $\ker \g_1(\l)=\{0\}$ and $\ran \g_1(\l)$ is closed for some (equivalently for all) $\l\in\cD$;

\item $\ker \g_2(\l)=\{0\}$ and $\ran \g_2(\l)$ is closed for some (equivalently for all) $\l\in\cD_e$.
\end{enumerate}
If $0\in \rho(\sfS^\Theta_\l(\l))$ for some  $\l\in\cD\cup \cD_e$, then  the conditions (i)--(iv) hold.

If one of the condition (i)--(iv) is satisfied then $({\sL},\G)$ is an ordinary boundary pair for $V$.
\end{proposition}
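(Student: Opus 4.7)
The plan is to first establish (i)~$\Leftrightarrow$~(ii), then handle the pair (ii)~$\Leftrightarrow$~(iii) in detail and obtain (ii)~$\Leftrightarrow$~(iv) by the symmetric argument, then address the sufficient condition via the Gram-matrix identities in Proposition~\ref{P:1.2}, and finally read off the ordinary boundary pair conclusion.

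First, (i)~$\Leftrightarrow$~(ii) is exactly item~(iii) of Proposition~\ref{cor:ker_G}, so nothing needs to be added. For (ii)~$\Rightarrow$~(iii), I use Proposition~\ref{P:SpecialBpair} to pass from $\mul\G=\{0\}$ to $\ker\g_1(\l)=\mul\G_1=\{0\}$ for every $\l\in\cD$; for the range statement, observe that since $\dom\G=V^{-[*]}$ is closed in $\sH^2$, the kernel $\sN_\l(V_*)=\ker(V_*-\l I)$ is closed in $\sH$, and by the definition of $\g_1(\l)$ (see \eqref{D:G1} and Theorem \ref{thm:General}(ii)) we have $\ran\g_1(\l)=\sN_\l(V_*)$, which is therefore closed.

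The converse direction (iii)~$\Rightarrow$~(ii) is the main technical step. From $\ker\g_1(\l)=\{0\}$ I recover $\mul\G=\{0\}$ via Proposition~\ref{P:SpecialBpair}. For the domain assertion, I invoke the decomposition
\[
 V_*=V_1\hplus\wh{\sN}_\l(V_*),\quad \l\in\cD,
\]
from Proposition~\ref{P:GeneralBpair}(iii). Since $\l\in\rho(V_1)$ by Theorem~\ref{thm:General}(i), the projection of $V_*$ onto $V_1$ along $\wh\sN_\l(V_*)$ is given explicitly by $\wh f=(f,f')\mapsto \wh g=(g,g')$ with $g=(V_1-\l I)^{-1}(f'-\l f)$ and $g'=\l g+(f'-\l f)$, which is continuous on $V_*$ with the norm inherited from $\sH^2$. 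Then for a Cauchy (actually convergent) sequence $\wh f_n\in V_*$ converging to $\wh f\in\sH^2$, the components $\wh g_n\in V_1$ converge to some $\wh g\in V_1$ (closedness of $V_1$, Proposition~\ref{P:GeneralBpair}(ii)), and the remainders $\wh f_{0,n}=\wh f_n-\wh g_n\in\wh\sN_\l(V_*)$ converge to $\wh f_0=(f_0,\l f_0)$ with $f_0\in\ov{\sN_\l(V_*)}=\ran\g_1(\l)$, where closedness is the hypothesis (iii). Hence $\wh f_0\in \wh\sN_\l(V_*)\subseteq V_*$ and $\wh f=\wh g+\wh f_0\in V_*$, so $V_*=\dom\G$ is closed and thus $V_*=V^{-[*]}$ by Proposition~\ref{cor:ker_G}(i). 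The equivalences (ii)~$\Leftrightarrow$~(iv) follow by exactly the same argument after replacing $V_1$, $\g_1$, $\cD$ by $V_2$, $\g_2$, $\cD_e$ and using the second decomposition in Proposition~\ref{P:GeneralBpair}(iii); the independence of the chosen point $\l$ then falls out a posteriori.

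For the sufficient condition, suppose first that $\l\in\cD$ and $0\in\rho(\sfS^\Theta_\l(\l))$. Identity \eqref{M2} in Proposition~\ref{P:1.2} gives, for all $u\in\sL_1$,
\[
 (\sfS^\Theta_\l(\l)u,u)_{\sL_1}=[\g_1(\l)u,\g_1(\l)u]_\sH,
\]
together with $\sfS^\Theta_\l(\l)=\g_1(\l)^{[*]}\g_1(\l)$. Invertibility of $\sfS^\Theta_\l(\l)\in\mathbf{B}(\sL_1)$ then yields a constant $c>0$ with $\|u\|_{\sL_1}\le c\|\g_1(\l)^{[*]}\g_1(\l)u\|_{\sL_1}\le c\|\g_1(\l)^{[*]}\|\,\|\g_1(\l)u\|_\sH$; hence $\g_1(\l)$ is bounded below, so $\ker\g_1(\l)=\{0\}$ and $\ran\g_1(\l)$ is closed, giving (iii). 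The case $\l\in\cD_e$ is handled identically using \eqref{M1}. Finally, when any of (i)--(iv) holds, the combination $\dom\G=V^{-[*]}$, $\ran\G=\sL_1\times\sL_2$, and $\mul\G_1=\mul\G_2=\mul\G=\{0\}$ is exactly the definition given just after \eqref{eq:G_2} of an ordinary boundary triple for $V$.

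The hard part will be the continuous splitting argument in the proof of (iii)~$\Rightarrow$~(ii): verifying that the direct sum $V_1\hplus\wh\sN_\l(V_*)$ of the two closed subspaces is closed in $\sH^2$. The key is that the decomposition is governed by the bounded resolvent $(V_1-\l I)^{-1}\in\mathbf{B}(\sH)$, which supplies the continuous projection and thereby reduces the closedness of the sum to closedness of the summands.
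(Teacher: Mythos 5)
Your proof is correct and follows essentially the same route as the paper: (i)$\Leftrightarrow$(ii) via Proposition \ref{cor:ker_G}, (ii)$\Leftrightarrow$(iii),(iv) by identifying $\ran\g_1(\l)=\sN_\l(V_*)$ and using the decompositions of $V_*$ from Proposition \ref{P:GeneralBpair}(iii) together with Proposition \ref{P:SpecialBpair}, and the sufficient condition via the identities \eqref{M2}, \eqref{M1} of Proposition \ref{P:1.2}. Your explicit continuous-projection argument via $(V_1-\l I)^{-1}$ merely fills in the closedness step that the paper states without detail, so there is nothing to add.
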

\begin{proof}
(i) $\Leftrightarrow$ (ii) This is item (iii) in Proposition \ref{cor:ker_G}.

(ii) $\Leftrightarrow$ (iii), (iv) Definition \ref{def:gamma} of the $\g$-fields
shows that $\ran \g_1(\l)=\sN_\lambda(V_*)$ for $\l\in\cD$ and $\ran \g_2(\l)=\sN_\lambda(V_*)$ for $\l\in\cD_e$.
Now the decompositions of $V_*=\dom \G$ in item (iii) of Proposition \ref{P:GeneralBpair} imply that
$\sN_\lambda(V_*)$ is closed for some $\l\in\cD\cup\cD_e$ if and only if $V_*=V^{-[*]}$.
This combined with Proposition \ref{P:SpecialBpair} gives the stated equivalences.

The last implication follows from Proposition \ref{P:1.2}.
Indeed, if $0\in \rho(\sfS^\Theta_\l(\l))$ for some  $\l\in\cD$,
then by Proposition~\ref{P:SpecialBpair}  $\ker \g_1(\l)=\{0\}$ and by \eqref{M2} $\ran \g_1(\l)$ is closed.
Similarly, if $0\in \rho(\sfS^\Theta_\l(\l))$ for some  $\l\in\cD_e$, then again by Proposition~\ref{P:SpecialBpair}
$\ker \g_2(\l)=\{0\}$ and by \eqref{M1} $\ran \g_2(\l)$ is closed.

Finally, the fact that $({\sL},\G)$ is an ordinary boundary pair for $V$ is clear from the properties in (ii).
\end{proof}

The next example shows that the condition (v) in Proposition~\ref{P:SpecialBpair} cannot be replaced by a single condition $\ker\sfS^\Theta_\l(\l)=\{0\}$.

\begin{example}
  Let $\sH=\dC^4$ with the skew-diagonal fundamental symmetry $J=(\delta_{j,5-k})_{j,k=1}^4$, where $\delta_{j,k}$ is the Kronecker delta and let us set
  $e_j=(\delta_{j,k})_{k=1}^4$, $(j=1,2,3,4)$.
Let $V$ be an isometry in $\sH$ which maps $e_1$ into $e_2$.
Then the defect subspaces of $V$
\begin{equation}\label{eq:may11a}
\sN_\lambda=\left\{(f_1,f_2, f_3,\lambda f_3)^\top:\, f_1,f_2,f_3\in\dC\right\}
 \end{equation}
are degenerate for all $\lambda\in\dC$ and thus, the operator $V$ is not standard.

The linear relation $V^{-[*]}$ consists of vectors
\[
\wh f=\begin{pmatrix}
f \\
f'
\end{pmatrix},\quad
\mbox{ where }\quad f=\sum_{j=1}^{4}f_j e_j,\quad f'=\sum_{j=1}^{4}f'_j e_j,\quad f_3'=f_4.
\]
Therefore, the left part of the identity~\eqref{E:1.1} for $\wh f=\wh g$ takes the form
\[
(f_1-f_2')\overline{f_4}
+f_2\overline{f_3}+f_3\overline{f_2}+f_4(\overline{f_1}-\overline{f_2'})-f_1'\overline{f_4'}-f_4'\overline{f_1'}
\]
and can be rewritten in the diagonal form
\[
\frac12\hspace{-2pt}\left\{|f_1'-\!f_4'|^2\hspace{-3pt}-\!|f_1'+f_4'|^2\hspace{-3pt} + |f_1-\! f_2'+f_4|^2\hspace{-3pt}
 -\! |f_1-f_2'-\!f_4|^2\hspace{-3pt}+|f_2+f_3|^2\hspace{-3pt} -\! |f_2-\! f_3|^2\right\}\hspace{-2pt}.
\]
 Hence a single-valued boundary triple $(\sL_1\oplus\sL_2, \G_1,\G_2)$ can be chosen as follows
 \begin{equation}\label{eq:may11b}
   \sL_1=\sL_2=\dC^3,\quad
   \G_1 \wh f=\frac{1}{\sqrt{2}}\begin{pmatrix}
                            f'_1-f'_4\\
                           f_1-f'_2+f_4 \\
                           f_2+f_3
                         \end{pmatrix},\quad
   \G_2 \wh f=\frac{1}{\sqrt{2}}\begin{pmatrix}
                            f'_1+f'_4\\
                            f_1-f'_2-f_4\\
                           f_2-f_3
                         \end{pmatrix}.
 \end{equation}
 Then for $|\l|<1$ one obtains from~\eqref{eq:may11a} and \eqref{eq:may11b}
 $$
 \Theta(\l)=\frac{1}{3\l^2}\begin{pmatrix}
                         \l^2&2\l^3&2\l^4 \\
                         2\l&\l^2&-2\l^3 \\
                         2&-2\l&\l^2
                       \end{pmatrix}
 $$
 and hence
 $$
    \sfS^\Theta_\omega(\l)=\frac{-2}{9\l^2\ov\omega^2}\Omega^*
                       \begin{pmatrix}
                         2(1+2\l\ov\omega)&-(2+\l\ov\omega)&1-\l\ov\omega \\
                         -(2+\l\ov\omega)&2(1-\l\ov\omega)&-(1+2\l\ov\omega) \\
                         1-\l\ov\omega&-(1+2\l\ov\omega)&-2(2+\l\ov\omega)
                       \end{pmatrix}\Lambda,
       $$
where $\Lambda=\mbox{diag }(1,\lambda,\lambda^2)$, $\Omega=\mbox{diag }(1,\omega,\omega^2)$.
Notice that in this example $\det \sfS^\Theta_\omega(\l)\equiv 0$ for all $\l,\omega\in\dD\backslash\{0\}$, while
$\bigcap\limits_{\omega\in\dD\backslash\{0\}}\ker  \sfS^\Theta_{\omega}(\l)=\{0\}$.
In fact, for every pair $\omega_1,\omega_2 \in\dD\backslash\{0\}$ one gets
$\ker  \sfS^\Theta_{\omega_1}(\l)\cap\ker  \sfS^\Theta_{\omega_2}(\l)=\{0\}$.
\end{example}

\begin{remark}
Let $A$ be a closed symmetric operator in a Pontryagin space $\sH$ with equal defect numbers, and let $\pm i\notin\s_p(A)$.
Then its Cayley transform $V=(A-iI)(A+iI)^{-1}$ is an isometric operator in $\sH$.

Let $(\sL_1\oplus\sL_2,\G)$ be a boundary pair for $V$ with $V_*=\dom \G$ such that $\sL_1=\sL_2=:\cH$.
Define the Kre\u{\i}n spaces $(\sH^2,[\![\cdot,\cdot]\!]_{\sH^2})$ and $(\cH^2,[\![\cdot,\cdot]\!]_{\cH^2})$ with the inner products
\[
    [\![\wh f,\wh f]\!]_{\sH^2}= -i\left([f',f]_\sH-[f,f']_\sH\right),\quad \wh f=
    \begin{pmatrix}
      f\\f'
    \end{pmatrix}\in \sH^2,
\]
\[
    [\![\wh u,\wh u]\!]_{\cH^2}= -i\left((u',u)_\cH-(u,u')_\cH\right),\quad \wh u=
    \begin{pmatrix}
      u\\u'
    \end{pmatrix}\in \cH^2.
\]
Then the Cayley transform determines the unitary operator from the Kre\u{\i}n space $(\sH^2,[\![\cdot,\cdot]\!]_{\sH^2})$ to the
Kre\u{\i}n space $(\sH^2,J_{\sH^2})$ defined in Section \ref{subsec3.1},
\[
    C=\frac{1}{\sqrt{2}}\begin{pmatrix}
                          i&1 \\
                          -i&1
                        \end{pmatrix}:(\sH^2,[\![\cdot,\cdot]\!]_{\sH^2})\to (\sH^2,J_{\sH^2})
\]
and similarly with $\sL=\sL_1\times\sL_2$ it determines a unitary mapping from the Kre\u{\i}n space $(\cH^2,[\![\cdot,\cdot]\!]_{\cH^2})$ to the
Kre\u{\i}n space $(\sL,J_{\sL})$,
\[
    C=\frac{1}{\sqrt{2}}\begin{pmatrix}
                          i&1 \\
                          -i&1
                        \end{pmatrix}:(\cH^2,[\![\cdot,\cdot]\!]_{\cH^2})\to (\sL,J_{\sL}).
\]
It follows that the linear relation
\[
\wt \G=C^{-1}\circ\G\circ C
=\left\{\left\{\wh f,
    \wh u\right\}:=\left\{
    \begin{pmatrix}
      ig'-ig\\g'+g
    \end{pmatrix},
        \begin{pmatrix}
      iv'-iv\\v'+v
    \end{pmatrix}\right\}: \left\{
    \begin{pmatrix}
      g\\g'
    \end{pmatrix},
    \begin{pmatrix}
      v\\v'
    \end{pmatrix}\right\}\in\Gamma\right\}
 \]
is unitary from $(\sH^2,[\![\cdot,\cdot]\!]_{\sH^2})$ to $(\cH^2,[\![\cdot,\cdot]\!]_{\cH^2})$ with the kernel $\ker \wt \G=A$ and the domain
\[
\dom \wt \G=C^{-1}V_*=\left\{\wh f=
    \begin{pmatrix}
      ig'-ig\\g'+g
    \end{pmatrix}:
    \begin{pmatrix}
      g\\g'
    \end{pmatrix}\in V_*\right\}.
 \]
Since the mapping $\wt\Gamma$ is isometric from $(\sH^2,[\![\cdot,\cdot]\!]_{\sH^2})$ to $(\cH^2,[\![\cdot,\cdot]\!]_{\cH^2})$ the following (Green's) identity
\[
[f',f]_\sH-[f,f']_\sH=[u',u]_\cH-[u,u']_\cH
\]
holds for all $\{\wh f,\wh u\}\in\wt\Gamma$ and
due to \cite{DHMS06,DHM17} the unitarity of $\wt\Gamma$ means that $(\cH^2,\wt \G)$ is a unitary boundary pair for the symmetric operator $A$.
This boundary pair becomes ordinary when the mapping $\wt\Gamma$ is surjective or, equivalently, when $(\cH^2, \G)$ is an ordinary  boundary pair for $V$.
\end{remark}

Finally the main results in this subsection are specialized to unitary boundary pairs of Hilbert space isometries.
In a Hilbert space setting the properties of the $\gamma$-fields can be connected more directly to the properties
of the Weyl function.

\begin{proposition}\label{SpecialHilbert}
Let $({\sL},\G)$ be a unitary boundary pair for an isometric operator $V$
in the Pontryagin space ${\sH}$. Then
\[
 \mul \Gamma_1=\ker \g_1(\l), \; \l\in\dD, \quad\text{and}\quad \mul \Gamma_2=\ker
\g_2(\l), \;  \l\in\dD_e,
\]
and $({\sL},\G)$ admits the following further properties.
\begin{itemize}
     \item[(a)] The following statements are equivalent:
\begin{enumerate}
\def\labelenumi{\rm (\roman{enumi})}

\item $\Gamma$ is single valued (i.e. $\mul\Gamma=\{0\}$)

\item $\ran \Gamma$ is dense in $\sL_1\times\sL_2$;

\item  $\ker \g_1(\l)=\{0\}$ for some (equivalently for all) $\l\in\dD$

\item $\ker \g_2(\l)=\{0\}$ for some (equivalently for all) $\l\in\dD_e$;
\item $\ker\sfS^\Theta_\l(\l)=\{0\}$
for some (equivalently for all) $\l\in\dD\cup \dD_e$.
\end{enumerate}

    \item[(b)] Moreover, the following statements are equivalent:
\begin{enumerate}
\def\labelenumi{\rm (\roman{enumi})}
\item $({\sL},\G)$ reduces to an ordinary boundary triple $({\sL},\G_1,\G_2)$ for $V$;

\item $\dom \Gamma=V^{-[*]}$ and $\mul\G=\{0\}$;

\item $\ran \Gamma=\sL_1\times\sL_2$;

\item $\ker \g_1(\l)=\{0\}$ and $\ran \g_1(\l)$ is closed for some (equivalently for all) $\l\in\dD$;

\item $\ker \g_2(\l)=\{0\}$ and $\ran \g_2(\l)$ is closed for some (equivalently for all) $\l\in\dD_e$;

\item $0\in \rho(\sfS^\Theta_\l(\l))$, i.e., $\|\Theta(\lambda)\|<1$ for some (equivalently for
all) $\l\in\dD\cup \dD_e$.
\end{enumerate}
\end{itemize}
\end{proposition}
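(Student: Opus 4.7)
The plan is to deduce Proposition~\ref{SpecialHilbert} from the two preceding classification results (Propositions~\ref{P:SpecialBpair} and~\ref{P:OrdinaryBpair}) by exploiting the extra structure available in the Hilbert space setting. First I would invoke Corollary~\ref{Cor:G1G2} together with Theorem~\ref{thm:Hilbert} to see that $V_1^{-1}$ and $V_2$ are contractions in $\mathbf{B}(\sH)$ with spectra contained in $\overline{\dD}$; consequently $\cD=\dD$ and $\cD_e=\dD_e$, so that the defect subspaces $\sN_\lambda(V_*)$, the $\gamma$-fields $\gamma_1(\lambda)$, $\gamma_2(\lambda)$, and the Weyl function $\Theta(\lambda)$ are holomorphic on all of $\dD$ (resp.\ $\dD_e$). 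The preliminary identity $\mul\Gamma_j=\ker\gamma_j(\lambda)$ is then exactly the first assertion of Proposition~\ref{P:SpecialBpair}.

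For part (a), the equivalences (i)--(iv) come directly from Proposition~\ref{P:SpecialBpair} applied with $\cD=\dD$ and $\cD_e=\dD_e$. The new ingredient is (v). For $\lambda\in\dD$ identity~\eqref{M2} gives $\sfS^\Theta_\lambda(\lambda)=\gamma_1(\lambda)^{[*]}\gamma_1(\lambda)$; since $\sH$ is a Hilbert space one has $J_\sH=I$, so the Kre\u{\i}n adjoint reduces to the Hilbert adjoint and $\sfS^\Theta_\lambda(\lambda)=\gamma_1(\lambda)^*\gamma_1(\lambda)\geq 0$. The elementary Hilbert-space identity $\ker(A^*A)=\ker A$ then yields $\ker\sfS^\Theta_\lambda(\lambda)=\ker\gamma_1(\lambda)$, which proves (v)$\Leftrightarrow$(iii); the analogous argument for $\lambda\in\dD_e$ starts from~\eqref{M1} in the form $-\sfS^\Theta_\lambda(\lambda)=\gamma_2(\lambda)^*\gamma_2(\lambda)$ and produces (v)$\Leftrightarrow$(iv). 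No simplicity assumption on $V$ is required here, in contrast to Proposition~\ref{P:SpecialBpair}(v), precisely because positivity of $\sfS^\Theta_\lambda(\lambda)$ renders the intersection over $\mu$ redundant.

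For part (b), the equivalences (i)--(v) follow directly from Proposition~\ref{P:OrdinaryBpair} with $\cD$ and $\cD_e$ replaced by $\dD$ and $\dD_e$. To incorporate (vi), observe that for $\lambda\in\dD$ the non-negative selfadjoint operator $\sfS^\Theta_\lambda(\lambda)=\gamma_1(\lambda)^*\gamma_1(\lambda)$ satisfies $0\in\rho(\sfS^\Theta_\lambda(\lambda))$ if and only if $\gamma_1(\lambda)$ is bounded below, i.e.\ injective with closed range, which is exactly (iv). The reformulation in terms of $\|\Theta(\lambda)\|<1$ is then immediate from $\sfS^\Theta_\lambda(\lambda)=(1-|\lambda|^2)^{-1}(I-\Theta(\lambda)^*\Theta(\lambda))$ and the standard fact that $I-\Theta(\lambda)^*\Theta(\lambda)$ is invertible precisely when $\|\Theta(\lambda)\|<1$. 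The case $\lambda\in\dD_e$ is handled identically using $-\sfS^\Theta_\lambda(\lambda)=\gamma_2(\lambda)^*\gamma_2(\lambda)$ and $\Theta^\#(\lambda)=\Theta(1/\bar\lambda)^*$. The passage between the ``for some'' and ``for all'' formulations of~(vi) is automatic, since the remaining equivalent conditions (i)--(v) do not depend on $\lambda$.

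The main (and rather modest) obstacle is bookkeeping: one must consistently identify $\gamma_j(\lambda)^{[*]}$ with the Hilbert adjoint $\gamma_j(\lambda)^*$ and track the sign change between~\eqref{M2} and~\eqref{M1}. Once these identifications are in place, every implication reduces to standard Hilbert-space facts about non-negative operators, and no genuinely new analytic input beyond what Propositions~\ref{P:SpecialBpair} and~\ref{P:OrdinaryBpair} already provide is needed.
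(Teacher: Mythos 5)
Your proposal is correct and follows essentially the same route as the paper: parts (a)(i)--(iv) and (b)(ii)--(v) are quoted from Propositions~\ref{P:SpecialBpair} and~\ref{P:OrdinaryBpair}, and the new items (a)(v), (b)(vi) are handled exactly as in the text via the identities \eqref{M2}, \eqref{M1} with $\l=\m$, which in the Hilbert space case give $\sfS^\Theta_\l(\l)=\pm\,\g_j(\l)^*\g_j(\l)$ and hence $\ker\sfS^\Theta_\l(\l)=\ker\g_j(\l)$, respectively $0\in\rho(\sfS^\Theta_\l(\l))\Leftrightarrow\g_j(\l)$ bounded below $\Leftrightarrow\|\Theta(\l)\|<1$. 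The only cosmetic difference is that the paper records (b)(i)$\Leftrightarrow$(ii) as holding by the very definition of an ordinary boundary triple, whereas you fold it into the citation of Proposition~\ref{P:OrdinaryBpair}; this does not affect correctness.
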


\begin{proof}
(a) The equivalences (i)--(iv) follow from Proposition \ref{P:SpecialBpair}; see also Theorem \ref{thm:Hilbert}.
To see the equivalence with item (v) apply Proposition \ref{P:1.2} with $\lambda=\mu$:
\begin{equation}\label{M01}
\sfS^\Theta_\l(\l)=\frac{I-\Theta(\l)^* \Theta(\l)}{1-|\l|^2}=\g_1(\l)^*\g_1(\l)\geq 0, \quad\l\in\dD;
\end{equation}
\begin{equation}\label{M02}
\sfS^\Theta_\l(\l)=\frac{I-\Theta^\#(\l)^* \Theta^\#(\l)}{1-|\l|^2}=-\g_2(\l)^*\g_2(\l)\leq 0, \quad\l\in\dD_e.
\end{equation}
In the present Hilbert space case these identities lead to
\[
 \ker \sfS^\Theta_\l(\l)=\ker \g_1(\l), \; \l\in\dD; \quad
 \ker \sfS^\Theta_\l(\l)=\ker \g_2(\l),\; \l\in\dD_e.
\]
This implies the equivalence of (iii), (iv) and (v) in part (a).

(b) Here the equivalence of (i) and (ii) holds just by the definition of an ordinary boundary triple (see \cite{MM03}).
The equivalences (ii)--(v) are obtained from Proposition \ref{P:OrdinaryBpair} (cf. also Theorem \ref{thm:Hilbert}).
To see the equivalence with item (vi) apply the identities \eqref{M01}, \eqref{M02}:
\[
\begin{array}{ll}
 0\in\rho(I-\Theta(\l)^* \Theta(\l))        &\Longleftrightarrow\quad 0\in\rho(\g_1(\l)^*\g_1(\l)), \quad \l\in\dD; \\
 0\in\rho(I-\Theta^\#(\l)^* \Theta^\#(\l))  &\Longleftrightarrow\quad 0\in\rho(\g_2(\l)^*\g_2(\l)),\quad \l\in\dD_e.
\end{array}
\]
Thus $I-\Theta(\l)^* \Theta(\l)$ and $I-\Theta^\#(\l)^* \Theta^\#(\l)$ are uniformly positive or, equivalently,
$\|\Theta(\lambda)\|<1$ and $\|\Theta^\#(\l))\|<1$. This completes the proof.
\end{proof}

Notice that part (b) of Proposition \ref{SpecialHilbert} contains the properties and generality that can be attained
when applying (ordinary) boundary triples for isometric operators which have been introduced and studied in \cite{MM03,MM04}.
\begin{remark}
  An analog of boundary triple  in scattering form~\eqref{E:1.1} is encountered in~\cite{DD19},
  where extension theory of multiplication operators in indefinite de Branges spaces was developed.
  The role of the Weyl function in that work is played by de~Branges matrix.
\end{remark}

\section{Extension theory and generalized coresolvents}\label{sec4}

An extension $\wt V$ of the isometric operator $V$ is called \textit{proper}, if $V\subsetneq \wt V\subsetneq V^{-[*]}$.
The set  of proper extensions of $V$ 
was parametrized in~\cite{B13} via an ordinary boundary triple.
In the present section we consider extensions $\wt V$ of the isometric operator $V$, which are proper with respect to a given unitary boundary pair $(\sL,\Gamma)$, i.e.
\begin{equation}
  V\subsetneq \wt V\subsetneq  V_*=\dom\Gamma.
\end{equation}
For such extensions we prove sufficient conditions for regularity of a point $\lambda$, find formulas for their coresolvents and then apply them for a description of generalized coresolvents of the isometric operator $V$.

\subsection{A preparatory lemma}

\begin{lemma}\label{L:Ggf}
Let $V:{\sH}\to{\sH}$ be an isometric operator and let $\Pi=({\sL},\Gamma)$ be a unitary boundary pair
for $V$. Then:
\begin{enumerate}
\def\labelenumi{\rm (\roman{enumi})}
\item For every $\begin{pmatrix}
  f_1\\f_1'
\end{pmatrix}\in V_1$ and $\l\in\cD$, as defined in \eqref{eq:gam_sharp} one has
\begin{equation}\label{gamma1}
\sk\{{\begin{pmatrix} f_1\\f_1'
\end{pmatrix},\frac{1}{\l}\g_2^\#(\l)(f_1'-\l f_1)}\}\in \G_2.
\end{equation}
If $0\in\cD$ then the formula \eqref{gamma1} for $\lambda=0$ takes the form
\begin{equation}\label{gamma10}
\sk\{{\begin{pmatrix} f_1\\f_1'
\end{pmatrix},(\gamma_2^\#)'(0)f_1}\}\in \G_2.
\end{equation}
\item
For every $\begin{pmatrix}
  f_2\\f_2'
\end{pmatrix}\in V_2$ and $\l\in\cD_e$ one has
\begin{equation}\label{gamma2}
\sk\{{\begin{pmatrix} f_2\\f_2'
\end{pmatrix},-\frac{1}{\l}\g_1^\#(\l)(f_1'-\l f_1)}\}\in \G_1.
\end{equation}
\end{enumerate}
\end{lemma}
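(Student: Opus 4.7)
The plan is to derive both identities from the Green's identity \eqref{E:1.1} of the unitary boundary pair, tested against the $\gamma$-field representations from \eqref{eq:WF2}--\eqref{eq:WF1}. In part (i), since $(f_1,f_1')\in V_1=\ker\Gamma_1$, there exists $u_2\in\sL_2$ with $\{(f_1,f_1'),(0,u_2)\}\in\Gamma$, and the task reduces to identifying this $u_2$. Part (ii) will then be treated by the symmetric argument with the roles of $\sL_1$, $\sL_2$, $V_1$, $V_2$ and $\gamma_1$, $\gamma_2$ interchanged.

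For (i), fix $\lambda\in\cD$, so that $\bar\lambda^{-1}\in\cD_e$ by \eqref{eq:DD_e}. For every $v\in\sL_2$, \eqref{eq:WF1} applied at $\bar\lambda^{-1}$ produces the test element
\[
\Bigl\{\bigl(\gamma_2(\bar\lambda^{-1})v,\,\bar\lambda^{-1}\gamma_2(\bar\lambda^{-1})v\bigr),\,\bigl(\Theta^{\#}(\bar\lambda^{-1})v,\,v\bigr)\Bigr\}\in\Gamma.
\]
I would substitute this pair together with $\{(f_1,f_1'),(0,u_2)\}$ into \eqref{E:1.1}. Using the conjugate-linearity of $[\cdot,\cdot]_\sH$ in its second slot (so that the factor $\bar\lambda^{-1}$ becomes $\lambda^{-1}$), the left-hand side collapses to
\[
[f_1-\lambda^{-1}f_1',\,\gamma_2(\bar\lambda^{-1})v]_\sH=-\lambda^{-1}\bigl(\gamma_2^{\#}(\lambda)(f_1'-\lambda f_1),v\bigr)_{\sL_2},
\]
where the second equality is the defining property \eqref{eq:gam_sharp} of $\gamma_2^{\#}$; the right-hand side equals $-(u_2,v)_{\sL_2}$ because the $\sL_1$-component of $(0,u_2)$ vanishes. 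Letting $v$ range over $\sL_2$ produces \eqref{gamma1}.

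The special case $\lambda=0$ is handled by splitting
\[
\lambda^{-1}\gamma_2^{\#}(\lambda)(f_1'-\lambda f_1)=\bigl(\lambda^{-1}\gamma_2^{\#}(\lambda)\bigr)f_1'-\gamma_2^{\#}(\lambda)f_1
\]
and passing to the limit: by the uniform limits recorded in \eqref{eq:g_inf2} both summands converge, which yields \eqref{gamma10}. For (ii), the same scheme applies with $(f_2,f_2')\in V_2=\ker\Gamma_2$ and representative $\{(f_2,f_2'),(u_1,0)\}\in\Gamma$, now tested against the element from \eqref{eq:WF2} at $\bar\lambda^{-1}\in\cD$ (where $\lambda\in\cD_e$, and $\gamma_1^{\#}(\lambda):=\gamma_1(\bar\lambda^{-1})^*$); the opposite sign in \eqref{gamma2} compared with \eqref{gamma1} traces back to the fact that the surviving right-hand term of \eqref{E:1.1} is now $+(u_1,v)_{\sL_1}$ instead of $-(u_2,v)_{\sL_2}$. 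The main bookkeeping hurdle is keeping track of the conjugate-linearities in $[\cdot,\cdot]_\sH$ alongside the definition \eqref{eq:gam_sharp}, so that the correct combination of $\lambda$ and $\bar\lambda$ survives in the final expression; once this is managed, each identity reduces to a one-line application of \eqref{E:1.1}.
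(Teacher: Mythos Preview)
Your argument is correct and follows the paper's proof essentially verbatim: both test the Green's identity \eqref{E:1.1} with the $V_1$-element $\{(f_1,f_1'),(0,v_2)\}\in\Gamma$ against the $\gamma$-field element at $1/\bar\lambda\in\cD_e$ coming from \eqref{eq:WF1}, and then identify $v_2$ by letting the test vector range over $\sL_2$ (the paper merely reverses the order of the two arguments in \eqref{E:1.1}, which is immaterial). One small remark: your limit computation for $\lambda\to 0$ correctly produces $(\gamma_2^{\#})'(0)f_1'$, not $(\gamma_2^{\#})'(0)f_1$; the occurrence of $f_1$ in \eqref{gamma10} appears to be a typo in the stated lemma, and your split makes this transparent.
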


\begin{proof}
(i) Using Definitions~\ref{def:gamma},~\ref{char} it is seen that
\begin{equation}\label{eq:4.1A}
  \sk\{{
  \begin{pmatrix}
    \g_2\sk({\frac{1}{\ov \l}})u_2 \\
    \frac{1}{\ov \l}\g_2\sk({\frac{1}{\ov \l}})u_2
  \end{pmatrix},
  \begin{pmatrix}
    \Theta\sk({\frac{1}{\ov \l}})^*u_2 \\
    u_2
  \end{pmatrix}
  }\},\quad
  \sk\{{
  \begin{pmatrix}
    f_1 \\
    f_1'
  \end{pmatrix},
  \begin{pmatrix}
   0 \\
    v_2
  \end{pmatrix}
  }\}\in \G,
\end{equation}
for all $u_2\in \sL_2$, and some $v_2\in \sL_2$, $\l\in \cD\setminus\{0\}$.
By applying the identity \eqref{E:1.1} to these elements one obtains
$$
\sk[{\g_2\sk({\frac{1}{\ov \l}})u_2,f_1}]_\sH-\frac{1}{\ov \l}\sk[{\g_2\sk({\frac{1}{\ov \l}})u_2,f_1'}]_\sH=-(u_2, v_2)_{\sL_2},
$$
or, equivalently,
$$
 \sk({u_2,\g_2\sk({\frac{1}{\ov \l}})^*\left(\frac{1}{\l}f_1'-f_1\right)})_{\sL_2}=(u_2, v_2)_{\sL_2}.
$$
Since $u_2\in\sL_2$ is arbitrary this implies the equality
$$
v_2=\frac{1}{\l}\g_2^\#\sk({\l})\sk({f_1'-\l f_1})
$$
which in combination with \eqref{eq:4.1A} yields \eqref{gamma1}.

The equality~\eqref{gamma10} is implied by \eqref{gamma1} and \eqref{eq:g_inf2}.

(ii) Similarly, applying \eqref{E:1.1} to the vectors
\begin{equation}\label{eq:4.2A}
  \sk\{{
  \begin{pmatrix}
    \g_1\sk({\frac{1}{\ov \l}})u_1 \\
    \frac{1}{\ov \l}\g_1\sk({\frac{1}{\ov \l}})u_1
  \end{pmatrix},
  \begin{pmatrix}
    u_1 \\
    \Theta\sk({\frac{1}{\ov \l}})u_1
  \end{pmatrix}
  }\},\,
  \sk\{{
  \begin{pmatrix}
    f_2 \\
    f_2'
  \end{pmatrix},
  \begin{pmatrix}
   v_1\\
    0
  \end{pmatrix}
  }\}\in \G,
\end{equation}
where $u_1, v_1 \in \sL_1$, $f_2,f_2'\in\sH$, $\l\in \cD_e$, one obtains
$$
\sk[{\g_1\sk({\frac{1}{\ov \l}})u_1,f_2}]_\sH-\frac{1}{\ov \l}\sk[{\g_1\sk({\frac{1}{\ov \l}})u_1,f_2'}]_\sH=(u_1, v_1)_{\sL_1}
$$
and
$$
\sk({u_1,\g_1^\#\sk({\l})\sk({f_2-\frac{1}{\l}f_2'})})_{\sL_1}=(u_1, v_1)_{\sL_1}.
$$
This implies
$$
v_1=-\frac{1}{\l}\g_1^\#\sk({\l})\sk({f_2'-\l f_2}),
$$
which together with \eqref{eq:4.2A} yields \eqref{gamma2}.
\end{proof}

\subsection{Weyl function and spectrum of proper extensions of $V$}
A unitary boundary pair $(\sL,\Gamma)$ is a tool which allows to determine those extensions $\wt V$ of $V$ that satisfy $V\subsetneq  \wt V\subsetneq  V_*$ in the following way.
Let $\Phi$ be a  linear relation from $\sL_1$ to $\sL_2$ represented in the form
\begin{equation}\label{eq:Phi}
  \Phi=\left\{\binom{\Phi_1h}{\Phi_2h}:\,h\in\cH\right\},
\end{equation}
where $\cH$ is an auxiliary Hilbert space and $\Phi_j$ are bounded linear operators $\Phi_j:\cH\to\sL_j$ $(j=1,2)$, such that
\begin{equation}\label{eq:Invert0}
\ker(\Phi_1^*\Phi_1+\Phi_2^*\Phi_2)=\{0\}.
\end{equation}
Then $\Phi$ is closed if and only if
\begin{equation}\label{eq:Invert}
 0\in\rho(\Phi_1^*\Phi_1+\Phi_2^*\Phi_2).
\end{equation}
Associate with $\Phi$ an extension $V_\Phi$ of $V$ by
\begin{equation}\label{eq:VPhi}
  V_\Phi= \left\{
  \begin{pmatrix}
    f \\
    f'
  \end{pmatrix}
  \in V_*:\left\{
  \begin{pmatrix}
    f \\
    f'
  \end{pmatrix},
  \begin{pmatrix}
    \Phi_1h \\
    \Phi_2 h
  \end{pmatrix}\right\}
  \in \G \text{ for some } h\in \cH
  \right\}.
\end{equation}
The following theorem gives a description of the spectrum of $V_\Phi$ and contains
a Kre\u{\i}n type resolvent formula.

\begin{theorem}\label{res1}
Let $V$ be a closed isometric operator in $\sH$,
let $\Pi=({\sL},\Gamma)$ be a
unitary boundary pair for $V$, let $\Phi\in \mathbf{B}(\cH,\sL)$ and let~\eqref{eq:Invert0} hold. If $\l\in\cD$ then:
\begin{enumerate}
\def\labelenumi{\rm (\roman{enumi})}
\item
$\l\in\s_p(V_\Phi) \Longrightarrow \ker(\Phi_2-\Theta(\l)\Phi_1)\ne\{0\}$;
\item
$\Phi_2-\Theta(\l)\Phi_1:\cH\to\sL_2$ has a bounded inverse
$\Longrightarrow \l\in\r(V_\Phi)$. \end{enumerate}
When (ii) is satisfied the resolvent of $V_\Phi$ takes the form
\begin{equation}\label{E:res0}
  (V_\Phi-\l I_{\sH})^{-1}=(V_1 - \l I_{\sH})^{-1}+\frac{1}{\l}\g_1(\l)\Phi_1(\Phi_2-\Theta(\l)\Phi_1)^{-1}\g_2^\#(\l)\quad (\l\in\cD).
\end{equation}
If $\l\in\cD_e$ then:
\begin{enumerate}
\item[(iii)]
$\l\in\s_p(V_\Phi) \Longrightarrow \ker(\Phi_1-\Theta^\#(\l)\Phi_2)\ne\{0\}$;
\item[(iv)]
$\Phi_1-\Theta^\#(\l)\Phi_2:\cH\to\sL_1$ has a bounded inverse
$ \Longrightarrow \l\in\r(V_\Phi)$; \end{enumerate}
When (iv) is satisfied the resolvent of $V_\Phi$ takes the form
\begin{equation}\label{E:res1}
  (V_\Phi-\l I_{\sH})^{-1}=(V_2-\l I_{\sH})^{-1}-\frac{1}{\l}\g_2(\l)\Phi_2(\Phi_1-\Theta^\#(\l)\Phi_2)^{-1}\g_1^\#(\l)\quad (\l\in\cD_e).
\end{equation}
If, in addition,  $\Pi=({\sL},\Gamma)$ is an ordinary
boundary triple for $V$ then the implications (i)--(iv) become equivalences.
\end{theorem}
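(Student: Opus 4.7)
The plan is to establish (i) and (ii) for $\lambda\in\cD$; items (iii) and (iv) then follow by a parallel construction using \eqref{eq:WF1} and Lemma~\ref{L:Ggf}(ii) in place of \eqref{eq:WF2} and Lemma~\ref{L:Ggf}(i). The equivalence statement in the ordinary boundary triple case will be addressed last.

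For (i), I will start from a nonzero $\widehat{f}_\lambda=\binom{f_\lambda}{\lambda f_\lambda}\in V_\Phi$. The definition \eqref{eq:VPhi} supplies $h\in\cH$ with $\{\widehat{f}_\lambda,\binom{\Phi_1 h}{\Phi_2 h}\}\in\Gamma$; since $\widehat{f}_\lambda\in\widehat{\sN}_\lambda(V_*)$, Definition~\ref{char} combined with the single-valuedness of $\Theta(\lambda)$ established in Theorem~\ref{thm:General} forces $\Phi_2 h=\Theta(\lambda)\Phi_1 h$, so $h\in\ker(\Phi_2-\Theta(\lambda)\Phi_1)$. To rule out $h=0$, I will argue that $h=0$ would give $f_\lambda=\gamma_1(\lambda)\Phi_1 h=\gamma_1(\lambda)\cdot 0=0$, contradicting $f_\lambda\neq 0$; single-valuedness of $\widehat{\gamma}_1(\lambda)$ on $\sL_1$ for $\lambda\in\cD$ is the only input used here.

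For (ii) the contrapositive of (i) gives injectivity $\ker(V_\Phi-\lambda I)=\{0\}$, so only the resolvent formula \eqref{E:res0} (and the resulting surjectivity) remains. Given $g\in\sH$, I will set $f_1:=(V_1-\lambda I)^{-1}g$, $f_1':=g+\lambda f_1$, so $\binom{f_1}{f_1'}\in V_1$, and invoke Lemma~\ref{L:Ggf}(i) to obtain $\{\binom{f_1}{f_1'},\binom{0}{\gamma_2^\#(\lambda)g/\lambda}\}\in\Gamma$ --- the proof of that lemma actually delivers this stronger form with the first coordinate exactly $0$, since \eqref{E:1.1} and $\ran\Gamma_2=\sL_2$ (Proposition~\ref{P:G1G2}) pin down the second coordinate uniquely. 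Next set $h:=(\Phi_2-\Theta(\lambda)\Phi_1)^{-1}\gamma_2^\#(\lambda)g\in\cH$, which is well defined by hypothesis. Scaling \eqref{eq:WF2} by $1/\lambda$ yields
\[
\{\binom{\gamma_1(\lambda)\Phi_1 h/\lambda}{\gamma_1(\lambda)\Phi_1 h},\binom{\Phi_1 h/\lambda}{\Theta(\lambda)\Phi_1 h/\lambda}\}\in\Gamma.
\]
Summing the two relations in $\Gamma$ and simplifying via $\Phi_2 h-\Theta(\lambda)\Phi_1 h=\gamma_2^\#(\lambda)g$ will produce $\{\binom{f}{g+\lambda f},\binom{\Phi_1(h/\lambda)}{\Phi_2(h/\lambda)}\}\in\Gamma$ with $f$ matching the right-hand side of \eqref{E:res0}; hence $\binom{f}{g+\lambda f}\in V_\Phi$ via the parameter $h/\lambda$, and boundedness of $(V_\Phi-\lambda I)^{-1}$ follows from the boundedness of each factor in \eqref{E:res0}. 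Items (iii) and (iv) will proceed identically with $V_2$, $\gamma_2$, $\gamma_1^\#$, $\Theta^\#$ replacing $V_1$, $\gamma_1$, $\gamma_2^\#$, $\Theta$; the minus sign in \eqref{E:res1} will arise from scaling the $\gamma$-field relation by $-1/\lambda$ to absorb the sign produced by Lemma~\ref{L:Ggf}(ii).

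For the equivalences in the ordinary boundary triple case, $\gamma_1(\lambda)$ and $\gamma_2(\lambda)$ are injective (Proposition~\ref{P:OrdinaryBpair}) and $\Gamma$ is a bijection onto $\sL_1\times\sL_2$. The converse of (i) then follows immediately: given a nonzero $h\in\ker(\Phi_2-\Theta(\lambda)\Phi_1)$, the nondegeneracy condition \eqref{eq:Invert0} forces $\Phi_1 h\neq 0$ (otherwise $\Phi_2 h=0$ too), so $\gamma_1(\lambda)\Phi_1 h\neq 0$ provides a nonzero eigenvector of $V_\Phi$; (iii) is symmetric. For the converses of (ii) and (iv), I will combine injectivity of $\Phi_2-\Theta(\lambda)\Phi_1$ (from the equivalence of (i)) with surjectivity onto $\sL_2$, the latter obtained by inverting $\Gamma$ on $\binom{0}{y}$ for arbitrary $y\in\sL_2$, decomposing the preimage along $V_1\hplus\widehat{\sN}_\lambda(V_*)$ (Proposition~\ref{P:GeneralBpair}(iii)), and reading off the required $h$; the closed graph theorem then yields boundedness of the inverse. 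The main delicacy I anticipate is the resolvent-formula step of (ii): the summation of $\Gamma$-relations has to deliver $\binom{\Phi_1 h'}{\Phi_2 h'}$ on the right-hand side with no leftover $\mul\Gamma_1$-component in the first slot, and this is precisely why the strengthened form of Lemma~\ref{L:Ggf}(i) --- first coordinate exactly $0$, not merely in $\mul\Gamma_1$ --- is indispensable for the choice $h'=h/\lambda$ to produce an element of $V_\Phi$.
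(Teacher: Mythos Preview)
Your arguments for (i)--(iv) and for the converse of (i) are essentially the paper's proof (with a cosmetically different normalisation of $h$ in step (ii)); the observation that the proof of Lemma~\ref{L:Ggf}(i) actually produces an element of $\Gamma$ with first boundary component equal to $0$ is exactly what the paper uses in~\eqref{eq:4.6}. One small omission: your construction in (ii) divides by $\lambda$ throughout and so does not cover $\lambda=0\in\cD$; the paper handles that point separately via~\eqref{gamma10} and the limit~\eqref{eq:g_inf2}.

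There is, however, a genuine gap in your plan for the \emph{converse} of (ii) (and by symmetry (iv)). You propose to obtain surjectivity of $\Phi_2-\Theta(\lambda)\Phi_1$ by ``inverting $\Gamma$ on $\binom{0}{y}$ and decomposing the preimage along $V_1\hplus\wh{\sN}_\lambda(V_*)$''. But any preimage of $\binom{0}{y}$ under $\Gamma$ already lies in $V_1=\ker\Gamma_1$, so that decomposition has trivial $\wh{\sN}_\lambda$-component and never involves $\Phi$ or $V_\Phi$; there is no $h$ to ``read off''. More seriously, the argument as written does not use the hypothesis $\lambda\in\rho(V_\Phi)$ at all, and without it surjectivity can genuinely fail: take $\Phi_1=0$ and $\Phi_2$ injective but not surjective, so that~\eqref{eq:Invert0} holds yet $\Phi_2-\Theta(\lambda)\Phi_1=\Phi_2$ has proper range. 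The paper's route is to use $\lambda\in\rho(V_\Phi)$ directly: given $g\in\sH$ one solves $f'-\lambda f=g$ both in $V_\Phi$ and in $V_1$, subtracts to land in $\wh{\sN}_\lambda(V_*)$, and reads off $(\Phi_2-\Theta(\lambda)\Phi_1)h=\tfrac{1}{\lambda}\gamma_2^\#(\lambda)g$; surjectivity then follows from $\ran\gamma_2^\#(\lambda)=\sL_2$ in the ordinary case. Your decomposition idea can be rescued, but only by decomposing along $V_\Phi\hplus\wh{\sN}_\lambda(V^{-[*]})$---which is where the assumption $\lambda\in\rho(V_\Phi)$ enters---rather than along $V_1\hplus\wh{\sN}_\lambda(V_*)$.
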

\begin{proof}
The proof is divided into steps.
\bigskip

{\bf 1.} {\it Verification of  (i).}
If $\l\in\cD\cap\s_p(V_\Phi)$, $\cD=\rho(V_1)$, then there is $f\in\sH\setminus\{0\}$ and $ h\in \cH$ such that
\begin{equation}\label{eq:4.3B}
  \left\{\begin{pmatrix}
    f \\
   \l f
  \end{pmatrix},
  \begin{pmatrix}
    \Phi_1h \\
    \Phi_2h
  \end{pmatrix}\right\}
  \in \Gamma.
\end{equation}
Since  $\lambda\in\cD$, one has $\lambda\not\in\sigma_p(V)$ and hence $h\ne 0$. By Definition~\ref{char} this means that
  \begin{equation}\label{eq:4.3D}
    \Theta(\l)\Phi_1h=\Phi_2h, \quad \lambda\in\cD.
  \end{equation}
Hence, $h\in \ker(\Phi_2-\Theta(\l)\Phi_1)$.

{\bf 2.} {\it Verification of (ii).}
First assume that $(\Phi_2-\Theta(\l)\Phi_1)$ has a bounded inverse with $\l\in\cD\setminus\{0\}$.
Let us find a solution
$\wh f=
\begin{pmatrix}
  f \\
  f'
\end{pmatrix}
\in V_\Phi$ of the equation
\begin{equation}\label{eq:4.4}
  f'-\l f=g
\end{equation}
for arbitrary $g\in \sH$. Since $\l\in\cD=\r(V_1)$, there are $f_1,f_1'\in\sH$, such that
\[
  f_1'-\l f_1=g\quad\textup{and}\quad \begin{pmatrix}
  f_1 \\
  f'_1
\end{pmatrix}\in V_1.
\]
Hence
\begin{equation}\label{eq:4.5B}
f_1=(V_1-\l I_{\sH})^{-1}g.
\end{equation}
By Lemma \ref{L:Ggf}
\begin{equation}\label{eq:4.6}
  \left\{
  \begin{pmatrix}
    f_1 \\
    f_1'
  \end{pmatrix},
   \begin{pmatrix}
   0\\
   u_2
   \end{pmatrix}\right\}\in\G,\;\textup{where}\quad
   u_2= \frac{1}{\l}\g^\#_2(\l)g=\frac{1}{\l}\g^\#_2(\l)(f_1'-\l f_1).
\end{equation}
Now choose $h=(\Phi_2-\Theta(\l)\Phi_1)^{-1}u_2$ and apply \eqref{eq:WF2} to get
\begin{equation}\label{eq:N_lambda}
  \left\{
  \begin{pmatrix}
 \g_2(\l)\Phi_1h \\
  \l\g_2(\l)\Phi_1h
\end{pmatrix},
   \begin{pmatrix}
 \Phi_1h \\
  \Theta(\lambda)\Phi_1h
\end{pmatrix}
\right\}\in\Gamma.
\end{equation}
Combining \eqref{eq:4.6} and \eqref{eq:N_lambda} one obtains
\begin{equation}\label{eq:4.6B}
  \left\{
  \begin{pmatrix}
    f_1 \\
    f_1'
  \end{pmatrix}
 +\begin{pmatrix}
 \g_1(\l)\Phi_1h \\
 \l \g_1(\l)\Phi_1h
\end{pmatrix} ,
    \begin{pmatrix}
    \Phi_1h \\
     u_2 +\Theta(\lambda)\Phi_1h
  \end{pmatrix}\right\}
  \in\G.
\end{equation}
Setting
\begin{equation}\label{eq:4.5}
\begin{pmatrix}
  f \\
  f'
\end{pmatrix}=
\begin{pmatrix}
  f_1 \\
  f'_1
\end{pmatrix}+
\begin{pmatrix}
 \g_1(\l))\Phi_1h \\
  \l\g_1(\l))\Phi_1h
\end{pmatrix}
\end{equation}
and using the equality
\[
    u_2+\Theta(\lambda)\Phi_1h=(I+\Theta(\lambda)\Phi_1(\Phi_2-\Theta(\l)\Phi_1)^{-1})u_2=\Phi_2h
\]
one obtains from~\eqref{eq:4.6B}
\begin{equation}\label{eq:4.3E}
  \left\{\begin{pmatrix}
   f \\
   f'
  \end{pmatrix},
  \begin{pmatrix}
    \Phi_1h \\
    \Phi_2h
 \end{pmatrix}\right\}
  \in \Gamma.
\end{equation}
Therefore, the equation \eqref{eq:4.4} has a solution
$\wh f=
\begin{pmatrix}
  f \\
  f'
\end{pmatrix}
\in V_\Phi$
and
\[
\begin{split}
   f & = f_1+ \g_1(\l)\Phi_1h \\
     & = (V_1-\l I_{\sH})^{-1}g + \frac{1}{\l}\g_1(\l)\Phi_1(\Phi_2-\Theta(\l)\Phi_1)^{-1} \g_2^\#(\l)g.
\end{split}
\]

Next assume that $0\in\cD$ and $0\in\r(\Phi_2-\Theta(0)\Phi_1)$.
Then by Lemma \ref{L:Ggf}
the equality~\eqref{eq:4.6} holds with $u_2=(\gamma_2^\#)'(0)f_1$ and now combining this analog of \eqref{eq:4.6} with \eqref{eq:N_lambda} yields~\eqref{eq:4.3E}.
The formula \eqref{E:res0} for $\lambda=0$ takes the form
\[
V_\Phi^{-1}= V_1^{-1} + \g_1(0)\Phi_1(\Phi_2-\Theta(0)\Phi_1)^{-1} (\g_2^\#)'(0)
\]
This completes the proof of the implication in (ii) and the formula \eqref{E:res0}.

{\bf 3.} {\it Verification of  (iii).} If $\l\in\s_p(V_\Phi)\cap\cD_e$ then there is $f\in\sH\backslash\{0\}$ and $h\in\cH$ such that~\eqref{eq:4.3B} holds.
Again, since $\lambda\in\cD$, one has $\lambda\not\in\sigma_p(V)$ and hence $h\ne 0$. By Definition~\ref{char} one gets
\[
    \Phi_1h-\Theta^\#(\l)\Phi_2h=0.
\]
Hence  $h\in\ker(\Phi_1-\Theta^\#(\l)\Phi_2)$.

{\bf 4.} {\it Verification of  (iv).} Assume that  $\Phi_1-\Theta^\#(\l)\Phi_2$ has a bounded inverse and  $\l\in\cD_e$.
Since $\l\in\cD_e=\r(V_2)$, there are $f_2,f_2'\in\sH$, such that
\[
  f_2'-\l f_2=g\quad\textup{and}\quad \begin{pmatrix}
  f_2 \\
  f'_2
\end{pmatrix}\in V_2.
\]
Hence
$
f_2=(V_2-\l I_{\sH})^{-1}g$.
By Lemma \ref{L:Ggf}
\begin{equation}\label{eq:4.6D}
  \left\{
  \begin{pmatrix}
    f_2 \\
    f_2'
  \end{pmatrix},
   \begin{pmatrix}
   u_1\\
   0
   \end{pmatrix}\right\}\in\G\quad\textup{with}\quad
   u_1= -\frac{1}{\l}\g^\#_1(\l)g.
\end{equation}
Now choose $h=(\Phi_1-\Theta^\#(\l)\Phi_2)^{-1}u_1$ and apply \eqref{eq:WF1} to get
\begin{equation}\label{eq:N_lambdaB}
  \left\{
  \begin{pmatrix}
 \g_2(\l)\Phi_2h \\
 \l \g_2(\l)\Phi_2h
\end{pmatrix},
   \begin{pmatrix}
   \Theta^\#(\lambda)\Phi_2h \\
   \Phi_2h
\end{pmatrix}
\right\}\in\Gamma.
\end{equation}
Combining \eqref{eq:4.6D} and \eqref{eq:N_lambdaB} one obtains
\begin{equation}\label{eq:4.6C}
  \left\{
  \begin{pmatrix}
    f \\
    f'
  \end{pmatrix},
    \begin{pmatrix}
    \Phi_1h \\
    \Phi_2h
  \end{pmatrix}\right\}
  \in\G, \;\textup{where}\quad
  \begin{pmatrix}
    f \\
    f'
  \end{pmatrix}=\begin{pmatrix}
  f_2 \\
  f'_2
\end{pmatrix}+
\begin{pmatrix}
 \g_2(\l)\Phi_2h \\
 \l \g_2(\l)\Phi_2h
\end{pmatrix}.
\end{equation}
Making use of \eqref{eq:4.6C}, \eqref{eq:4.6D}, and the above formulas for $h$ and $f_2$ one obtains
\[
\begin{split}
   f & = f_2+ \g_2(\l)\Phi_2h \\
     & = (V_2-\l I_{\sH})^{-1}g - \frac{1}{\l}\g_2(\l)\Phi_2(\Phi_1- \Theta^\#(\l)\Phi_2)^{-1} \g_1^\#(\l)g
\end{split}
\]
This proves \eqref{E:res1} and the implication in (iv).

{\bf 5.} {\it Verification of the reverse implication in (i) for the case of an ordinary
boundary triple $({\sL},\Gamma)$.}
Let $(\Phi_2-\Theta(\l)\Phi_1)h=0$  for some $h\in\cH\backslash\{0\}$. Then it follows from
\begin{equation}\label{eq:4.3C}
  \sk\{{
  \begin{pmatrix}
     \g_1(\l)\Phi_1h \\
    \l\g_1(\l)\Phi_1h
  \end{pmatrix},
  \begin{pmatrix}
    \Phi_1h \\
    \Theta(\l)\Phi_1h
  \end{pmatrix}
  }\}  \in \Gamma
\end{equation}
and \eqref{eq:4.3D} that  \eqref{eq:4.3B} holds with $f=\g_1(\l)\Phi_1h$.
Notice that $\Phi_1h\ne 0$ since otherwise  $\Phi_2h= 0$ by \eqref{eq:4.3D}, which contradicts to~\eqref{eq:Invert0}.
Therefore, $f\ne 0$ since $\ker \g_1(\l)=\{0\}$ for the ordinary
boundary triple $\Pi=({\sL},\Gamma)$; see Proposition \ref{P:OrdinaryBpair}. Thus $\l\in\s_p(V_\Phi)$.

{\bf 6.} {\it Verification of the reverse implication in (ii) for the case of an ordinary
boundary triple $({\sL},\Gamma)$.}
Let $\l\in\r(V_\Phi)$. By virtue of item {\bf 5} to prove the boundedness of the inverse $(\Phi_2-\Theta(\l)\Phi_1)^{-1}:\sL_2\to \cH$ it is enough to show that
\begin{equation}\label{ran}
  \ran(\Phi_2-\Theta(\l)\Phi_1)=\sL_2.
\end{equation}
By assumption $\l\in\r(V_\Phi)\cup\cD$ and hence for arbitrary $g\in\sH$ one can find vectors
$\begin{pmatrix}
  f\\
  f'
\end{pmatrix}\in V_\Phi$ and
$\begin{pmatrix}
  f_1\\
  f_1'
\end{pmatrix}\in V_1$
such that
\[
  f_1'-\l f_1=f-\l f'=g\quad (\l\in\r(V_\Phi)\cup\cD).
\]
Then \eqref{eq:4.6}--\eqref{eq:4.6B} hold for some $h\in\cH$ and, in particular,
$$
(\Phi_2-\Theta(\l)\Phi_1)h=u_2=\frac{1}{\l}\g_2^\#(\l)g.
$$
Since $g\in\sH$ is arbitrary and for an ordinary boundary triple $\ran \g_2^\#(\l)=\sL_2$, the claim \eqref{ran} is proved.
By Open Mapping Theorem the operator $T:=\Phi_2-\Theta(\l)\Phi_1:\cH\to \sL_2$ has a bounded inverse, since $T\in{\mathbf B}(\cH,\sL_2)$, $\ker T=\{0\}$ and $\ran T=\sL_2$.
\end{proof}

The operator function $ (I_{\sH}-z V_\Phi)^{-1}$ is called the \textit{coresolvent} of $V_\Phi$.
Setting $\l = 1/z$ in Theorem~\ref{res1} one obtains  the following statement for coresolvents of $V_\Phi$.
\begin{corollary}\label{cores1}
Let $V$ be a  closed isometric operator in $\sH$,
let $\Pi=({\sL},\Gamma)$ be a
unitary boundary pair for $V$, let $\Phi\in \mathbf{B}(\cH,\sL)$ and let~\eqref{eq:Invert0} hold. If $z\in\overline{\cD}$ then:
\begin{enumerate}
\def\labelenumi{\rm (\roman{enumi})}
\item
$z\in\s_p(V_\Phi^{-1}) \Longrightarrow \ker\left(\Phi_1-\Theta\left(\bar{z}\right)^*\Phi_2\right)\ne\{0\}$;
\item[(ii)]
$\Phi_1-\Theta(\bar{z})^*\Phi_2:\cH\to\sL_1$ has a bounded inverse
$ \Longrightarrow z\in\r(V_\Phi^{-1})$; \end{enumerate}
When (ii) is satisfied then the coresolvent of $V_\Phi$ takes the form
\begin{equation}\label{E:cores1}
  (I_{\sH}-z V_\Phi)^{-1}=(I_{\sH}-z V_2)^{-1}+\g_2\left(\frac{1}{z}\right)\Phi_2\left(\Phi_1-\Theta(\bar{z})^*\Phi_2\right)^{-1}\g_1(\bar{z})^*. 
\end{equation}
If $z\in\overline{\cD_e}$ then:
\begin{enumerate}
\item[(iii)]
$z\in\s_p(V_\Phi^{-1}) \Longrightarrow \ker\left(\Phi_2-\Theta\left({1}/{z}\right)\Phi_1\right)\ne\{0\}$;
\item
$\Phi_2-\Theta\left({1}/{z}\right)\Phi_1:\cH\to\sL_2$ has a bounded inverse
$\Longrightarrow z\in\r(V_\Phi^{-1})$. \end{enumerate}
When (iv) is satisfied then the coresolvent of $V_\Phi$ takes the form
\begin{equation}\label{E:cores2}
  (I_{\sH}-z V_\Phi)^{-1}=(I_{\sH} - z V_1)^{-1}-\g_1\left({1}/{z}\right)\Phi_1\left(\Phi_2-\Theta\left({1}/{z}\right)\Phi_1\right)^{-1}\g_2(\bar{z})^*.
\end{equation}
If, in addition,  $\Pi=({\sL},\Gamma)$ is an ordinary
boundary triple for $V$ then the implications (i)--(iv) become equivalences.
\end{corollary}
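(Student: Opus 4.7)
The plan is to deduce the corollary from Theorem \ref{res1} by the spectral substitution $\lambda=1/z$, which exchanges resolvent and coresolvent. First I would record the elementary identity: for $z\neq 0$ and $\lambda=1/z$,
\[
V_\Phi-\lambda I=-\frac{1}{z}(I-zV_\Phi),\qquad (V_\Phi-\lambda I)^{-1}=-z(I-zV_\Phi)^{-1},
\]
with analogous relations for $V_1$ and $V_2$ in place of $V_\Phi$. In particular $\ker(I-zV_\Phi)=\ker(V_\Phi-\lambda I)$, hence $z\in\sigma_p(V_\Phi^{-1})\Leftrightarrow 1/z\in\sigma_p(V_\Phi)$ and $z\in\rho(V_\Phi^{-1})\Leftrightarrow 1/z\in\rho(V_\Phi)$.

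Next I would convert the function symbols and domain parameters. By \eqref{eq:gam_sharp} and Theorem \ref{thm:General}, $\Theta^\#(\lambda)=\Theta(1/\bar\lambda)^*$ and $\gamma_j^\#(\lambda)=\gamma_j(1/\bar\lambda)^*$; setting $\lambda=1/z$ yields $\Theta^\#(1/z)=\Theta(\bar z)^*$ and $\gamma_j^\#(1/z)=\gamma_j(\bar z)^*$ for $j=1,2$. The domain conditions then translate cleanly via \eqref{eq:DD_e}: since $\cD_e=\{\mu\in\dD_e:1/\bar\mu\in\cD\}$, the conditions $1/z\in\cD_e$ and $z\in\overline{\cD}$ are equivalent, and symmetrically $1/z\in\cD\Leftrightarrow z\in\overline{\cD_e}$.

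With these ingredients in hand the proof becomes direct substitution. For $z\in\overline{\cD}$ I would set $\lambda=1/z\in\cD_e$ in formula \eqref{E:res1}; multiplying both sides by $-1/z$ and applying the above translations produces precisely \eqref{E:cores1}, while the implications (iii),(iv) of Theorem \ref{res1} transfer to (i),(ii) of the corollary by the spectral equivalences recorded above. The dual case $z\in\overline{\cD_e}$ is symmetric: substitute $\lambda=1/z\in\cD$ in \eqref{E:res0}, multiply by $-1/z$, and obtain \eqref{E:cores2} together with (iii),(iv). In the ordinary boundary triple case the equivalences of Theorem \ref{res1} pass through the substitution unchanged.

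I do not anticipate a substantive obstacle beyond careful bookkeeping. The main point to verify is that the operators $\Phi_1-\Theta(\bar z)^*\Phi_2$ and $\Phi_2-\Theta(1/z)\Phi_1$ appearing in the corollary are exactly the substitution-images of the operators $\Phi_1-\Theta^\#(\lambda)\Phi_2$ and $\Phi_2-\Theta(\lambda)\Phi_1$ from Theorem \ref{res1}, so that the invertibility hypotheses in the corollary coincide with those already used there, and both the spectral conclusions and the explicit formulas for the coresolvent follow without introducing any additional assumption.
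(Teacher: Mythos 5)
Your proposal is correct and coincides with the paper's own (very brief) argument: the corollary is obtained precisely by the substitution $\lambda=1/z$ in Theorem \ref{res1}, using $(I_{\sH}-zV_\Phi)^{-1}=-\tfrac{1}{z}(V_\Phi-\tfrac{1}{z}I)^{-1}$, the identities $\Theta^\#(1/z)=\Theta(\bar z)^*$, $\gamma_j^\#(1/z)=\gamma_j(\bar z)^*$, and the correspondence $z\in\overline{\cD}\Leftrightarrow 1/z\in\cD_e$ (resp. $z\in\overline{\cD_e}\Leftrightarrow 1/z\in\cD$) from \eqref{eq:DD_e}. The bookkeeping you outline is exactly what the paper intends, so no further comment is needed.
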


\begin{remark}\label{rem:4.3}
If  $(\sL_1\oplus\sL_2,\G_1,\G_2)$ is an ordinary boundary triple for $V$
then every closed proper extension of $V$ can be represented in the form~\eqref{eq:VPhi} with $\Phi_j\in\mathbf{B}(\cH,\sL_j)$ $(j=1,2)$ such that~\eqref{eq:Invert} holds due to \cite[Theorem 2.1]{B13}, \cite[Proposition~6.12]{DM17}.
Moreover, if  $\Phi$ is defined by \eqref{eq:Phi}, then the following equivalences hold:
\begin{enumerate}
  \item $V_\Phi$ is an isometric relation in $\cH$ $\Longleftrightarrow$ $\Phi$ is a graph of an isometric operator;
  \item $V_\Phi$ is a unitary relation in $\cH$ $\Longleftrightarrow$  $\Phi$ is  a graph of a unitary operator;
  \item $V_\Phi$ is a contractive relation in $\cH$ $\Longleftrightarrow$  $\Phi$ is  a graph of a contraction.
\end{enumerate}
The fact that the implications (i)--(iv) of Theorem~\ref{res1} become  equivalences for  an ordinary
boundary triple $\Pi=({\sL},\Gamma)=({\sL},\Gamma_1,\Gamma_2)$  was proved in~\cite{MM03} in the case when $\k=0$,
and in~\cite{B13}  in the case $\k\ne 0$.
\end{remark}

\subsection{Description of generalized coresolvents.}

\begin{definition}[see \cite{L71,KL72}]
An operator-valued function $\mathbf{K}_\l$ holomorphic
in a domain $\cO\subseteq  \dD$  with values in $\mathbf{B}(\sH)$ is called the
\textit{generalized coresolvent} of an isometric operator
$V:\sH\rightarrow\sH$, if there exist a Pontryagin
space $\wt{{\sH}}\supset{\sH}$ with negative index $\wt\kappa=\kappa_-(\wt\sH)$ and a unitary extension
$\wt V:\wt{\sH}\rightarrow \wt{\sH}$ of the operator
$V$ such that $\cO\subseteq \r(\wt{V}^{-1})$, and
\begin{equation}\label{minA}
\mathbf{K}_{z}=P_{{\sH}}\sk({I_{\wt\sH}-{z}\wt{V}})^{-1}\upharpoonright{\sH},
\quad{z}\in\cO,
\end{equation}
where $P_{{\sH}}$ is the orthogonal projection from $\wt{{\sH}}$ onto ${\sH}$.
Notice that in~\cite{KL72} the operator function $\mathbf{K}_{z}$ in~\eqref{minA} is called generalized resolvent of $V$.

\noindent
The representation~\eqref{minA} of the generalized coresolvent of 
$V$ is called \textit{minimal}, if
\[
    \wt{{\sH}}=\overline{\mbox{span}}\left\{\sH+(I_{\wt\sH}-{z}\wt{V})^{-1}\sH:\,\lambda\in\cO\right\}.
\]
The generalized coresolvent $\mathbf{K}_{z}$ is said to be $k$-\textit{regular}, if $k=\kappa_-(\wt\sH[-]\sH)$ for a minimal representation~\eqref{minA}.
\end{definition}
Every generalized coresolvent $\mathbf{K}_{z}$ of the isometric operator
$V$ admits a minimal representation~\eqref{minA} and every two minimal representations of $\mathbf{K}_{z}$ are unitarily equivalent, see~\cite[Proposition 4.1]{Der99} for the case of a symmetric operator.

\begin{lemma}\label{L:Ker}
  Let $\bK_{z}$ be a $(\wt\k-\k)$-regular generalized coresolvent. Then the kernel
  \begin{equation}\label{eq:may4a}
    {\sfR}_{w}({z}):=\frac{\bK_{z}+\bK_{w}^{[*]}-I}{1-{z} \ov{w}}-\bK_{w}^{[*]}\bK_{z}
  \end{equation}
  has $\wt\k-\k$ negative squares on $\r(\wt V^{-1})\cap \dD$.
\end{lemma}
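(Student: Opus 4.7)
The plan is to rewrite the kernel $\sfR_w(z)$ as a compressed product of resolvents of $\wt V$ and then interpret the matrix appearing in the definition of negative squares as the Gram matrix of a finite family in the complementary Pontryagin space $\sH':=\wt\sH[-]\sH$, whose negative index equals $\wt\k-\k$ by hypothesis.

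First I would set $R(z):=(I_{\wt\sH}-z\wt V)^{-1}$; since $\wt V$ is unitary in $\wt\sH$, for $z,w\in\rho(\wt V^{-1})$ both $R(z)$ and $R(w)^{[*]}=(I_{\wt\sH}-\ov w\,\wt V^{-1})^{-1}$ are well defined. A direct calculation multiplying out
\[
(I-z\wt V)\bigl(R(z)+R(w)^{[*]}-I\bigr)(I-\ov w\,\wt V^{-1}) = (1-z\ov w)I
\]
yields the resolvent-type identity
\[
\frac{R(z)+R(w)^{[*]}-I}{1-z\ov w}=R(w)^{[*]}R(z).
\]
Compressing to $\sH$ by $P_\sH$, and using $P_\sH R(z)\upharpoonright\sH=\bK_z$, $P_\sH R(w)^{[*]}\upharpoonright\sH=\bK_w^{[*]}$, I get
\[
\frac{\bK_z+\bK_w^{[*]}-I_\sH}{1-z\ov w}=P_\sH R(w)^{[*]}R(z)\upharpoonright\sH .
\]
Subtracting $\bK_w^{[*]}\bK_z = P_\sH R(w)^{[*]}P_\sH R(z)\upharpoonright\sH$ and writing $P':=I_{\wt\sH}-P_\sH$ for the orthogonal projection onto $\sH'$, I obtain the compact representation
\[
\sfR_w(z)=P_\sH R(w)^{[*]}\,P'\,R(z)\upharpoonright\sH .
\]

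Next, for any $w_1,\dots,w_n\in\rho(\wt V^{-1})\cap\dD$ and $f_1,\dots,f_n\in\sH$, I set $g_k:=P'R(w_k)f_k\in\sH'$. Using $[P_\sH x,f_i]_\sH=[x,f_i]_{\wt\sH}$ and $\sH'=\sH^{[\perp]}$ a direct computation gives
\[
[\sfR_{w_i}(w_j)f_j,f_i]_\sH=[P'R(w_j)f_j,P'R(w_i)f_i]_{\sH'}=[g_j,g_i]_{\sH'} .
\]
Thus the matrix entering the negative-squares test for $\sfR$ is precisely the Gram matrix of a finite family in $\sH'$. Since $\kappa_-(\sH')=\wt\k-\k$, every such Gram matrix has at most $\wt\k-\k$ negative eigenvalues, giving the upper bound. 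For the matching lower bound I would invoke minimality: the equality $\wt\sH=\overline{\spn}\{\sH+R(z)f:z\in\cO,\,f\in\sH\}$, after subtracting the $\sH$-part, is equivalent to density of $\{P'R(z)f:z\in\cO,\,f\in\sH\}$ in $\sH'$; approximating a basis of a maximal uniformly negative subspace of $\sH'$ (of dimension $\wt\k-\k$) by finite linear combinations $\sum_j c_{ij}P'R(w_{ij})f_{ij}$ and collecting all occurring vectors $P'R(w_{ij})f_{ij}$ as $g_1,\dots,g_N$, the span of the $g_k$'s contains a $(\wt\k-\k)$-dimensional negative subspace of $\sH'$, so the associated Gram matrix achieves exactly $\wt\k-\k$ negative eigenvalues.

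The operator identity and the Gram matrix reduction are routine. The main subtlety lies in the lower bound: one must use that a dense linear subspace of a Pontryagin space still contains a maximal uniformly negative subspace---a standard consequence of the continuity of the indefinite inner product together with the finite dimensionality of the negative part---so that density indeed forces the full negative index $\wt\k-\k$ to be realized by some finite Gram matrix built from vectors of the special form $P'R(w_k)f_k$.
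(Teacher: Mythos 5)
Your proof is correct and follows essentially the same route as the paper: both identify $[\sfR_{w_i}(w_j)f_j,f_i]_\sH$ with the Gram entries $[(I-P_\sH)R(w_j)f_j,(I-P_\sH)R(w_i)f_i]_{\wt\sH}$ in $\wt\sH[-]\sH$, bound the negative squares by $\kappa_-(\wt\sH[-]\sH)=\wt\k-\k$, and use minimality (density of the projected defect vectors) to show the bound is attained. The only difference is presentational: you package the key step as the operator identity $\sfR_w(z)=P_\sH R(w)^{[*]}(I-P_\sH)R(z)\upharpoonright\sH$, while the paper carries out the same computation in weak (sesquilinear) form directly on the quadratic form, which also covers the case when $\wt V$ is a unitary relation rather than an operator.
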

\begin{proof}
  Let $\{{z}_j\}_{j=1}^n$ be a set of points in $\r(\wt V^{-1})\cap\dD$ and let $g_j \in\sH$, $j=1,2,\dots,n$. Denote
  \begin{equation}\label{eq:may4b}
    \wt f_j:= (I_{\wt\sH}-{z}_j\wt V)^{-1}g_j,\quad
    f_j:= P_\sH \wt  f_j=\bK_{{z}_j}g_j,\quad(j=1,2,\dots,n).
  \end{equation}
  Then it follows form the first equality in \eqref{eq:may4b} that
  $$
  \begin{pmatrix}
    \wt f_j\\
    g_j
  \end{pmatrix}\in I_{\wt\sH}-{z}_j\wt V \quad\Longleftrightarrow\quad
  \begin{pmatrix}
    {z}_j\wt f_j\\
    \wt f_j-g_j
  \end{pmatrix}\in \wt V.
  $$
  Since $\wt V$ is a unitary relation in $\wt \sH$ one obtains
  $
    {{z}_j\ov{z}_k}[\wt f_j,\wt f_k]_{\wt\sH}=[\wt f_j-g_j,\wt f_k-g_k]_{\wt\sH}
  $
  or, equivalently,
  \begin{equation}\label{eq:may4d}
    (1-{z}_j\ov{z}_k)[\wt f_j,\wt f_k]_{\wt\sH}=[\wt f_j,g_k]_{\wt\sH}+[g_j,\wt f_k]_{\wt\sH}-[g_j,g_k]_\sH.
  \end{equation}
  It follows from \eqref{eq:may4a}, \eqref{eq:may4b}, and \eqref{eq:may4d} that
  \begin{equation}\label{eq:may4e}
    \begin{split}
      \sum_{j,k=1}^{n}[\sfR_{{z}_k}({z}_j)g_j&,g_k]_\sH\xi_j\ov\xi_k=
      -\sum_{j,k=1}^n[\bK_{{z}_j}g_j,\bK_{{z}_k}g_k]_\sH\xi_j\ov\xi_k
      \\
      +&\sum_{j,k=1}^n\frac{[\bK_{{z}_j}g_j,g_k]_\sH+[g_j,\bK_{{z}_k}g_k]_\sH
      -[g_j,g_k]_\sH}{1-{z}_j\ov{z}_k}{\xi_j\ov\xi_k}\\
      \overset{\eqref{eq:may4b}}{=}&
      \sum_{j,k=1}^n\left\{\frac{[\wt f_j,g_k]_{\wt\sH}+[g_j,\wt f_k]_{\wt\sH}-[g_j,g_k]_\sH}{1-{z}_j\ov{z}_k}
      -[f_j,f_k]_\sH\right\}\xi_j\ov\xi_k\\
      \overset{\eqref{eq:may4d}}{=}&\sum_{j,k=1}^n\left\{[\wt f_j,\wt f_k]_{\wt\sH}-[f_j,f_k]_{\sH}\right\}\xi_j\ov\xi_k\\
      =&\sum_{j,k=1}^n\left[(I-P_\sH)\wt f_j,(I-P_\sH)\wt f_k\right]_{\wt\sH}\xi_j\ov\xi_k.
    \end{split}
  \end{equation}
  This form has at most $\wt \k-\k$ negative squares, since $\ind(\wt\sH[-]\sH)=\wt \k-\k$. Because the representation \eqref{minA} is $(\wt \k-\k)$-regular the set
  $$
    \left\{(I_{\wt\sH}-P_\sH)(I_{\wt\sH}-{z}\wt V)^{-1}\sH:{z}\in\r(\wt V^{-1})\right\}
  $$
  is dense in $\wt \sH[-]\sH$ and hence it contains a $(\wt \k-\k)$-dimensional negative subspace. Therefore, the form \eqref{eq:may4e} has exactly $\wt \k-\k$ negative squares for an appropriate choice of ${z}_j$, $g_j$ $(j=1,2,\dots,n)$.
  \end{proof}

\begin{remark}
In the case of a standard isometric operator the statement of Lemma~\ref{L:Ker} was proved in \cite{DLS90}.
\end{remark}

\begin{theorem}\label{invprob}
Let $V:{\sH}\to {\sH}$ be an isometric operator, let $({\sL}_1\oplus{\sL}_2,\G_1,\G_2)$ be an ordinary boundary triple for $V$,
and let $\Theta(\cdot)$, $\g_1(\cdot)$, $\g_2(\cdot)$ be the corresponding Weyl function and the $\g$-fields.

Then for $ z\in \overline{\cD}\cap \r(\wt V^{-1})$ the formula
\begin{equation}\label{E:GKR}
   \mathbf{K}_z=(I_{\wt\sH}-z V_2)^{-1}+\g_2(1/z)\e(z)\sk({I_{\sL_1}-\Theta(\bar{z})^*\e(z)})^{-1}\g_1(\bar z)^*,
\end{equation}
establishes a one-to-one correspondence between the set of  $\wt \kappa-\kappa$-regular generalized coresolvents of $V$ and the set of all operator-valued functions $\e(\cdot)\in \cS_{\widetilde{\k}-\k}({\sL}_1,{\sL}_2)$, such that
\begin{equation}\label{E:regularityGKR}
    0\in\rho ({I_{\sL_1}-\Theta(\bar{z})^*\e(z)}).
\end{equation}

For $z\in\overline{\cD}_e\cap\r(\wt V^{-1})$ the formula~\eqref{E:GKR} takes the form
\begin{equation}\label{E:GKRe}
   \mathbf{K}_z=(I_{\wt\sH}-z V_1)^{-1}-\g_1(1/z) \varepsilon^T(1/z)\sk({I_{\sL_2}-\Theta(1/z)\e^T(1/z)})^{-1}\g_2(\bar z)^*.
  \end{equation}
\end{theorem}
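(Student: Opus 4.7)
\smallskip

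\textbf{Plan.} The approach is precisely the coupling method announced by the authors after Theorem~\ref{res1}. I realize the parameter $\varepsilon(\cdot)\in \cS_{\widetilde{\k}-\k}(\sL_1,\sL_2)$, by Theorem~\ref{thm:Inv}, as the Weyl function of a unitary boundary pair $(\sL_1\oplus\sL_2,\Gamma')$ for a simple isometric operator $V'$ in a Pontryagin space $\sH'$ with $\kappa_-(\sH')=\widetilde{\k}-\k$, with associated $\gamma$-fields $\gamma'_1,\gamma'_2$ and extensions $V'_1,V'_2$. I then couple $(\sL_1\oplus\sL_2,\Gamma_1,\Gamma_2)$ with $(\sL_1\oplus\sL_2,\Gamma')$ in the Pontryagin space $\widetilde{\sH}:=\sH\oplus\sH'$ (which has negative index $\widetilde{\k}$) by declaring
\[
 \widetilde{V}:=\left\{\left\{\begin{pmatrix} f\\ g\end{pmatrix},\begin{pmatrix} f'\\ g'\end{pmatrix}\right\}:\,
  \widehat{f}\in V_*,\;\widehat{g}\in\dom\Gamma',\;\Gamma_1\widehat{f}=\Gamma'_2\widehat{g},\;\Gamma_2\widehat{f}=\Gamma'_1\widehat{g}\right\}.
\]
The direct sum $\Gamma\oplus\Gamma'$ is a unitary relation from $(\widetilde{\sH}^2,J_{\widetilde{\sH}^2})$ to $(\sL\oplus\sL,J_\sL\oplus(-J_\sL))$; after the permutation of the two copies of $\sL$ the coupling condition above becomes $\Phi_1 h=\Phi_2 h=h$ with $\cH=\sL$ and $\Phi_1=\Phi_2=I$, i.e., $\widetilde{V}$ is exactly the extension $V_\Phi$ of $V\oplus V'$ corresponding to the parameter $\Phi=\{(h,h):h\in\sL\}$ in the coupled unitary boundary pair. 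Hence $\widetilde{V}$ is a unitary relation in $\widetilde{\sH}$ and $V\oplus V'\subset\widetilde{V}$, so in particular $V\subset\widetilde{V}$.

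Next I apply Corollary~\ref{cores1} to the coupled boundary pair to compute $(I_{\widetilde{\sH}}-z\widetilde{V})^{-1}$. The coupled Weyl function is block-diagonal $\Theta\oplus\varepsilon$ (with suitable permutation) and the coupled $\gamma$-fields are the direct sums of the corresponding ones; the invertibility condition arising from $\Phi_1-\Theta(\bar z)^*\Phi_2$ applied to this block structure becomes exactly $I_{\sL_1}-\Theta(\bar z)^*\varepsilon(z)$ being boundedly invertible, which matches \eqref{E:regularityGKR}. Sandwiching the resulting formula by $P_\sH$ and using that $\gamma'_j(\cdot)$ take values in $\sH'$ (so the pure ``primed'' summand in $(I_{\widetilde{\sH}}-zV_2\oplus V'_2)^{-1}$ is killed by $P_\sH$) yields, after a direct bookkeeping of the block matrix product, formula \eqref{E:GKR}. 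Formula \eqref{E:GKRe} is obtained identically from the second coresolvent formula \eqref{E:cores2}; alternatively one may invoke Proposition~\ref{prop:trans} and pass to the transposed boundary pair to convert the range $z\in\overline{\cD}$ into $z\in\overline{\cD}_e$.

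It remains to verify that this construction is bijective and yields a $(\widetilde{\k}-\k)$-regular coresolvent. Minimality and the index computation follow from simplicity of $V'$: by~\eqref{eq:span_D}--\eqref{eq:span_De} applied to the closely connected colligation realizing $\varepsilon$, $\sH'$ is spanned by the defect subspaces of $V'$, so that $\widetilde{\sH}=\mathrm{\overline{span}}\{\sH+(I_{\widetilde{\sH}}-z\widetilde V)^{-1}\sH\}$; combined with $\kappa_-(\sH'[-]\{0\})=\widetilde\k-\k$ this gives $\kappa_-(\widetilde\sH[-]\sH)=\widetilde\k-\k$. For the converse direction, take an arbitrary $(\widetilde{\k}-\k)$-regular coresolvent $\mathbf{K}_z$ with minimal unitary dilation $\widetilde{V}$ in $\widetilde{\sH}\supset\sH$. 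By Lemma~\ref{L:Ker} the kernel $\sfR_w(z)$ has exactly $\widetilde\k-\k$ negative squares; I then set
\[
 \varepsilon(z):=\gamma_2(1/\bar z)^{[*]}(\mathbf{K}_z-(I_{\widetilde{\sH}}-zV_2)^{-1})\gamma_1(\bar z)^{-[*]}\cdot\big(\text{a rearrangement}\big),
\]
solving \eqref{E:GKR} for $\varepsilon(z)$ by using that, under an ordinary boundary triple, $\gamma_1(\bar z)$ and $\gamma_2(1/z)$ are bounded with bounded left inverses on their ranges; the fact that $\varepsilon$ so defined lies in $\cS_{\widetilde{\k}-\k}(\sL_1,\sL_2)$ follows from the negative-square count of $\sfR_w(z)$ via the kernel identity \eqref{M2}--\eqref{M4} applied with the parameter $\varepsilon$, and unitary equivalence of minimal dilations (mentioned by the authors after the definition of minimal representation) then guarantees uniqueness.

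\textbf{Main obstacle.} The principal difficulty is the verification that the coupled relation $\widetilde{V}$ is genuinely unitary in $\widetilde{\sH}$ and that the coresolvent formula of Corollary~\ref{cores1}, which a priori is proved for the coupled boundary pair in $\widetilde{\sH}$, collapses to \eqref{E:GKR} after projection onto $\sH$ without spurious terms. Carrying out the block computation, and simultaneously tracking which $\gamma$-field maps into $\sH$ versus $\sH'$ so that projection by $P_\sH$ removes the correct summands, is the most delicate bookkeeping. The second subtle point is the bijectivity: showing that the map $\varepsilon\mapsto \mathbf{K}_z$ is injective reduces to the uniqueness of the minimal unitary dilation together with the simplicity of the realizing $V'$ in Theorem~\ref{thm:Inv}, while surjectivity requires the converse argument above with the negative-square count from Lemma~\ref{L:Ker}.
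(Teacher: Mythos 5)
Your first half is essentially the paper's own argument: realize $\e(\cdot)$ by Theorem~\ref{thm:Inv} as the Weyl function of a unitary boundary pair for a simple isometric operator in a Pontryagin space of negative index $\wt\k-\k$, couple it with the given ordinary boundary triple (the paper's $\wt\Gamma$ in \eqref{tildeBR}, with the coupling realized as $\wt V_\Phi$ for $\Phi_1=\Phi_2=I_{\sL_1\oplus\sL_2}$), apply Corollary~\ref{cores1} to the coupled pair, and compress to $\sH$; the block bookkeeping you flag is exactly what the paper carries out in \eqref{eq:5.2A}--\eqref{eq:5.8A}. So for the direction ``every admissible $\e$ gives a $(\wt\k-\k)$-regular coresolvent'' your plan is sound and coincides with the paper's Step~1.

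The converse direction, however, has a genuine gap. You propose to ``solve \eqref{E:GKR} for $\e(z)$'' by writing $\e(z)=\gamma_2(1/\bar z)^{[*]}\bigl(\mathbf{K}_z-(I-zV_2)^{-1}\bigr)\gamma_1(\bar z)^{-[*]}\cdot(\text{rearrangement})$. This is circular and, as written, ill-defined: you do not know a priori that $\mathbf{K}_z-(I_\sH-zV_2)^{-1}$ maps into $\ran\gamma_2(1/z)=\sN_{1/z}(V_*)$ and factors through $\gamma_1(\bar z)^*$ — that factorization is precisely what has to be established — and $\gamma_1(\bar z)^{-[*]}$ does not exist since $\gamma_1(\bar z)^*:\sH\to\sL_1$ is surjective but not injective (its kernel contains $\sH[-]\sN_{\bar z}(V_*)$). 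The missing idea is the Shtraus-type construction: from the minimal unitary dilation one first shows that
\[
 \widehat{\mathbf{K}}_z g:=\begin{pmatrix} z\,\mathbf{K}_z g\\ \mathbf{K}_z g-g\end{pmatrix}\in V^{-[*]}\quad (g\in\sH),
\]
which requires a short computation using the unitarity of $\wt V$ and $V\subset\wt V$; then the closed family $V_z:=\{\widehat{\mathbf{K}}_z g:\,g\in\sH\}$ is a proper extension of $V$ inside $V^{-[*]}$, it is parametrized through the boundary triple (via a fixed complement $\cN$ of $V$ in $V_2$) as $V_z=V_{\e(z)}$ with $\e(z)=\{(\Phi_1(z)h,\Phi_2(z)h):h\in\cN\}$, and the equivalences of Corollary~\ref{cores1} valid for \emph{ordinary} boundary triples then give both the invertibility condition \eqref{E:regularityGKR} (which your sketch never verifies) and the formula \eqref{E:GKR}. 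Only after this does the negative-square count enter: Lemma~\ref{L:Ker} together with the identity expressing $(\Gamma_1\widehat{\mathbf{K}}_{z_j}g_j,\Gamma_1\widehat{\mathbf{K}}_{z_k}g_k)_{\sL_1}-(\Gamma_2\cdot,\Gamma_2\cdot)_{\sL_2}$ through the kernel $\sfR_{z_k}(z_j)$ shows the parameter family has exactly $\wt\k-\k$ negative squares, and an auxiliary result (the paper cites \cite[Lemma~3.2]{DD09}) is needed to conclude that $\Phi_1(z)$ is invertible off a finite exceptional set and that $\e(z)=\Phi_2(z)\Phi_1(z)^{-1}\in\cS_{\wt\k-\k}(\sL_1,\sL_2)$, i.e.\ holomorphy of $\e$ is a conclusion, not an assumption. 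Without these steps the surjectivity half of the claimed one-to-one correspondence is not proved.
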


\begin{proof}
The proof is divided into steps.
\bigskip

{\bf 1.} {\it Verification  that for every $\e(\cdot)\in \mathcal{S}_{\widetilde{\kappa}-\k}({\sL}_1,{\sL}_2)$ satisfying~\eqref{E:regularityGKR} the formula~\eqref{E:GKR} determines a  $(\widetilde\k-\k)$-regular generalized coresolvent of $V$.}

By Theorem~\ref{thm:Inv} there exists a simple isometric operator $V^-$ in a Pontryagin space $\sH^-$ with negative index $\widetilde\k-\k$ and a unitary boundary pair $(\sL,\G^-)$ such that the corresponding Weyl function $\Theta^-({z})$ coincides with $\e({z})$ for ${z}\in\gh_\varepsilon=\r(V_1^-)$, where 
$V_1^-=\ker\Gamma_1^-$.

Next we construct a new unitary boundary pair $(\widetilde\sL,\widetilde\G)$ as the direct sum of the ordinary boundary triple $(\sL, \G^+):=(\sL,\G)$ and the unitary boundary pair $(\sL, \G^-)$ by the formulas
$$
\widetilde\sL=\widetilde\sL_1\oplus\widetilde\sL_2, \text{ where }\widetilde\sL_1=\widetilde\sL_2=\begin{pmatrix}
  \sL_1\\
  \sL_2
\end{pmatrix},
$$
\begin{equation}\label{tildeBR}
\widetilde \G=\sk\{{\sk\{{\begin{pmatrix}
  f_+\\
  f_-\\
  f_+'\\
  f'_-
\end{pmatrix},\begin{pmatrix}
  u_1^+\\
  u_2^-\\
  u_1^-\\
  u_2^+
\end{pmatrix}}\}:\left\{\begin{pmatrix}
  f_\pm\\
  f_\pm'
\end{pmatrix},\begin{pmatrix}
  u_1^\pm\\
  u_2^\pm
\end{pmatrix}\right\}\in \G^\pm}\}.
  \end{equation}
Let $\varepsilon({z})$, $\g_1^-({z})$, $\g_2^-({z})$ be the Weyl function and the $\g$-fields of the unitary boundary pair $(\sL,\G^-)$,
let $\varepsilon^T({z})$, $\g_1^{-,T}({z})$, $\g_2^{-,T}({z})$ be the Weyl function and the $\g$-fields of the transposed boundary pair $(\sL,(\G^-)^T)$ and let $\wt V_1=\ker\wt\Gamma_1$, $\wt V_2=\ker\wt\Gamma_2$.
Then the Weyl function $\widetilde\Theta({z})$ and the $\g$-fields $\widetilde\g_1({z})$, $\widetilde\g_2({z})$ of the unitary boundary pair $(\widetilde\sL,\widetilde\G)$ are given by
\begin{equation}\label{eq:5.2A}
   \widetilde \Theta({z})=\begin{pmatrix}
     0&\varepsilon^T({z})\\
     \Theta({z})&0
   \end{pmatrix}:
   \begin{pmatrix}
     {\sL}_1\\
     {\sL}_2
   \end{pmatrix}\to
   \begin{pmatrix}
     {\sL}_1\\
     {\sL}_2
   \end{pmatrix},\quad{z}\in\widetilde\cD=\r(\widetilde V_1)\cap\mathbb{D},
  \end{equation}

\begin{equation}\label{bgfields2}
   \widetilde \g_2({z})=\begin{pmatrix}
     0&\g_2^+({z})\\
     \g_2^{-,T}({z})&0
   \end{pmatrix}
   :\begin{pmatrix}
     {\sL}_1\\
     {\sL}_2
   \end{pmatrix}\to\begin{pmatrix}
     \sH\\
     {\sH}^-
   \end{pmatrix},\quad{z}\in\widetilde\cD=\r(\widetilde V_1)\cap\mathbb{D},
  \end{equation}
\begin{equation}\label{bgfields1}
   \widetilde \g_1({z})=\begin{pmatrix}
     \g_1^+({z})&0\\
      0&\g_1^{-,T}({z})
   \end{pmatrix}
    :\begin{pmatrix}
     {\sL}_1\\
     {\sL}_2
   \end{pmatrix}\to\begin{pmatrix}
     \sH\\
     {\sH}^-
   \end{pmatrix},\quad{z}\in\widetilde\cD_e:=\r(\widetilde V_2)\cap\mathbb{D}_e.
\end{equation}

Consider the extension $\widetilde V_{\Phi}$ of the operator
  $\widetilde V=\begin{pmatrix}
    V&0\\
    0&V^-
  \end{pmatrix}$
in the space $\widetilde\sH:=\sH^+\oplus\sH^-$, $\sH^+:=\sH$, corresponding to the linear relation $\Phi$ of the form~\eqref{eq:Phi}, where
\[
\Phi_1=\Phi_2={I}_{\sL_1\oplus\sL_2}.
\]
 In view of~\eqref{tildeBR} the extension $\widetilde V_{\Phi}$ takes the form
\[
\widetilde V_{\Phi}=\left\{\left\{\begin{pmatrix}
  f_+\\
  f_-\\
\end{pmatrix},\begin{pmatrix}
  f_+'\\
  f'_-
\end{pmatrix}\right\}:\left\{\begin{pmatrix}
  f_-\\
  f_-'
\end{pmatrix},\begin{pmatrix}
  \Gamma_2\wh f_+\\
  \Gamma_1\wh f_+
\end{pmatrix}\right\}\in \G^-,\,\wh f_+\in V^{-[*]}\right\}.
\]
Then $I_{\widetilde\sL_1}-\widetilde\Theta(\bar{z})^*=\begin{pmatrix}
      I_{\sL_1}&    -\Theta(\bar{z} )^*\\
      -\e(z)   &   I_{\sL_2}
    \end{pmatrix}$
and the assumption~\eqref{E:regularityGKR} yields
\begin{equation}\label{eq:may03}
  0\in\r(I_{\widetilde\sL_1}-\widetilde\Theta(\bar{z})^*)
\end{equation}
for $\bar z\in\widetilde\cD$.
By~\eqref{eq:may03} and Corollary~\ref{cores1} one obtains $z\in\r(\widetilde V_{\Phi}^{-1})$ and
\begin{equation}\label{eq:5.4A}
    (I_{\widetilde\sH}-{z} \widetilde V_{\Phi})^{-1}=(I_{\widetilde\sH}-{z}  \widetilde V_2)^{-1}+\widetilde \g_2(1/z)(I_{\widetilde\sL_1}-\widetilde\Theta(\bar z)^*)^{-1}\widetilde\g_1(\bar{z})^*.
\end{equation}
By using \eqref{bgfields1},  \eqref{bgfields2}, \eqref{eq:5.4A} and the formula (see \cite{Bernstein}[Prop. 2.8.7, p.108])
\begin{equation}\label{eq:5.4B}
   (I_{\widetilde\sL_1}-\widetilde\Theta(\bar{z})^*)^{-1}=
    \begin{pmatrix}
      I_{\sL_1}&    -\Theta(\bar{z} )^*\\
      -\e(z)   &   I_{\sL_2}
    \end{pmatrix}^{-1}=
    \begin{pmatrix}
      *
          & * \\
      \e({z})(I_{\sL_1}-\Theta(\bar{z} )^*\e(z))^{-1}& *
    \end{pmatrix}
\end{equation}
one obtains the equality
\begin{equation}\label{eq:5.5A}
  \begin{split}
    &(I_{\widetilde\sH}-z \widetilde V_{\Phi})^{-1}=(I_{\widetilde\sH}-z\widetilde V_2)^{-1}\\
    &+
    \begin{pmatrix}
      0&\g_2(z)\\
      \g^{-,T}_2(z)&0
    \end{pmatrix}
    \begin{pmatrix}
      *  & * \\
      \e({z})(I_{\sL_1}-\Theta(\bar{z} )^*\e(z))^{-1}   & * 
    \end{pmatrix}
    \begin{pmatrix}
      \g_1(\bar z)^*&0\\
      0&\g^{-,T}_1(\bar z)^*
    \end{pmatrix},
    \end{split}
\end{equation}
where  $*$ denotes blocks which are not used in further calculations.
Considering the compression of the formula~\eqref{eq:5.5A} to the subspace $\sH^+=\sH$ one arrives at~\eqref{E:GKR}.

Similarly, for $\bar z\in\widetilde\cD_e $
one obtains by~\eqref{eq:may03} and Corollary~\ref{cores1}
that  $z\in\r(\widetilde V_{\Phi}^{-1})$ and
\[
    (I_{\widetilde\sH}-z \widetilde V_{\Phi})^{-1}=(I_{\widetilde\sH}-{z} \widetilde V_1)^{-1}-\widetilde \g_1(1/{z})(I_{\widetilde\sL_1}-\widetilde\Theta(1/{z}))^{-1}\widetilde\g_2(\bar{z})^*.
\]
By \eqref{eq:5.4B}
\begin{equation}\label{eq:5.4C}
   \left(I_{\widetilde\sL_1}-\widetilde\Theta\left(\frac{1}{z}\right)\right)^{-1}=
    \begin{pmatrix}
      * & \omega_{12}(z)\\
      * & *
    \end{pmatrix},
    \quad 
\end{equation}
where    $\omega_{12}(z)=\varepsilon^T(1/z)(I_{\sL_2}-\Theta(1/z)\varepsilon^T(1/z))^{-1}$. Using \eqref{bgfields1},  \eqref{bgfields2} and \eqref{eq:5.4C} one gets
\begin{equation}\label{eq:5.8A}
  \begin{split}
    (I_{\widetilde\sH}-&{z} \widetilde V_{\Phi})^{-1}=(I_{\widetilde\sH}-{z}\widetilde V_1)^{-1}\\
    &-
    \begin{pmatrix}
      \g_1(1/z)&0\\
      0&\g^{-,T}_1(1/z)
    \end{pmatrix}
    \begin{pmatrix}
      * & \omega_{12}(z)\\
      * & *
    \end{pmatrix}
    \begin{pmatrix}
      0&\g^{-,T}_2(\bar z)^*\\
      \g_2(\bar z)^*&0
    \end{pmatrix}.
    \end{split}
\end{equation}
The compression of the formula~\eqref{eq:5.8A} to the subspace $\sH^+$ gives the equality~\eqref{E:GKRe}.

{\bf 2.}  {\it Verification  that  every $(\widetilde\k-\k)$-regular generalized coresolvent of $V$
admits the representation~\eqref{E:GKR}, where  $\e(\cdot)\in \cS_{\widetilde\k-\k}({\sL}_1,{\sL}_2)$ and~\eqref{E:regularityGKR} holds.}

First observe that for ${z}\in \r(\widetilde V^{-1})$ and $g\in\sH$ the following relations hold:
  \begin{equation}\label{eq:may3b}
  \begin{pmatrix}
    {z}(I_{\wt\sH}-{z} \widetilde V)^{-1} g\\
    (I_{\wt\sH}-{z} \widetilde V)^{-1}g-g
  \end{pmatrix}\in \widetilde V,\quad
    \widehat{ \mathbf{K}}_{z} g:=\begin{pmatrix}
    {z}\mathbf{K}_{z} g\\
    \mathbf{K}_{z} g-g
  \end{pmatrix}\in V^{-[*]}.
  \end{equation}
Indeed, the first relation in~\eqref{eq:may3b} is self-evident and
hence for every $h\in\dom V$ one also has the equality
\begin{equation}\label{eq:may3c}
      [{z}(I_{\wt\sH}-{z} \widetilde V)^{-1} g,h]_\sH=[ -g+(I_{\wt\sH}-{z} \widetilde V)^{-1} g,Vh]_\sH.
\end{equation}
On the other hand,
\[
    \begin{split}
      [{z}\bK_{z} g,h]_\sH&+[(I_{\sH}-\bK_{z})g,V h]_\sH=
      [\bK_{z} g,(\ov{z} I_{\sH}-V)h]_\sH+[g,Vh]_\sH\\
      &=
     \left[(I_{\wt\sH}-{z} \widetilde V)^{-1}g,(\ov{z} I_{\sH}-V)h\right]_\sH+[g,Vh]_\sH\\
   &=[{z}(I_{\wt\sH}-{z} \widetilde V)^{-1}g,h]_\sH
     -[(I_{\wt\sH}-{z} \widetilde V)^{-1}g,Vh]_\sH+[g,Vh]_\sH=0,
    \end{split}
\]
and here the last equality follows from \eqref{eq:may3c}.
This proves the second relation in~\eqref{eq:may3b}.

Next consider the linear relation
  \begin{equation}\label{eq:may3d}
V_{z}:=\left\{\widehat{ \mathbf{K}}_{z} g: \,g\in\sH\right\},\quad {z}\in\rho(\widetilde V^{-1})
  \end{equation}
in $\sH$. The linear relation $V_{z}$  is closed, since the assumptions
\[
{z} \mathbf{K}_{z} g_n\to f,\quad \mathbf{K}_{z} g_n-g_n\to f'\quad(n\to\infty)
\]
imply $g_n\to g:=\frac{1}{{z} }(f-{z} f')$ $({z} \ne 0)$ and hence
\[
\begin{pmatrix}
    f\\
    f'
  \end{pmatrix}=
    \begin{pmatrix}
    {z}\mathbf{K}_{z} g\\
    \mathbf{K}_{z} g-g
  \end{pmatrix}\in V_{z}.
\]

In order to construct a parametric representation of the proper extension $V_{z}$ let us introduce a closed subspace $\cN$ of $V_2$ such that
\[
V_2=V\dotplus \cN.
\]
Since $V^{-[*]}=V_1+V_2$, this implies
  \begin{equation}\label{eq:may3e}
V^{-[*]}=V_1  \dotplus\cN.
\end{equation}
Let $P_1$ and $P_2$ be projections onto the 1-st and the 2-nd components in $\sH^2$ and let $\Phi_j({z})$  be operator functions with values in $\cB(\cN,\sL_j)$ defined by
\[
\Phi_j({z})h=\Gamma_j\widehat{ \mathbf{K}}_{z} (P_1 h-{z} P_2h),\quad h\in\cN,\quad {z}\in\rho(\widetilde V^{-1}),\quad j=1,2.
\]
The values of $\Phi_j({z})$  belong to $\cB(\cN,\sL_j)$ due to formula \eqref{eq:may3b}.
Moreover, $\Phi_1({z})$ and $\Phi_2({z})$ satisfy~\eqref{eq:Invert0}, since the assumption
$\Phi_1({z})h=\Phi_2({z})h=0$ implies $h\in V\cap\cN=\{0\}$.

Now introduce the linear relation
\begin{equation}\label{eq:may3f}
\varepsilon({z}):=\left\{\binom{\Phi_1({z})h}{\Phi_2({z})h}:\,h\in\cN\right\}
\quad {z}\in\rho(\widetilde V^{-1}).
\end{equation}
Since $\ran(I_{\sH}-{z} V_2)=\sH$ for all ${z}\in\cD$ it follows from~\eqref{eq:may3d} and~\eqref{eq:may3f} that the linear relations $ V_{z}$ and $\varepsilon({z})$ are connected via~\eqref{eq:VPhi} and hence
\begin{equation}\label{eq:may9}
 V_{z}=V_{\varepsilon({z})}, \quad {z}\in \r(\widetilde V^{-1}).
\end{equation}
Since $(\sL,\G)$ is an ordinary boundary triple one concludes that $\varepsilon({z})$ is closed in $\sL_1\oplus\sL_2$,
and hence by Remark~\ref{rem:4.3} $\Phi_1({z})$ and $\Phi_2({z})$ satisfy~\eqref{eq:Invert};
cf.~\cite[Theorem 2.1]{B13}. Using \eqref{eq:may3b} and \eqref{eq:may9} one obtains
  \begin{equation}\label{eq:may3g}
\mathbf{K}_{z} g=(I_{\sH}-{z} V_{\varepsilon({z})})^{-1} g,\quad g\in\sH.
\end{equation}
Therefore ${z}\in\rho(V_{\varepsilon({z})}^{-1})$ for all
${z}\in\rho(\widetilde V^{-1})$ and since $(\sL,\G)$ is an ordinary boundary triple
Corollary~\ref{cores1} shows that $0\in\rho(I_{\sL_1}-\Theta({z})\e({z}))$ and that the following formula holds for ${z}\in\rho(\widetilde V^{-1})\cap \overline{\cD}$:
\begin{equation}\label{E:res0B}
  (I_{\sH}-{z} V_{\varepsilon({z})})^{-1}=(I_{\sH}-{z} V_2)^{-1}+\g_2(1/{z})\Phi_2({z})(\Phi_1({z})-\Theta(\bar{z})^*\Phi_2({z}))^{-1}\g_1(\bar{z})^*.
\end{equation}

It remains to show that $\e(\cdot)\in \cS_{\widetilde\k-\k}({\sL}_1,{\sL}_2)$.
For a choice of ${z}_j\in\rho(\widetilde V^{-1})\cap\dD$ and $g_j\in\sH$ denote
  \begin{equation}\label{eq:may3h}
\wh f_{j}=\binom{f_j}{f_j'}:=\widehat{ \mathbf{K}}_{{z}_j} g_j,\quad j=1,\dots,n,
\end{equation}
and let
  \begin{equation}\label{eq:may3i}
\Gamma\wh f_{j}=\binom{\Phi_1({z}_j)h_j}{\Phi_2({z}_j)h_j},\quad h_j\in\cN,\quad j=1,\dots,n.
\end{equation}
Then it follows from~\eqref{eq:may3h}, \eqref{eq:may3i} and~\eqref{E:1.1} that with $\xi_j\in\dC$,
  \begin{equation}\label{eq:may3j}
    \begin{split}
    \sum_{j,k=1}^n a_{j,k}\xi_j\overline{\xi_k}
     &:=\sum_{j,k=1}^n\frac{(\Phi_1({z}_j)h_j,\Phi_1({z}_k)h_k)_{\sL_1}-
     (\Phi_2({z}_j)h_j,\Phi_2({z}_k)h_k)_{\sL_2}}{1-{z}_j\overline{{z}}_k}
     {\xi_j\overline{\xi_k}}\\
      &=\sum_{j,k=1}^n\frac{(\Gamma_1\wh f_{j},\Gamma_1\wh f_{k})_{\sL_1}-
     (\Gamma_2\wh f_{j},\Gamma_2\wh f_{k})_{\sL_2}}{1-{z}_j\overline{{z}}_k}
     {\xi_j\overline{\xi_k}}\\
           &=\sum_{j,k=1}^n\left\{[ f_{j}, f_{k}]_{\sH}-
    [ f_{j}', f_{k}']_{\sH}\right\}
     \frac{\xi_j\overline{\xi_k}}{1-{z}_j\overline{{z}}_k}.
    \end{split}
  \end{equation}
Since
\[
    \begin{split}
[ f_{j}, f_{k}]_{\sH}&-
    [ f_{j}', f_{k}']_{\sH}
    ={{z}_j\ov{z}_k}[\bK_{{z}_j}g_j,\bK_{{z}_k}g_k]_\sH-[\bK_{{z}_j}g_j-g_j,\bK_{{z}_k}g_k-g_k]_\sH\\
    &=({z}_j\ov{z}_k-1)[\bK_{{z}_j}g_j,\bK_{{z}_k}g_k]_\sH
    +[\bK_{{z}_j}g_j,g_k]_\sH+[g_j,\bK_{{z}_k}g_k]_\sH-[g_j,g_k]_\sH\\
    &=({1-{z}_j\ov{z}_k})[\sfR_{{z}_k}({z}_j)g_j,g_k]_\sH
    \end{split}
\]
the form in \eqref{eq:may3j} is reduced to
$$
    \sum_{j,k=1}^n a_{j,k}\xi_j\overline{\xi_k}=\sum_{j,k=1}^n [\sfR_{{z}_k}({z}_j)g_j,g_k]_\sH{\xi_j\overline{\xi_k}}.
$$
By Lemma~\ref{L:Ker} the form \eqref{eq:may3j} has at most $\widetilde\k-\k$ and for some choice of ${z}_j$, $g_j$, $j=1,\dots,n$,
exactly $\widetilde\k-\k$ negative squares.

In view of Lemma~3.2 in~\cite{DD09}
this implies that the operator $\Phi_1({z})$ is invertible for all ${z}\in\rho(\widetilde V^{-1})\cap\dD$ except $\widetilde\k-\k$ points and
\begin{equation}\label{eq:may4g}
    \e({z})=\Phi_2({z})\Phi_1({z})^{-1}\in \cS_{\widetilde
    \k-\k}(\sL_1,\sL_2).
\end{equation}
In view of \eqref{eq:may4g} the formula \eqref{E:res0B} can be rewritten as \eqref{E:GKR}.
\end{proof}
\begin{remark}
1) For a standard isometric operator in a Pontryagin (resp. Kre\u{\i}n)
  space similar formulas for generalized coresolvents  were found in \cite{L71,KL72,LS74} (resp.~\cite{DLS90}). For the case of a nonstandard isometric operator in a Pontryagin space see~
  \cite{N002}.
  An elegant proof of the formula for generalized resolvents of a nonstandard Pontryagin space symmetric operator with deficiency index  (1,1) given by H. de Snoo was presented in~\cite{KW98}.
  In \cite{B13} a description of regular generalized resolvents of a nonstandard Pontryagin space isometric operator was given by the method of boundary triples.
For a Hilbert space isometric operator this method was developed earlier in \cite{MM03} and applied to the proof of Kre\u{\i}n type resolvent formulas \eqref{E:GKR}, \eqref{E:GKRe}.

2) The extension $V_{\varepsilon({z})}$ appearing in~\eqref{eq:may9} is an analog of Shtraus extension, which was introduced in~\cite{Str} for the case of a symmetric operator. In view of \eqref{eq:may3g} the vector function $f_{z}={\bf K}_{z} g$ can be treated as a solution of the following ''abstract boundary value problem'' with ${z}$-dependent boundary conditions
      \[
      \wh f_{z}:=\binom{{z} f_{z}}{f_{z}-g}\in V^{-[*]},\quad \Gamma_2 \wh f_{z}=\varepsilon({z})\Gamma_1 \wh f_{z}.
      \]

3) In abstract interpolation problem considered in~\cite{KKhYu87} the crucial role was played by the Arov-Grossman formula for scattering matrices of unitary extensions of   isometric operators, \cite{ArGr83}. In \cite{B13b} the formula for generalized coresolvents was applied to the description of scattering matrices of unitary extensions of   Pontryagin space isometric operators which, in turn, was used in \cite{B14} for parametrization of solutions of an indefinite abstract interpolation problem, see also~\cite{D2001}. The present version of formula \eqref{E:GKR} will allow to consider $k$-regular  indefinite interpolation problems with the growth of index $\kappa$.
\end{remark}


\end{document}